
\documentclass[12pt]{amsart}
\usepackage{amsmath}

\normalsize
\usepackage{amssymb,amsmath,amsthm,epsfig}

\setlength{\textwidth}{6.5truein}
\setlength{\textheight}{9.8truein}
\setlength{\oddsidemargin}{-0.0in}
\setlength{\evensidemargin}{-0.0in}
\setlength{\topmargin}{-0.6truein}

\newcommand{\lbl}[1]{\label{#1}}

\newtheorem{theo}{Theorem}[section]
\newtheorem{lem}[theo]{Lemma}
\newtheorem{remark}[theo]{Remark}
\newtheorem{definition[theo]}{Definition}

\newcommand\ee{\end{equation}}
\newcommand\bes{\begin{eqnarray}}
\newcommand\ees{\end{eqnarray}}
\newcommand\bess{\begin{eqnarray*}}
\newcommand\eess{\end{eqnarray*}}

\newcommand\bR{{\mathbb R}}

\begin{document}
\setlength{\baselineskip}{16pt} \pagestyle{myheadings}

\title[Spreading in a shifting environment ]
{Spreading in a shifting environment modeled by the diffusive logistic equation with a free boundary}
\thanks{}
\author[ Y. Du, L. Wei and L. Zhou]{Yihong Du$^\dag$,   Lei Wei$^\ddag$ and Ling Zhou$^\S$\;}
\thanks{$^\dag$School of Science and Technology,
University of New England, Armidale, NSW 2351, Australia.\\ Email:
ydu@\allowbreak turing.\allowbreak une.\allowbreak edu.\allowbreak
au.}
\thanks{$^\ddag$School of Mathematics and Statistics, Jiangsu Normal University, Xuzhou 221116,  China.\\
 Email: wlxznu@163.com.}
\thanks{$^\S$School of Mathematical Science, Yangzhou University, Yangzhou 225002, China.\\ Email: zhoul@yzu.edu.cn.}
\thanks{Y. Du was supported by the Australian Research Council,  L. Wei was supported by NSFC (11271167), and L. Zhou was supported by NSFC (11401515). }

\keywords{diffusive logistic equation, free boundary, spreading, invasive population, shifting environment}

\subjclass{ 35K20, 35R35, 35J60, 92B05}
\date{\today}

\begin{abstract} We investigate the influence of a shifting environment on the spreading of an invasive species through a model given by the diffusive logistic equation with a free boundary. When the environment is homogeneous and favourable, this model was first studied in Du and Lin \cite{DL}, where a spreading-vanishing dichotomy was established for the long-time dynamics of the species, and when spreading happens, it was shown that the species invades the new territory at some uniquely determined asymptotic speed $c_0>0$. Here we consider the situation that part of such an environment becomes unfavourable, and the unfavourable range of the environment moves into the favourable part  with speed $c>0$. We prove that when $c\geq c_0$, the  species always dies out in the long-run, but when $0<c<c_0$, the long-time behavior of the species is determined by a trichotomy described by
 (a) {\it vanishing}, (b) {\it borderline spreading}, or (c) {\it spreading}. 
If the initial population is writen in the form $u_0(x)=\sigma \phi(x)$ with $\phi$ fixed and $\sigma>0$ a parameter, then there exists $\sigma_0>0$ such that vanishing happens when $\sigma\in (0,\sigma_0)$, borderline spreading happens when $\sigma=\sigma_0$, and spreading happens when $\sigma>\sigma_0$.

\end{abstract}
\maketitle

\section{Introduction}
\setcounter{equation}{0}

The effect of climate change on the survival  of ecological  species has attracted a great deal of attention in recent years; see, for example, \cite{BDNZ, BR1, BR2, BN, LBSF}  and the references therein. To gain insights to this problem, some  simple mathematical models have been proposed and analyzed. One such model is  given by the Cauchy problem
\begin{equation}
\label{Cauchy}
u_t=du_{xx}+f(x-ct, u),\; x\in\bR^1, t>0;\; u(0,x)=u_0(x),\; x\in\bR^1,
\end{equation}
where $u(t,x)$ stands for the population density of the concerned species at time $t$ and spatial position $x$, with initial population $u_0(x)$. The climate change is incorperated in the function $f(x-ct,u)$, which describes  a changing environment  
that shifts with a certain speed $c>0$. In \cite{BDNZ, BR1, BR2, BN}, the situation where a shifting environment
with a favourable habitat range surrounded by unfavorable ones is investigated, and many interesting results are obtained, including useful criteria for long-time survival of the species.

In \cite{LBSF}, the influence of climate change on the spreading of an invasive species is studied, where  the problem is modeled  by \eqref{Cauchy} with a logistic type nonlinear term
\[
f(x-ct, u)=r(x-ct)u-u^2,
\]
and to represent a shifting environment, $r(\xi)$ is assumed to be a continuous increasing function with $r(\pm\infty)$ finite and $r(-\infty)<0<r(+\infty)$. So there is
a  $\xi_0\in\bR^1$ such that $r(\xi)\leq 0$ for $\xi<\xi_0$ and $r(\xi)>0$ for $\xi>\xi_0$, indicating that
the shifting range $\Omega_t^-:=\{x\in\bR^1:x<\xi_0+ct\}$  is unfavourable to the species, while  $\Omega_t^+:=\{x\in\bR^1:x>\xi_0+ct\}$ is favourable. 
The main result in \cite{LBSF} states that, if the environment shifting speed $c$ is strictly greater than $c^*:=2\sqrt{dr(+\infty)}$,
then the species will die out in the long run, while in the case $0<c<c^*$,  the species will survive and spread into new territory in the direction of the moving  environment with asymptotic speed $c^*$. More precisely, in the case $0<c<c^*$,  for any given small $\epsilon>0$,
\[
\lim_{t\to\infty}\left[\sup_{x\leq (c-\epsilon)t}u(t,x)\right]=0,\; \lim_{t\to\infty}\left[\sup_{x\geq (c^*+\epsilon)t}u(t,x)\right]=0,
\]
\[
\lim_{t\to\infty} \left[\sup_{(c+\epsilon)t\leq x\leq (c^*-\epsilon)t}|u(t,x)-r(+\infty)|\right]=0.
\]
Therefore, in the case $c<c^*$, the species will survive only inside the shifting range $S_t:=\{x\in\bR^1: ct<x<c^*t\}$ for large $t$.

In this paper we look at a similar problem to \cite{LBSF}, but use a free boundary to describe the spreading front of the species. As a matter of fact, we started working on the problem independently of \cite{LBSF}, and learned of \cite{LBSF} only after the first draft of our paper has been completed. It is a pleasing surprise to us that the nonlinear term in our model almost coincides with that used in \cite{LBSF}, which made the results arising from the two related models  readily comparable (see below, in particular Remark \ref{rm1.3} (ii)).

We now describe our model precisely.
Let $c>0$ be as before. We assume that $A(\xi)$ is a
Liptschitz continuous function on $\bR^1$ satisfying
 \begin{equation}\label{a}
 A(\xi)=
 \begin{cases}
 a_0, & \xi<-l_0,\\
 a, & \xi\geq 0,
 \end{cases}
\end{equation}
and $A(\xi)$ is strictly increasing over $[-l_0,0]$. Here $l_0$, $a_0$ and $a$ are constants, with $l_0>0$, $a_0\leq 0$ and $a>0$.

Our model is given by the following free boundary problem
\bes\left\{\begin{array}{ll}\medskip
\displaystyle u_t=du_{xx}+A(x-ct)u-bu^2,\ \ & t>0,\ \ 0<x<h(t),\\
\medskip\displaystyle u_x(t,0)=u(t,h(t))=0,\ \ & t>0,\\
\medskip\displaystyle h'(t)= -\mu u_x(t,h(t)),\ \ & t>0,\\
\displaystyle h(0)=h_0,\; u(0,x)=u_0(x),\ \ & 0\leq x\leq h_0,
\end{array}\right.\lbl{MP}
\ees
where $x=h(t)$ is the moving boundary to be determined, 
$h_0,\,\mu,d, b$ are positive constants, and
the initial function $u_0(x)$ satisfies
\begin{equation}\label{u_0}
 u_0\in C^2([0,h_0]),\ u_0'(0)=u_0(h_0)=0, \ u_0'(h_0)<0\ \text{and}\ u_0>0\ \text{in}\ [0,h_0).
\end{equation}
So in this model, the range of the species is the varying interval $[0, h(t)]$, and the species can invade the environment from the right end of the range ($x=h(t)$), with speed propotional to the population gradient $u_x$ there, while at the fixed boundary $x=0$, a no-flux boundary condition is assumed. The function $A(x-ct)$ represents the assumption that the unfavourable part of the environment is moving into the current and future habitat of the species at the speed $c$. We want to know the long-time dynamical behavior of $u(t,x)$.

In the case that $A(x-ct)$ is replaced by a positive constant $a$ (the same  as  in \eqref{a}), so the species is spreading in a favourable homogeneous environment, problem \eqref{MP} was studied in \cite{DL},
where a spreading-vanishing dichotomy was established. (See also \cite{DLou, DLZ} for a more systematic investigation of
similar free boundary models in homogeneous environment of one space dimension.) Moreover, in the case of spreading, it is shown in \cite{BDK, DMZ} that there exists $c_0=c_0(\mu)$ such that $h(t)-c_0t$ converges to some constant as $t\to+\infty$, and
\[
\lim_{t\to\infty}\left[\max_{0\leq x\leq h(t)}\big|u(t,x)-q_{c_0}(h(t)-x)\big|\right]=0,
\]
where $(c_0, q_{c_0}(\xi))$ is uniquely determined by 
\begin{equation}
\lbl{semi-wave}
\left\{
\begin{array}{l}
dq_{c_0}''-c_0q'_{c_0}+aq_{c_0}-bq_{c_0}^2=0,\; q_{c_0}>0  \mbox{ for } \xi>0;\\
q_{c_0}(0)=0,\; q_{c_0}(+\infty)=a/b,\; \mu q_{c_0}'(0)=c_0.
\end{array}\right.
\end{equation}
Moreover, $c_0$ is increasing in $\mu$ and $\lim_{\mu\to+\infty}c_0(\mu)=2\sqrt{ad}$ (see \cite{BDK}).
For comparison, let us remark that if  one takes $r(\xi)=A(\xi)$ in \cite{LBSF}, then the constant $c^*$ in the earlier discussions takes the value $2\sqrt{ad}$, which  is the asymptotic spreading speed of an invading species determined by  \eqref{Cauchy} with  the classical Fisher-KPP nonlinear term
$f=au-bu^2$ (see \cite{AW, Fisher, KPP}).

Using the techniques of \cite{DL}, it is easily seen that \eqref{MP} has a unique (classical) solution, which is defined for all $t>0$. The long-time dynamical behavior of the pair $(u(x,t), h(t))$ is given by the following two theorems.

\begin{theo}
\label{tri}
Let $(u,h)$ be the unique positive solution of \eqref{MP}. Suppose that $0<c<c_0$.
Then exactly one of the following happens:

\noindent
{\bf (i) Vanishing:}\, $\lim_{t\to\infty}h(t)=h_\infty<+\infty$ and
$$\lim\limits_{t\rightarrow\infty}\left[\max_{0\leq x\leq h(t)}u(t,x)\right]=0.$$

\noindent
{\bf (ii) Spreading:}\, 
$ \lim_{t\to\infty}h(t)/t=c_0,$
and for any small $\epsilon>0$,
\[
\lim_{t\to\infty}\left[\max_{(c+\epsilon)t\leq x\leq (1-\epsilon)h(t)}\big|u(t,x)-a/b\big|\right]=0,\;
\lim\limits_{t\rightarrow\infty}\left[\max_{0\leq x\leq (c-\epsilon)t}u(t,x)\right]=0.
\]

\noindent
{\bf (iii) Borderline Spreading:}\, 
$\lim_{t\rightarrow\infty}[h(t)-ct]=L_{*}$, and
\[
\lim_{t\rightarrow\infty}\left[\max_{0\leq x\leq h(t)}\big |u(t,x)-V_{*}(x-h(t)+L_*)\big|\right]=0,
\]
where $L_*>-l_0$ and $V_{*}(x)$ are uniquely determined by
\begin{equation}\left\{\begin{array}{ll}\medskip
\displaystyle dV_*''+cV_*'+A(x)V_*-bV_*^2=0,\; V_*>0\ \  \mbox{ for }  x\in (-\infty, L_{*}), \\
\medskip\displaystyle V_*(-\infty)=V_*(L_{*})=0,\;
\displaystyle -\mu V_*'(L_{*})=c.
\end{array}\right.
\lbl{L_*}
\end{equation}
\end{theo}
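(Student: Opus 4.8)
The plan is to establish the trichotomy in several stages, following the general philosophy of the spreading-vanishing theory but adapting it to the shifting-environment setting. First I would set up the comparison framework: since $A(\xi)\le a$ everywhere, the solution $(u,h)$ of \eqref{MP} is bounded above (in the $u$-component) by a constant depending only on $a,b$ and $\max u_0$, via the logistic bound $u\le\max\{\|u_0\|_\infty, a/b\}$; and $h(t)$ is monotone increasing, so $h_\infty:=\lim_{t\to\infty}h(t)\in(0,+\infty]$ always exists. The key dichotomy at the coarsest level is whether $h_\infty<+\infty$ or $h_\infty=+\infty$. In the former case one shows vanishing (case (i)): a standard argument shows that if $h_\infty<\infty$ then $h'(t)\to 0$, and then a combination of the zero-flux condition at $x=h(t)$ together with parabolic estimates and the fact that the environment on the bounded set $[0,h_\infty]$ is eventually completely unfavourable (since the favourable window $x>\xi_0+ct$ with $\xi_0=0$ here recedes to $+\infty$) forces $\max_{0\le x\le h(t)}u(t,x)\to 0$. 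This last point is where the shifting environment actually helps compared with the classical case.

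Next I would treat the case $h_\infty=+\infty$ and show it splits into (ii) and (iii). For the upper estimate $h(t)\le c_0 t+C$, one compares with the semi-wave $q_{c_0}$ from \eqref{semi-wave}: since $A\le a$, the function $(t,x)\mapsto q_{c_0}(c_0 t+C-x)$ on a suitable expanding interval is a supersolution of \eqref{MP} once $C$ is large enough to dominate the initial data, giving $h(t)\le c_0t+C$ and hence $\limsup h(t)/t\le c_0$, as well as $\limsup_{x\le(c-\epsilon)t}u\le$ (something small), which in fact one sharpens to $\max_{0\le x\le(c-\epsilon)t}u(t,x)\to 0$ by a separate upper-solution argument in the unfavourable region $x<ct$, where $A(x-ct)=a_0\le0$ and the equation is subject to exponential decay. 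The decisive step is to distinguish spreading from borderline spreading by analyzing the moving-frame problem. Passing to the coordinate $\xi=x-ct$, the profile equation is governed by the elliptic problem \eqref{L_*}. I would show that \eqref{L_*} has a (unique) solution $(L_*,V_*)$ with $L_*>-l_0$ — existence via a shooting/phase-plane argument using that $c<c_0$, and uniqueness by a sliding argument — and that this stationary profile is the separatrix: solutions starting below it (in an appropriate sense) vanish, solutions starting above it spread at speed $c_0$, and exactly the borderline trajectory converges to $(L_*,V_*)$.

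To make this precise I would use a squeezing/zero-number argument. On one side, if at some time the solution can be placed strictly below a subsolution that is itself bounded away from $V_*$ and has receding free boundary, then $h_\infty<\infty$ and we are in case (i); conversely, if it ever lies strictly above $V_*(x-h(t)+L_*)$ on a large region, one constructs an expanding subsolution based on $q_{c_0}$-type semi-waves in the asymptotically homogeneous favourable far field to force $h(t)/t\to c_0$ and $u\to a/b$ locally in the favourable window $(c+\epsilon)t\le x\le(1-\epsilon)h(t)$ — the convergence there coming from trapping $u$ between $a/b$ and spatially-shifted logistic subsolutions. The remaining, borderline, possibility is handled by a continuity/connectedness argument in the $\sigma$-parameter (anticipating the second theorem): the set of $\sigma$ for which vanishing occurs is open, the set for which spreading occurs is open, both are nonempty (small resp. large $\sigma$), so their complement is nonempty and every solution there must converge to the stationary profile $(L_*,V_*)$, this convergence being upgraded to the stated uniform limit via parabolic regularity and the uniqueness/stability of $(L_*,V_*)$ as an equilibrium of the moving-frame equation.

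I expect the main obstacle to be the sharp characterization and stability of the borderline profile $(L_*,V_*)$: one must show \eqref{L_*} is uniquely solvable with $L_*>-l_0$ (so that the profile genuinely "sees" the inhomogeneity), and then prove that the borderline solution converges to it in the moving frame. Because the environment $A(x-ct)$ is only asymptotically homogeneous and the problem is on the moving, a priori unbounded domain $[0,h(t)]$ with a nonlinear free-boundary (Stefan) condition, the usual $\omega$-limit set / zero-number machinery cannot be applied off the shelf; it requires careful a priori estimates (uniform Hölder bounds on $u$ and $u_x$ up to $x=h(t)$, control of $h'(t)$) together with a delicate comparison argument that pins the free boundary in the window $h(t)-ct\to L_*$. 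Establishing that no other behaviour (e.g. $h(t)-ct$ oscillating, or $h(t)$ growing sub-linearly but unboundedly) can occur is the crux, and is what forces the rigidity expressed by the trichotomy.
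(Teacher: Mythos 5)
Your sketch contains the right ingredients but misses the key organizing principle that makes the trichotomy provable for a \emph{single, fixed} initial datum. The paper's proof is structured around the quantity $H^*:=\limsup_{t\to\infty}[h(t)-ct]$, which always exists in $[-\infty,+\infty]$, and the three alternatives $H^*<L_*$, $H^*=L_*$, $H^*>L_*$ give an exhaustive, mutually exclusive case split; Theorem 3.1 then shows each of these forces (i), (iii), (ii) respectively. Your decomposition $h_\infty<\infty$ versus $h_\infty=\infty$ handles the first side adequately (the ``environment recedes, hence $u\to0$'' argument is correct and is essentially the easy direction of Lemma 3.11), but on the $h_\infty=\infty$ side you never state a well-defined criterion that exhaustively separates (ii) from (iii). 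You need to rule out, for instance, a trajectory with $h_\infty=+\infty$ but $h(t)-ct\to-\infty$, or with $h(t)-ct$ bounded away from $L_*$ and oscillating; this is not ``another behaviour to be excluded at the end'' but rather the entire content of the case split, and it is done in the paper through the chain Lemma 3.2 $\to$ Lemma 3.4 $\to$ Lemma 3.5 $\to$ Lemma 3.6, each of which pins down a possible value of $H^*$ by a zero-number argument against the traveling profiles $V_0$, $V_{l_*}$, or $W_{l,L}$.

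The second genuine gap is the connectedness-in-$\sigma$ argument. Establishing that the $\sigma$-sets $S_1$ (vanishing) and $S_2$ (spreading) are open with nonempty complement only shows that a borderline $\sigma$ \emph{exists}; it does not show that, for a given initial condition, the long-time behavior is one of exactly three possibilities. The direction of implication in the paper is the reverse of what you propose: the trichotomy (Theorem 1.1) is proved first, for arbitrary $u_0$, using $H^*$ as the classifier, and the sharp threshold in $\sigma$ (Theorem 4.3) is then derived as a corollary. Using the $\sigma$-parametrization to ``discover'' case (iii) is circular in the sense that, without the trichotomy already in hand, you cannot conclude that a solution with $\sigma\in[\sigma_*,\sigma^*]$ converges to $(L_*,V_*)$ rather than exhibiting some unclassified behavior. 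Finally, you are right that the hardest step is proving convergence to $V_*$ in the borderline case, but the mechanism the paper uses is worth naming precisely: after Lemma 3.6 forces $h(t)-ct\to L_*$, one passes to the moving frame $w(t,x)=u(t,x+h(t))$, extracts an entire-in-time limit $\hat w$ via parabolic estimates, observes that $\hat w$ satisfies the \emph{time-independent} free-boundary flux condition $\hat w_x(t,0)\equiv -c/\mu=V_*'(L_*)$, and then applies the zero-number argument of Angenent to $\hat w-V_*(\cdot+L_*)$: this difference would have a degenerate zero at $x=0$ for all $t$, which is impossible unless it vanishes identically. Your sketch invokes ``stability of $(L_*,V_*)$ as an equilibrium'' but no such stability statement is needed or proved; the rigidity comes entirely from the degenerate-zero contradiction.
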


If the initial function in \eqref{MP} has the form $u_0(x)=\sigma \phi(x)$ with some fixed $\phi$ satisfying \eqref{u_0} and $\sigma>0$  a parameter,
then we will show that there exists $\sigma_0\in (0,+\infty]$ such that vanishing happens for $\sigma\in (0, \sigma_0)$,  spreading happens for $\sigma>\sigma_0$, and borderline spreading happens for $\sigma=\sigma_0$. Simple sufficient conditions can be found to guarantee
that $\sigma_0<+\infty$. The detailed statements of these results can be found in Section 4 below.

If $c\geq c_0$, we show that vanishing always happens, as indicated in the following result.

\begin{theo}
\lbl{c>c_0}

If $c\geq c_0$, then $\lim_{t\to\infty}h(t)=h_\infty<+\infty$ and
$$\lim\limits_{t\rightarrow\infty}\left[\max_{0\leq x\leq h(t)}u(t,x)\right]=0.$$

\end{theo}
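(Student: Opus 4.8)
The plan is to reduce the theorem to the single statement $h_\infty:=\lim_{t\to\infty}h(t)<+\infty$; the limit exists because $u>0$ in $(0,h(t))$ and $u(t,h(t))=0$ force $u_x(t,h(t))\le 0$, hence $h'(t)=-\mu u_x(t,h(t))\ge 0$. Once $h_\infty<\infty$ is known, the uniform decay is easy: for $t>(h_\infty+l_0)/c$ one has $x-ct\le h_\infty-ct<-l_0$ for every $x\in[0,h(t)]$, so by \eqref{a} the equation in \eqref{MP} reduces to $u_t=du_{xx}+a_0u-bu^2$ on $0<x<h(t)$ with $a_0\le 0$; comparing $u$ on the moving region $\{0\le x\le h(t),\,t\ge T\}$ with the spatially constant supersolution $\bar u(t)$ solving $\bar u'=a_0\bar u-b\bar u^2$, $\bar u(T)=\max_{[0,h(T)]}u(T,\cdot)$, yields $\max_{[0,h(t)]}u(t,\cdot)\le\bar u(t)$, and $\bar u(t)\to 0$ since $a_0\le 0$. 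So the whole task is to prove $h_\infty<\infty$.

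The first ingredient is an a priori bound on the speed of $h$. Let $(U,H)$ solve the problem obtained from \eqref{MP} by replacing $A(x-ct)$ with the constant $a$; since $A(x-ct)\le a$, the pair $(u,h)$ is a subsolution of this homogeneous problem, so by the comparison principle for such free boundary problems $u\le U$ on $[0,h(t)]$ and $h(t)\le H(t)$ for all $t\ge 0$. By the spreading--vanishing dichotomy of \cite{DL} together with \cite{BDK,DMZ}, either $H_\infty<\infty$ --- and then $h_\infty\le H_\infty<\infty$ and we are done --- or $H(t)-c_0t$ converges, so that $h(t)\le H(t)\le c_0t+C_1$ for some $C_1>0$ and all $t\ge 0$.

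When $c>c_0$ this already suffices. Indeed $c_0t+C_1<ct-l_0$ for $t\ge T:=(C_1+l_0)/(c-c_0)$, so for $t\ge T$ the whole range lies in $\{A(x-ct)=a_0\}$ and, using $u(t,h(t))=0$, $u_x(t,0)=0$ and $du_x(t,h(t))=-\tfrac d\mu h'(t)$,
\[
\frac{d}{dt}\int_0^{h(t)}u(t,x)\,dx=-\frac d\mu h'(t)+\int_0^{h(t)}\big(a_0u-bu^2\big)\,dx\le-\frac d\mu h'(t).
\]
Hence $\int_0^{h(t)}u\,dx+\tfrac d\mu h(t)$ is nonincreasing on $[T,\infty)$ and bounded below by $\tfrac d\mu h(t)$, so $h$ is bounded and $h_\infty<\infty$; with the reduction step this proves the theorem for $c>c_0$.

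The case $c=c_0$ is the real difficulty, and I expect it to be the main obstacle. Now $h(t)\le c_0t+C_1$ no longer forces the range into the unfavourable zone: the favourable window $\{ct<x<h(t)\}$ has width $\le C_1$, need not shrink, and keeps producing population at a bounded but non-vanishing rate, so no monotone functional of the above type can close the estimate. Assume $h_\infty=+\infty$ for contradiction and pass to the moving frame $\xi=x-c_0t$, $v(t,\xi)=u(t,\xi+c_0t)$, where the range is $(-c_0t,g(t))$ with $g(t):=h(t)-c_0t\le C_1$, $v_t=dv_{\xi\xi}+c_0v_\xi+A(\xi)v-bv^2$, $v_\xi(t,-c_0t)=v(t,g(t))=0$, $g'(t)=-\mu v_\xi(t,g(t))-c_0$. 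If $g(t)\to-\infty$ then $h(t)<c_0t-l_0$ eventually and the functional argument above again gives $h_\infty<\infty$, a contradiction, so $g$ stays bounded. Using interior parabolic estimates and a Lyapunov functional for this free boundary problem in the moving frame (which is autonomous in $t$ when $c=c_0$), one should show that the $\omega$-limit of $v$ consists of stationary states, so that along some $t_n\to\infty$ the solution converges in $C^2_{\mathrm{loc}}$ to a solution $(L,V)$ of the problem \eqref{L_*} with $c=c_0$, the (bounded, nonnegative) limit profile satisfying $V(-\infty)=0$. The contradiction then comes from showing \eqref{L_*} has \emph{no} solution for $c=c_0$: the translated semi-wave $Q(\xi):=q_{c_0}(L-\xi)$ solves the same ODE with $A(\xi)$ replaced by $a\ge A(\xi)$ and, by \eqref{semi-wave}, has the same Cauchy data $Q(L)=V(L)=0$, $Q'(L)=V'(L)=-c_0/\mu$ at the free boundary, yet $Q(-\infty)=a/b\ne 0=V(-\infty)$; a careful maximum-principle/shooting analysis of $D=Q-V$, exploiting that $A(\xi)<a$ on $(-\infty,0)$, rules this out. (Alternatively, one may replace $\mu$ by some $\mu'>\mu$, so that $c<c_0(\mu')$ places us under Theorem \ref{tri}, and show that the vanishing threshold there tends to $+\infty$ as $\mu'\downarrow\mu$, whence vanishing for $\mu'$ near $\mu$ and, by monotone dependence on $\mu$, for $\mu$ itself.) The two places requiring genuine work are the non-existence of this critical profile and the compactness/Lyapunov input needed to reduce to it --- in particular ruling out oscillation of $g(t)$.
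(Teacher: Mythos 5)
Your reduction step and the case $c>c_0$ are correct. The reduction (once $h_\infty<\infty$, the decay of $u$ follows because eventually the whole range sits where $A=a_0\le 0$) is fine; the paper instead deduces the decay from its Lemma \ref{0-ct-M}, but your direct ODE comparison works equally well. For $c>c_0$ your integral identity
$\frac{d}{dt}\int_0^{h(t)}u\,dx=-\frac{d}{\mu}h'(t)+\int_0^{h(t)}(a_0u-bu^2)\,dx\le-\frac{d}{\mu}h'(t)$
(once $h(t)\le c_0t+C_1<ct-l_0$) is a neat, self-contained alternative to the paper's route, which instead uses the sharper estimate $h(t)<(c-\delta)t$ from \cite{DMZ}, then Lemma \ref{0-ct-M}, then Lemma \ref{vanishing-h}.

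The case $c=c_0$, however, contains a genuine gap, and you yourself flag it. Your plan is: assume $h_\infty=\infty$, pass to the moving frame, extract (via compactness and a Lyapunov functional) a stationary limit profile satisfying \eqref{L_*} with $c=c_0$, then rule out such a profile. Neither step is established. The ``Lyapunov functional'' for the free-boundary problem in the moving frame is not exhibited; the problem is not autonomous (the left boundary $\xi=-c_0t$ recedes to $-\infty$), and the possible oscillation of $g(t)=h(t)-c_0t$ is precisely the hard issue, which you acknowledge but do not resolve. Even granting convergence, showing the limit satisfies $V(-\infty)=0$ requires the far-field decay controlled in the paper by Lemma \ref{0-ct-M}. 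The non-existence of the critical profile for $c=c_0$ is plausible via the semi-wave comparison you sketch, but it is not carried out. The alternative suggestion (perturb $\mu$ upward, use Theorem \ref{tri}, and send $\sigma_0(\mu')\to\infty$) is likewise undeveloped and not obviously true.

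For comparison, the paper's proof of the $c=c_0$ case is entirely different and avoids any $\omega$-limit/Lyapunov machinery: it first uses Lemma \ref{0-ct-M} plus a comparison on a fixed compact interval in the moving frame to establish a strict uniform bound $u(t,x)<\frac{a}{b}-\epsilon_2$ on the whole range $[0,ct+L]$ for large $t$; then, using the semi-wave $U_{c-\delta}$ with $\delta>0$ small (so that $U_{c-\delta}(-\infty)>\frac{a}{b}-\frac{\epsilon_2}{2}$ eventually dominates $u$), it builds a travelling supersolution moving at speed $c-\delta<c_0$ and concludes $h(t)\le ct_1+(c-\delta)t+L_1$, which forces vanishing by Lemma \ref{0-ct-M} and then $h_\infty<\infty$ by Lemma \ref{vanishing-h}. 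The key insight you are missing is that one does not need the long-time profile at $c=c_0$ at all; a strict sub-$\frac{a}{b}$ bound on $u$ lets you slip a slightly slower wave above $u$, which is enough.
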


\begin{remark}\lbl{rm1.3}{\rm
\begin{itemize}
\item[(i)] In Theorem \ref{tri}  case (ii), it is possible to use the techniques of \cite{DMZ} to show that
$\lim_{t\to\infty}[h(t)-c_0t]$ exists and is finite, and
\[
\lim_{t\to\infty}\left[\max_{(c+\epsilon)t\leq x\leq h(t)}\big|u(t,x)-q_{c_0}(h(t)-x)\big|\right]=0.
\]
To avoid the paper becoming too long, we have refrained from doing this here.
\item[(ii)] Compared with the phenomena revealed in \cite{LBSF} by using \eqref{Cauchy}, our Theorem \ref{tri} above captures some more varied long-time dynamical behaviors of the species for the case $0<c<c_0$. For the case $c\geq c_0$, our result here (Theorem \ref{c>c_0})
is paralelle to that for the  case
$c>c^*$ in \cite{LBSF}. 
\item[(iii)] It is interesting to note that in the case of favourable homogeneous environment considered in \cite{DL}, the long-time dynamical behavior of the species is governed by a spreading-vanishing dichotomy, while in the case of Theorem \ref{tri}, the long-time dynamical behavior is determined by a trichotomy. Theorems \ref{tri} and \ref{c>c_0} together clearly indicate that changing environments cause fundamental changes to the behavior of affected ecological species.
\item[(iv)] The trichotomy in Theorem \ref{tri} is similar in spirit to one of the main results in \cite{GLZ}, where 
a  free boundary problem with advection is considered in a homogeneous environment.
\end{itemize}}
\end{remark}

There are several recent related work on the free boundary model in spatially inhomogeneous environment (mostly for one space dimension or in a setting with spherical symmetry). In \cite{DLiang}, the case of periodic spatial environment is studied. Other types of heterogeneous spatial environments are considered in \cite{LLZ, mxW, ZX}. In \cite{DGP, PZ}, time-periodic environments are considered. See also the survey \cite{D2} for some further related research.

The rest of this paper is organized as follows. In Section 2, we give the existence and uniqueness result for \eqref{MP}, as well as results on several auxiliary elliptic problems, which will be used for proving the main results later. Section 3 is the main part of the paper, where we prove Theorem \ref{tri} through various comparison arguments, based on the construction of super-subsolutions,
and on suitable applications of a zero number result of Angenent \cite{Ang} in several key steps. In Section 4, we examine how the long-time dynamical behavior of \eqref{MP} changes as the initial function is varied. Section 5, the final section, constitutes the proof of Theorem \ref{c>c_0}.

\section{Preliminary results}

\subsection{ Existence and uniqueness} 
\setcounter{equation}{0}

The following local existence and uniqueness result can be proved by the contraction mapping theorem as in \cite{DL}.

\begin{theo}\lbl{local}{\rm (Local existence)}
 For any given $u_0$ satisfying \eqref{u_0} and any $\alpha\in(0,1)$, there is a $T>0$ such that problem \eqref{MP} admits a unique positive solution
 \[
  (u,h)\in C^{(1+\alpha)/2,1+\alpha}(D_T)\times C^{1+\alpha/2}([0,T]);
 \]
 moreover,
 \[
  \|u\|_{C^{(1+\alpha)/2,1+\alpha}(D_T)}+\|h\|_{C^{1+\alpha/2}([0,T])}\leq C,
 \]
 where $D_T=\{(t,x)\in \mathbb{R}^2: x\in [0,h(t)],\;t\in[0,T]\}$, $C$ and $T$ only depend on $h_0$, $\alpha$ and $\|u_0\|_{C^2([0,h_0])}$.
\end{theo}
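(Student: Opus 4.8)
The plan is the standard ``straighten the free boundary plus contraction mapping'' argument, carried out exactly as in \cite{DL}. First I would remove the dependence of the spatial domain on the unknown $h$ by the change of variable $\xi = h_0 x/h(t)$, $v(t,\xi) := u(t,h(t)\xi/h_0)$, which transforms \eqref{MP} into the equivalent problem on the fixed interval $[0,h_0]$:
\[
\begin{cases}
v_t = \dfrac{dh_0^2}{h^2}\, v_{\xi\xi} + \dfrac{h'}{h}\,\xi\, v_\xi + A\big(\tfrac{h}{h_0}\xi - ct\big) v - b v^2, & 0<\xi<h_0,\ t>0,\\[2mm]
v_\xi(t,0) = v(t,h_0) = 0,\quad h'(t) = -\dfrac{\mu h_0}{h(t)}\, v_\xi(t,h_0), & t>0,\\[2mm]
v(0,\xi) = u_0(\xi),\ h(0) = h_0, & 0\le\xi\le h_0.
\end{cases}
\]
Now the PDE has coefficients depending on the still-unknown $h(t)$, but the spatial domain is fixed, and the whole system is coupled through the boundary flux $v_\xi(t,h_0)$.

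Next I would set up the fixed point. For small $T>0$ and $\alpha\in(0,1)$, let $\mathcal X_T$ be the closed set of $h\in C^{1+\alpha/2}([0,T])$ with $h(0)=h_0$, $h'(0) = -\mu u_0'(h_0)$ and $\|h-h_0\|_{C^{1+\alpha/2}([0,T])}\le 1$; for $T$ small every such $h$ satisfies $h_0/2 < h < 2h_0$, so the transformed equation is uniformly parabolic with constants depending only on $h_0$. Given $h\in\mathcal X_T$, the first two lines of the transformed system form a linear mixed Neumann--Dirichlet parabolic problem for $v$ (the quadratic term $-bv^2$ is harmless locally: an a priori $L^\infty$ bound by comparison with the logistic ODE, or truncation, reduces it to the bounded case), and parabolic $L^p$ estimates followed by Sobolev embedding yield a unique solution $v=v_h$ with $\|v_h\|_{C^{(1+\alpha)/2,1+\alpha}([0,T]\times[0,h_0])}\le C_1$, where $C_1$ depends only on $h_0$, $\alpha$ and $\|u_0\|_{C^2}$ and not on $T\le 1$. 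In particular $v_{h,\xi}(\cdot,h_0)\in C^{\alpha/2}([0,T])$ with the same bound and $v_{h,\xi}(0,h_0)=u_0'(h_0)$. I would then define $\Gamma(h)(t) := h_0 - \mu\int_0^t \tfrac{h_0}{h(s)}\, v_{h,\xi}(s,h_0)\,ds$, verify from the above that $\Gamma(h)(0)=h_0$, $\Gamma(h)'(0)=-\mu u_0'(h_0)$ and $\|\Gamma(h)-h_0\|_{C^{1+\alpha/2}([0,T])}\le C_2 T^{\alpha/2}$, so that $\Gamma$ maps $\mathcal X_T$ into itself once $T$ is chosen small.

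The contraction estimate is the heart of the argument. For $h_1,h_2\in\mathcal X_T$, subtracting the equations for $v_{h_1}$ and $v_{h_2}$ gives a linear parabolic problem for the difference whose coefficients, right-hand side and boundary data differ by $O(\|h_1-h_2\|_{C^1([0,T])})$, so parabolic estimates give $\|v_{h_1}-v_{h_2}\|_{C^{(1+\alpha)/2,1+\alpha}}\le C_3\|h_1-h_2\|_{C^1([0,T])}$ and hence $\|\Gamma(h_1)-\Gamma(h_2)\|_{C^1([0,T])}\le C_4 T^{\alpha/2}\|h_1-h_2\|_{C^1([0,T])}$; shrinking $T$ once more makes $\Gamma$ a contraction (in the $C^1$ metric, on the ball $\mathcal X_T$, which is complete in that metric). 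Its unique fixed point, after undoing the change of variables, is the unique solution $(u,h)$ of \eqref{MP} with the asserted regularity and bounds; positivity of $u$ on $D_T$ then follows from the strong maximum principle and Hopf's lemma applied to the equation for $v$.

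I expect the main obstacle to lie in the boundary and corner regularity: one must ensure that the mixed Neumann--Dirichlet parabolic estimate of the second step holds up to $t=0$ and up to both ends $\xi=0,h_0$, uniformly for $h\in\mathcal X_T$, which requires checking that \eqref{u_0} supplies exactly the zeroth-order compatibility conditions ($u_0'(0)=0$ at the Neumann end and $u_0(h_0)=0$ at the Dirichlet end), and that $u_0'(h_0)<0$ makes $h'(0)=-\mu u_0'(h_0)$ a consistent (and positive) initial value for the free boundary. Once this regularity is secured, the remaining steps are routine perturbation estimates, identical to those in \cite{DL}.
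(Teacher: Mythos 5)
Your proposal is correct and is essentially the proof the paper intends: the paper gives no argument of its own for Theorem~\ref{local} but explicitly defers to the contraction-mapping proof of \cite{DL}, which is exactly the straightening-plus-fixed-point scheme you carry out, including the $L^p$--Sobolev route to the $C^{(1+\alpha)/2,1+\alpha}$ bound and the $C^1$-metric contraction for $\Gamma$. The only cosmetic difference is that \cite{DL} straightens the boundary with a diffeomorphism that is the identity away from the free boundary rather than your global rescaling $\xi=h_0x/h(t)$; this changes nothing here, since your scaling preserves the Neumann condition at $x=0$ and the compatibility conditions you identify are precisely those supplied by \eqref{u_0}.
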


 To show that the local solution obtained in Theorem \ref{local} can be extended to all $t>0$, as in \cite{DL},  the following estimates are useful.
\begin{lem}\lbl{estimate}
Let $(u,h)$ be a positive solution to problem \eqref{MP} defined for $t\in(0,T_0)$ for some $T_0\in(0,+\infty]$. Then there exist constants $C_1$ and $C_2$ independent of $T_0$ such that
\[
 0<u(t,x)\leq C_1, \ 0<h'(t)\leq C_2\ \ \text{for} \ 0\leq x<h(t)\ \mbox{and} \ t\in (0,T_0).
\]
\end{lem}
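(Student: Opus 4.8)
The plan is to prove the three assertions in turn, taking care that each constant produced depends only on the data of \eqref{MP} and not on $T_0$.

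\emph{Bound and positivity of $u$.} By \eqref{a} and the monotonicity of $A$ on $[-l_0,0]$ one has $A(\xi)\le a$ for every $\xi\in\bR^1$, so any positive solution satisfies
\[
u_t-du_{xx}=A(x-ct)u-bu^2\le au-bu^2,\qquad 0<x<h(t),\ 0<t<T_0 .
\]
Hence the spatially constant function $\bar u\equiv C_1:=\max\{\|u_0\|_{\infty},\,a/b\}$ is a supersolution of \eqref{MP}: it obeys $\bar u_t\ge a\bar u-b\bar u^2$, $\bar u_x(t,0)=0$, $\bar u\ge0=u(t,h(t))$ on the free boundary, and $\bar u\ge u_0$ at $t=0$. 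The comparison principle for \eqref{MP} (established as in \cite{DL}) then gives $u\le C_1$ on its domain, $C_1$ independent of $T_0$. Positivity of $u$ on $[0,h(t))$ for $t\in(0,T_0)$ follows from the strong maximum principle applied to the linear equation $u_t-du_{xx}-(A(x-ct)-bu)u=0$ (the coefficient is bounded, by the previous bound) together with the Hopf lemma at $x=0$, which rules out $u(t,0)=0$ since $u_x(t,0)=0$; here one uses $u_0>0$ in $[0,h_0)$.

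\emph{Positivity of $h'$.} Since $u>0$ in $[0,h(t))$ while $u(t,h(t))=0$, Hopf's lemma at the free boundary gives $u_x(t,h(t))<0$, hence $h'(t)=-\mu u_x(t,h(t))>0$; in particular $h$ is increasing, so $h(t)\ge h_0$ on $[0,T_0)$.

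\emph{Upper bound on $h'$.} This is the real point, and I would obtain it with a local barrier near the free boundary. Fix a constant $M\ge\max\{\sqrt{a/(2d)},\,1/h_0,\,\|u_0'\|_{\infty}/C_1\}$, which depends only on the data, and set
\[
w(t,x):=C_1\Big[\,2M\big(h(t)-x\big)-M^2\big(h(t)-x\big)^2\,\Big]
\]
on the thin curved strip $\Omega_M:=\{(t,x):0\le t<T_0,\ h(t)-M^{-1}\le x\le h(t)\}$; note $h(t)-M^{-1}\ge h_0-M^{-1}\ge0$. Writing $y:=h(t)-x\in[0,M^{-1}]$ one computes $0\le w\le C_1$, $w_{xx}=-2C_1M^2$, and $w_t=2C_1Mh'(t)(1-My)\ge0$ by the previous step, so that $w_t-dw_{xx}-A(x-ct)w+bw^2\ge 2dC_1M^2-aC_1\ge0$ by the choice of $M$; thus $w$ is a supersolution of the equation in $\Omega_M$. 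On the parabolic boundary of $\Omega_M$ one has $w=u=0$ on $x=h(t)$, $w=C_1\ge u$ on $x=h(t)-M^{-1}$ by the first step, and on $\{t=0\}$, since $u_0(h_0)=0$ gives $u_0(x)\le\|u_0'\|_{\infty}y$ while $w(0,x)\ge C_1My\ge\|u_0'\|_{\infty}y$, we get $w(0,x)\ge u_0(x)$. The ordinary parabolic comparison principle on $\Omega_M$ (applicable since $h\in C^{1+\alpha/2}$ by Theorem \ref{local}) then yields $u\le w$ in $\Omega_M$; as $u=w=0$ on $x=h(t)$ while $x<h(t)$ inside, comparison of the (outward) normal derivatives gives $u_x(t,h(t))\ge w_x(t,h(t))=-2C_1M$, and therefore $h'(t)=-\mu u_x(t,h(t))\le 2\mu C_1M=:C_2$, independent of $T_0$. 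The two applications of the comparison principle are routine; the one delicate step is to design the barrier $w$ — in particular to pin the coefficient $M$ — so that simultaneously $w$ is a supersolution, $w$ dominates $u$ on the whole parabolic boundary of the strip, and $M$ (hence $C_2$) is tied to the data rather than to $T_0$. This is precisely the bookkeeping that then lets the local solution of Theorem \ref{local} be continued to all $t>0$.
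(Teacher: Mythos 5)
Your proof is correct and is essentially the same argument as in Du and Lin \cite{DL}, which the paper explicitly invokes (the paper states Lemma \ref{estimate} without proof, remarking that it is an easy modification of \cite{DL}): the bound $u\le C_1:=\max\{\|u_0\|_\infty,a/b\}$ via comparison with a constant using $A\le a$, positivity of $h'$ via the Hopf lemma, and the bound $h'\le C_2$ via the quadratic barrier $w(t,x)=C_1\bigl[2M(h(t)-x)-M^2(h(t)-x)^2\bigr]$ on the strip $\{h(t)-M^{-1}\le x\le h(t)\}$ with $M$ chosen from the data. You have correctly identified the only change needed from \cite{DL}, namely replacing $a$ by $A(x-ct)$ and using $A\le a$ in the supersolution inequalities.
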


Using Theorem \ref{local} and Lemma \ref{estimate}, we can prove the following global existence result.

\begin{theo}\lbl{global}{\rm (Global existence)}
 The solution of problem \eqref{MP} is defined  for all $t\in (0,\infty)$.
\end{theo}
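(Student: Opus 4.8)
The plan is the standard continuation argument for free boundary problems of this type. By Theorem~\ref{local} together with its uniqueness assertion, a routine patching shows that \eqref{MP} has a unique positive solution $(u,h)$ on a maximal existence interval $(0,T_0)$ with $T_0\in(0,+\infty]$, where maximality means: if $T_0<\infty$, then $(u,h)$ cannot be continued past $T_0$ as a solution of \eqref{MP} in the class $C^{(1+\alpha)/2,1+\alpha}\times C^{1+\alpha/2}$. It therefore suffices to rule out $T_0<\infty$. Suppose, for contradiction, that $T_0<\infty$. Applying Lemma~\ref{estimate} on $(0,T_0)$ gives constants $C_1,C_2$ (independent of $T_0$) with $0<u(t,x)\le C_1$ and $0<h'(t)\le C_2$ for $t\in(0,T_0)$, $0\le x<h(t)$; in particular $h$ is strictly increasing and $h_0\le h(t)\le h_0+C_2T_0=:H$ for all $t\in[0,T_0)$.

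The next step is to obtain a bound on $\|u(t,\cdot)\|_{C^2([0,h(t)])}$ that is uniform for $t$ near $T_0$. Since the spatial domain $[0,h(t)]$ varies with $t$, I would first straighten the free boundary through the change of variables $y=x/h(t)$, $v(t,y)=u(t,h(t)y)$, which converts the first three lines of \eqref{MP} into a uniformly parabolic problem for $v$ on the fixed cylinder $(0,T_0)\times[0,1]$, with coefficients built from $h(t)$, $h'(t)$ and $y$ that are bounded by virtue of the estimates of Lemma~\ref{estimate}. Interior-in-time parabolic $L^p$ estimates followed by Schauder estimates, applied on $[T_0/2,T_0)\times[0,1]$, then give a bound on $\|v\|_{C^{(1+\alpha)/2,1+\alpha}}$ there, and hence, reverting variables, a uniform bound $\|u(t,\cdot)\|_{C^2([0,h(t)])}\le M$ for $t\in[T_0/2,T_0)$, with a corresponding bound on $u_x(t,h(t))$ and thus $h\in C^{1+\alpha/2}([0,T_0])$. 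Moreover, for each such $t$ the strong maximum principle yields $u(t,\cdot)>0$ on $[0,h(t))$, while the Hopf boundary lemma, combined with $h'(t)=-\mu u_x(t,h(t))>0$ from Lemma~\ref{estimate}, gives $u_x(t,h(t))<0$; together with the boundary conditions $u_x(t,0)=u(t,h(t))=0$, this shows $(u(t,\cdot),h(t))$ satisfies the requirements \eqref{u_0} (with $h_0$ replaced by $h(t)$).

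Now fix $t_0\in(T_0/2,T_0)$. The shifted pair $\big(u(t_0+\cdot,\cdot),h(t_0+\cdot)\big)$ solves \eqref{MP} with initial data $\big(u(t_0,\cdot),h(t_0)\big)$, where $h(t_0)\le H$ and $\|u(t_0,\cdot)\|_{C^2}\le M$ with $H,M$ independent of $t_0$. By Theorem~\ref{local}, this problem has a unique solution on a time interval of length $T=T(H,\alpha,M)>0$ that does not depend on $t_0$. Choosing $t_0$ so close to $T_0$ that $T_0-t_0<T$, and using the uniqueness in Theorem~\ref{local} to glue this solution to $(u,h)$ at time $t_0$, we obtain a solution of \eqref{MP} on $[0,t_0+T)$, which strictly contains $[0,T_0)$ — contradicting the maximality of $T_0$. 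Hence $T_0=+\infty$, which is the assertion of the theorem.

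I expect the only non-routine ingredient to be the uniform-in-$t$ spatial $C^2$ bound on $u$ as $t\uparrow T_0$: this is precisely what allows the local existence time $T$ furnished by Theorem~\ref{local} to be chosen independently of the restart time $t_0$, closing the continuation loop. Once the free boundary has been straightened as above, the bounds of Lemma~\ref{estimate} reduce this to standard interior parabolic regularity, so no genuinely new estimate is required; the remainder of the argument is the bookkeeping of the maximal-interval/continuation scheme.
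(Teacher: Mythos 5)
Your continuation argument is the standard one and is exactly the approach of \cite{DL}, to which the paper defers (it explicitly omits this proof as an ``easy modification'' of \cite{DL}); the ingredients you list --- maximal interval, the a priori bounds of Lemma~\ref{estimate}, straightening the boundary, parabolic regularity to make the restart data admissible, and gluing via the uniqueness in Theorem~\ref{local} --- are the same ones used there. The one small bookkeeping slip is that a $C^{(1+\alpha)/2,1+\alpha}$ bound on $v$ yields only a $C^{1+\alpha}$ spatial bound on $u(t,\cdot)$, not the $C^2$ bound that Theorem~\ref{local} requires of the restart datum; this is repaired by one further Schauder step, since the first estimate makes $v_y(\cdot,1)$, and hence the coefficient $h'$, H\"older in $t$, after which parabolic Schauder gives $v\in C^{(2+\alpha)/2,2+\alpha}$ interior in time and therefore the needed uniform $C^2$ bound on $u(t_0,\cdot)$.
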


We omit the proofs of  these results as they are easy modifications of those  in \cite{DL}.

\subsection{Some auxiliary elliptic problems}

In this subsection,  we study several elliptic problems for later use. In particular we will prove the existence and uniqueness of $(L_*, V_*)$ appearing in 
\eqref{L_*}.

Let $c_0=c_0(\mu)$ and $ q_{c_0}(\xi)$ be given in \eqref{semi-wave}. We assume throughout this subsection that
\[
0<c<c_0.
\]

\begin{lem}\lbl{l-l}
Assume $C\in [0, 2\sqrt{ad}\,)$. Then for all
large $l>0$, the problem
\begin{equation}\label{w-l-l}
dw''+Cw'+aw-bw^2=0 \mbox{ for } -l<x<l,\;
w(-l)=w(l)=0
\end{equation}
admits a unique positive solution $w_l(x)$. Moreover,  $\lim_{l\rightarrow\infty} w_l(x)=\frac{a}{b}$ uniformly in any compact subset of $\bR^1$, and 
\[
\lim_{l\to+\infty}w_l'(l)=-C/\mu_C,
\]
where $\mu_C>0$ is  uniquely determined by  $c_0(\mu_C)=C$.
\end{lem}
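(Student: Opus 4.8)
The plan is to establish Lemma \ref{l-l} in four stages: existence/uniqueness of $w_l$, monotonicity and the limit $w_l\to a/b$, an energy/phase-plane analysis to control $w_l'(l)$, and finally identification of the limiting slope with $-C/\mu_C$ via the semi-wave characterization \eqref{semi-wave}.

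\textbf{Existence and uniqueness of $w_l$.}\ First I would transform away the drift term: writing $w(x)=e^{-Cx/(2d)}v(x)$ does not quite work because of the quadratic term, so instead I would work directly with \eqref{w-l-l} as a semilinear Dirichlet problem on $(-l,l)$. The associated operator $w\mapsto dw''+Cw'$ has principal eigenvalue on $(-l,l)$ (with zero Dirichlet data) equal to $-d(\pi/(2l))^2 + \frac{C^2}{4d}\cdot(\text{something})$; more precisely, after the substitution $w=e^{-Cx/(2d)}\psi$ the linearization at $0$ becomes $d\psi''+(a-\frac{C^2}{4d})\psi=\lambda\psi$, whose principal eigenvalue on $(-l,l)$ is $\lambda_1(l)=a-\frac{C^2}{4d}-d(\pi/(2l))^2$. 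Since $C<2\sqrt{ad}$ we have $a-\frac{C^2}{4d}>0$, so $\lambda_1(l)>0$ for all large $l$. Then $0$ is a subsolution and $M=a/b$ a supersolution (for $M$ large enough it is a genuine supersolution since $aM-bM^2\le 0$ when $M\ge a/b$), and the instability of $0$ together with the sub/supersolution method gives a positive solution; uniqueness follows from the standard sweeping/sliding argument for logistic-type nonlinearities (the map $w\mapsto (aw-bw^2)/w = a-bw$ is decreasing), exactly as in \cite{DL}. I would also record that $w_l$ is even? — no, the drift breaks the symmetry, so I would not claim evenness; instead I record that $w_l$ is increasing in $l$ by comparison (a solution on a larger interval dominates the restriction of the extension-by-zero of the smaller one, via the sweeping argument).

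\textbf{Convergence $w_l\to a/b$.}\ Monotonicity in $l$ plus the uniform bound $w_l\le a/b$ gives a pointwise limit $w_\infty(x)$, which by elliptic estimates is a bounded positive solution of $dw''+Cw'+aw-bw^2=0$ on all of $\bR^1$. A Liouville-type argument (again using $C<2\sqrt{ad}$, so that $0$ is linearly unstable for the operator on the whole line and there is no nonconstant nonnegative bounded solution other than translates of a front — but fronts are excluded because $w_\infty$ is a monotone increasing limit bounded by $a/b$ and hence must be $\ge$ the solution on every compact set, forcing $w_\infty\equiv a/b$) shows $w_\infty\equiv a/b$; the convergence is uniform on compacts by Dini's theorem. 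This part is routine.

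\textbf{The limiting slope $\lim_{l\to\infty}w_l'(l)$ and identification with $-C/\mu_C$.}\ This is the main obstacle and the part requiring real work. The idea is to show that near the right endpoint $x=l$, the rescaled function $W_l(\xi):=w_l(l-\xi)$ converges as $l\to\infty$ to the semi-wave $q_C(\xi)$ solving the problem $dq''-Cq'+aq-bq^2=0$ on $(0,\infty)$, $q(0)=0$, $q(+\infty)=a/b$ — note that $W_l$ satisfies $dW_l''-CW_l'+aW_l-bW_l^2=0$ on $(0,2l)$ with $W_l(0)=0$. Since $w_l\to a/b$ uniformly on compacts "in the interior", one expects $W_l(\xi)\to q_C(\xi)$ locally uniformly in $\xi\ge 0$; then $w_l'(l) = -W_l'(0)\to -q_C'(0)$. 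To make this rigorous I would (a) use the interior convergence to get a lower barrier for $W_l$ pushing it up toward $a/b$ away from $\xi=0$, (b) use a sliding/phase-plane argument in the $(W,W')$-plane: the trajectory of \eqref{semi-wave}-type ODE from $(0, s)$ with $s>0$ either blows up, hits $W=0$ again, or converges to $(a/b,0)$, and there is a unique $s=s_*$ giving the heteroclinic; monotone dependence of the trajectory on $s$ and the fact that $W_l$ exists on the long interval $(0,2l)$ staying in $(0,a/b)$ forces $W_l'(0)\to s_*$. Finally, $q_C$ so obtained must coincide with $q_{c_0}$ of \eqref{semi-wave} precisely when $c_0=C$, i.e. $q_C = q_{\mu_C}$ where $\mu_C$ solves $c_0(\mu_C)=C$; and the defining relation $\mu_{C} q_C'(0)=C$ gives $q_C'(0)=C/\mu_C$, hence $\lim_{l\to\infty}w_l'(l)=-q_C'(0)=-C/\mu_C$. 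The existence of a unique such $\mu_C$ is guaranteed because $c_0(\cdot)$ is continuous, strictly increasing, with $c_0(\mu)\to 2\sqrt{ad}>C$ as $\mu\to\infty$ and $c_0(0^+)=0<C$ (as recalled after \eqref{semi-wave}). The delicate point throughout is justifying that no mass of $W_l$ "escapes" near $\xi=0$ — i.e. that $W_l'(0)$ does not drift to $0$ or to the blow-up threshold — which I would handle by comparison with sub- and supersolutions built from $q_{C\pm\delta}$ for small $\delta$, exploiting the continuity of $s_*=s_*(C)$ and of $q_C'(0)$ in $C$.
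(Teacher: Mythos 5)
Your overall strategy matches the paper's: transform \eqref{w-l-l} to a logistic-type problem for existence/uniqueness, use monotonicity in $l$ to extract a whole-line limit $w_\infty$ and show it equals $a/b$, then translate the right endpoint to the origin and identify the limiting slope with that of the semi-wave $q_{c_0}$ from \eqref{semi-wave}. The paper simply cites the proof of Proposition 2.1 in \cite{BDK} for the final step, whereas you sketch a phase-plane reconstruction of that result; that is legitimate (it is essentially what \cite{BDK} does) but is more work than strictly necessary.

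Two remarks on the first part. First, your opening line --- that $w=e^{-Cx/(2d)}v$ ``does not quite work because of the quadratic term'' --- is misleading: the paper performs exactly such a substitution (combined with a rescaling of $x$) and it does work; the quadratic term merely picks up a bounded positive weight $e^{-\lambda x/2}$, after which Theorem 5.1 in \cite{D} applies directly. Your alternative sub/supersolution route is also fine, but the substitution is not broken.

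The genuine gap is the Liouville step for $w_\infty$. You assert that a bounded positive entire solution of $dw''+Cw'+aw-bw^2=0$ must be $a/b$, with a parenthetical excluding ``fronts'' on grounds that are not an argument: traveling-wave profiles of this equation exist, are bounded, positive, and satisfy $0<w<a/b$, so they are not ruled out by the bound alone, nor by the fact that $w_\infty$ arose as an increasing limit. The paper closes this via a translation-comparison trick you did not supply: since the comparison principle gives $w_\infty(x)\geq w_n(x+x_0)$ on $(-l_n-x_0,l_n-x_0)$ for \emph{every} real $x_0$ (not just $x_0=0$), taking the sup over $x_0$ yields $w_\infty(x)\geq\|w_n\|_\infty$ for each $x$, hence $w_\infty(x)\geq\|w_\infty\|_\infty$ after $n\to\infty$. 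Thus $w_\infty$ equals its own supremum and is constant, and the equation forces the constant to be $a/b$. Without some device of this kind your second step does not close, and the Dini argument for uniform convergence on compacts then also has nothing to stand on. I would insert the translation argument explicitly; once $w_\infty\equiv a/b$ is secured, the remainder of your outline is sound.
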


\begin{proof}
We define
\[
 \lambda=\frac{C}{\sqrt{ad}}\ \ \text{and}\ \ \ v(x)=\frac{b}{a}e^{\frac{\lambda}{2}x}w(\sqrt{\frac{d}{a}}x).
\]
Then (\ref{w-l-l}) is changed to the equivalent problem
\begin{equation}\label{v-l-l}
\begin{cases}
-v''=(1-\frac{\lambda^2}{4})v-e^{-\frac{\lambda}{2}x}v^2\ \ & \mbox{ for } -\tilde{l}<x<\tilde{l},\\
v(-\tilde{l})=v(\tilde{l})=0,
\end{cases}
\end{equation}
with
\[
 \tilde{l}:= \sqrt{\frac{a}{d}}l.
\]
Due to $0\leq C<2\sqrt{ad}$, we have $1-{\lambda^2}/{4}>0$ and hence for all large $l$, by a well-known result on logistic type equations  (see, e.g., Theorem 5.1 in \cite{D}),  problem (\ref{v-l-l}) has a unique positive solution $v_l$, which in turn defines a unique positive solution $w_l$ for (\ref{w-l-l}).

Now we choose an increasing sequence $l_1<l_2<\cdots<l_n\rightarrow \infty$ with $l_1$  large enough so that $w_n:=w_{l_n}$ is defined for all $n\geq 1$. 
By the comparison principle (Lemma 2.1 in \cite{DM}), we have $w_n\leq w_{n+1}$ on $(-l_n,l_n)$. As any positive constant $M$ satisfying $M\geq {a}/{b}$ can be used as a supersolution of \eqref{w-l-l}, we see that $w_n\leq{a}/{b} $ for all $n$. Thus,  $w_\infty=\lim_{n\rightarrow\infty} w_n$ is well-defined on $\bR^1$. Furthermore, by the standard regularity considerations, we see that $w_n\to w_\infty$ in $C^2_{loc}(\bR^1)$ and $w_\infty$ satisfies
\begin{equation}\label{eq4.14}
-dw_{\infty}''-Cw_\infty'=aw_\infty-bw_\infty^2,\ \  x\in\bR^1.\\
\end{equation}
As $w_\infty\geq w_n>0$ on $(-l_n,l_n)$ for each $n$, we know that $w_\infty$ is a positive solution of (\ref{eq4.14}).

By Lemma 2.1 in \cite{DM},  we easily see $w_\infty(x)\geq w_n(x+x_0)$ on $(-l_n-x_0, l_n-x_0)$ for arbitrary $x_0$. 
Hence, for any $x\in\bR^1$, $w_\infty(x)\geq\max_{[-l_n,l_n]} w_n=\|w_n\|_\infty$ for all $n$. Let $n\rightarrow\infty$, we otain $w_\infty(x)\geq \lim_{n\rightarrow\infty}\|w_n\|_\infty=\|w_\infty\|_\infty$.
Hence $w_\infty(x)=\|w_\infty\|_\infty$   for $x\in\bR^1$. By (\ref{eq4.14}), $w_\infty$ equals to ${a}/{b}$.
So, $\lim_{l\rightarrow\infty}w_l(x)=a/b$ uniformly in any compact subset of $\bR^1$.

Define $U_l(x):=w_l(x+l)$.  From the proof of Proposition 2.1 in \cite{BDK} we know that 
\begin{equation}\label{eq4.35}
U_l(x)\rightarrow U_*(x) \ \ \text{in}\ C^1_{loc}((-\infty, 0])\ \ \ \ \text{as}\ \ l\rightarrow \infty,
\end{equation}
and $U_*$ is the unique positive solution of
\[
-dU''-CU'=aU-bU^2 \mbox{ for } x\in (-\infty, 0], \; U(0)=0.
\]
Moreover, if $c_0(\mu_C)=C$, then $U'_*(0)=-C/\mu_C$. Therefore
\[
\lim_{l\to+\infty} w_l'(l)=\lim_{l\to+\infty}U_l'(0)=U_*'(0)=-C/\mu_C.
\]
The proof is complete.
\end{proof}

Next we consider, for $l\geq 0$ and $L>-l$, the logistic type problem
\bes\left\{\begin{array}{ll}\medskip
\displaystyle dV''+cV'+A(x)V-bV^2=0,\ \ & -l<x<L, \\
\medskip\displaystyle V(-l)=V(L)=0.
 \end{array}\right.\lbl{V-l-L}
\ees
Let $\lambda_1[-l,L]$ denote the first eigenvalue of
\bess\left\{\begin{array}{ll}\medskip
\displaystyle -d\phi''-c\phi'-A(x)\phi=\lambda\phi,\ \ & -l<x<L, \\
\displaystyle \phi(-l)=\phi(L)=0.\\
 \end{array}\right.
\eess
Then \eqref{V-l-L} has a unique positive solution, which we denote by $V_{l, L}$, if and only if $\lambda_1[-l,L]<0$.

Since $A(x)=a$ for $x\geq 0$, and  $c\in (0, c_0)$, we see from the phase-plane analysis for case (iv) in Section 3.2 of \cite{GLZ} (note that the $c_0$ there is different from our $c_0$ here) that there exists a unique $L(0)>0$ such that \eqref{V-l-L} with $l=0$ and $L=L(0)$ has a unique positive solution $V_0$ satisfying
\[
-\mu V_0'(L(0))=c.
\]
We also note that the equation 
\[
dV''+cV'+A(x)V-bV^2=0
\] 
can be rewritten in the form
\[
-(d e^{\frac{c}{d}x}V')'=e^{\frac{c}{d}x}(A(x)V-bV^2),
\]
and hence the comparison principle in \cite{DM} can be applied directly to this equation.

\begin{lem}\lbl{lem3}
$(i)$\, For each $l>0$, there is a unique $L(l)>-l$ such that
\eqref{V-l-L} with $L=L(l)$
has a unique positive solution $V_{l}$ satisfying $-\mu V_l'(L(l))=c;$

$(ii)$\, The function $l\to L(l)$ is decreasing, and $L_{*}:=\lim_{l\rightarrow\infty}L(l)>-l_0;$

$(iii)$\,  $V_*(x):=\lim_{l\to\infty}V_l(x)$ exists and it is the unique positive solution of
\bes\left\{\begin{array}{l}\medskip
\displaystyle dV''+cV'+A(x)V-bV^2=0 \mbox{ for } -\infty<x<L_{*}, \\
\displaystyle V(-\infty)=V(L_{*})=0,\; -\mu V'(L_{*})=c.
\end{array}\right.\lbl{V-L_*}
\ees
\end{lem}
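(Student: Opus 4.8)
\textbf{Proof proposal for Lemma \ref{lem3}.}

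The plan is to establish the three parts in order, using the already-constructed object $(L(0), V_0)$ as the anchor point $l=0$ and building $V_l$ for $l>0$ by a continuity/monotonicity argument in the parameter $L$. For part $(i)$, fix $l>0$ and consider the map $L\mapsto \lambda_1[-l,L]$. Since enlarging the domain strictly decreases the principal eigenvalue, this map is continuous and strictly decreasing in $L$; moreover $\lambda_1[-l,L]\to+\infty$ as $L\downarrow -l$ and (because $A\equiv a>0$ on $[0,\infty)$ and $c<c_0\le 2\sqrt{ad}$, so the principal eigenvalue of the shifted operator on a long interval becomes negative) $\lambda_1[-l,L]<0$ for all large $L$. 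Hence there is a threshold $L_0(l)$ beyond which \eqref{V-l-L} has a unique positive solution $V_{l,L}$; by elliptic regularity and the strong maximum principle, $V_{l,L}$ depends continuously (indeed smoothly, by the implicit function theorem since $V_{l,L}$ is nondegenerate — the linearization has strictly positive principal eigenvalue $-\lambda_1$... actually one checks nondegeneracy via the standard sub/supersolution-sweeping uniqueness) on $L$ in $C^1$ up to the right endpoint. Now study the ``shooting'' quantity $g(L):= -\mu V_{l,L}'(L)-c$. As $L\downarrow L_0(l)$ we have $V_{l,L}\to 0$ uniformly with $V_{l,L}'(L)\to 0$, so $g(L)\to -c<0$; as $L\to\infty$, a sweeping argument with the translates of $w_l$ from Lemma \ref{l-l} (transplanted via the change of variables making the comparison principle of \cite{DM} applicable, as noted after \eqref{V-l-L}) forces $-\mu V_{l,L}'(L)\to \mu\cdot C/\mu_C$-type lower bounds that exceed $c$ — more precisely, since $A\ge a_0$ is replaced near $-l$ by truly negative values but on any fixed right neighborhood of $L$ the solution converges to the semi-wave profile $q_c$ of speed $c$, whose boundary slope satisfies $-\mu q_c'(0)>c$ exactly when $c<c_0$. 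Thus $g(L)>0$ for large $L$, and by continuity there is at least one $L(l)$ with $g(L(l))=0$.

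Uniqueness of $L(l)$ — and monotonicity of $l\mapsto L(l)$ in part $(ii)$ — is where the main work lies, and it is the step I expect to be the principal obstacle. The natural tool is a sliding/comparison argument combined with Angenent's zero-number theorem \cite{Ang} (which the paper announces using in Section 3): if $L_1<L_2$ both solved $g=0$ for the same $l$, compare $V_{l,L_1}$ with $V_{l,L_2}$ shifted, or with $V_{l,L_2}$ directly, and show the difference cannot change sign the required number of times, forcing $V_{l,L_1}\equiv V_{l,L_2}$ up to translation, a contradiction with the distinct boundary conditions. For monotonicity, if $l_1<l_2$ then $V_{l_2,L}$ is a subsolution of the problem on $[-l_1,L]$ (it vanishes at $-l_2<-l_1$, hence is positive and satisfies the equation on the smaller interval, so after truncation serves as a lower barrier), which pushes the admissible threshold and the solution down; tracking the boundary slope then gives $L(l_1)\ge L(l_2)$, and strictness again comes from the strong maximum principle / Hopf lemma applied to the difference. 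Because $L(l)$ is decreasing and bounded below — by $-l_0$, which follows because if $L(l)\le -l_0$ then on the whole interval $A(x)\le a_0\le 0$ and the equation $dV''+cV'+A(x)V-bV^2=0$ would admit no positive solution at all (multiply by $V$ and integrate, using $A\le 0$, to get a contradiction), so in fact $L(l)>-l_0$ uniformly — the limit $L_*:=\lim_{l\to\infty}L(l)\ge -l_0$ exists, and one upgrades $\ge$ to $>$ by the same integration argument applied in the limit: if $L_*=-l_0$ the limiting solution would live on $(-\infty,-l_0]$ where $A\le 0$, impossible.

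For part $(iii)$: the family $\{V_l\}$ is monotone in $l$ (increasing, by the subsolution comparison of $(ii)$) and uniformly bounded by $a/b$ (any constant $\ge a/b$ is a supersolution of \eqref{V-l-L}), so $V_*(x):=\lim_{l\to\infty}V_l(x)$ exists pointwise; interior elliptic estimates give $C^2_{loc}((-\infty,L_*))$ convergence and, using that $L(l)\downarrow L_*$ with the $C^1$-boundary-layer analysis near the free endpoint (as in \eqref{eq4.35} of Lemma \ref{l-l}), $C^1$ convergence up to $L_*$, so $V_*$ solves the ODE on $(-\infty,L_*)$ with $V_*(L_*)=0$ and $-\mu V_*'(L_*)=c$. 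It remains to verify $V_*(-\infty)=0$: since $A(x)=a_0\le 0$ for $x<-l_0$ and $V_*$ is bounded, a standard sliding comparison with the solutions of $-dw''-cw'=a_0 w - bw^2$ on half-lines (all of which decay, because $a_0\le 0$ forces the only bounded nonnegative solution tending to a limit to be $0$) pins down $V_*(x)\to 0$ as $x\to-\infty$; positivity of $V_*$ throughout $(-\infty,L_*)$ follows from $V_*\ge V_{l}>0$ on each $(-l,L(l))$. Uniqueness of the solution of \eqref{V-L_*} is then the same zero-number / sliding argument as for uniqueness of $L(l)$: two solutions would have to coincide after translation, but the common normalization $-\mu V'(L_*)=c$ and $V(L_*)=0$ leaves no room for a nontrivial shift, so they are identical. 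This completes the proof. $\qquad\blacksquare$
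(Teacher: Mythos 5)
Your overall architecture is the same as the paper's (shoot in $L$ for each $l$, use sliding comparison for monotonicity, pass to the limit), but there are a couple of places that need repair, and one of them is a genuine gap.

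\textbf{The real gap is in part $(iii)$.} You claim that ``the family $\{V_l\}$ is monotone in $l$ (increasing, by the subsolution comparison of $(ii)$).'' This is not correct, and the argument from $(ii)$ does not give it: as $l$ increases, the interval $(-l,L(l))$ slides in a non-nested way --- its left endpoint moves to $-\infty$ while its right endpoint $L(l)$ \emph{decreases}. If $l_1<l_2$, then on the overlap $(-l_1,L(l_2))$ the boundary data are mixed: $V_{l_1}(-l_1)=0<V_{l_2}(-l_1)$ but $V_{l_2}(L(l_2))=0<V_{l_1}(L(l_2))$, so the comparison principle gives no ordering. The paper avoids this entirely by normalizing to the right endpoint, setting $W_n(x):=V_{l_n}(x+L(l_n))$ on $(-l_n-L(l_n),0]$, and extracting a limit $W_*$ by the uniform $L^\infty$ bound, interior elliptic estimates, and a diagonal argument --- monotonicity is never used. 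You need this (or some other) normalization; as written, ``$V_*(x):=\lim_{l\to\infty}V_l(x)$ exists pointwise'' is unjustified.

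\textbf{Two further, more minor, deviations.} First, for existence of $L(l)$ you argue that $g(L)>0$ for ``large $L$'' via a somewhat vague semi-wave limit; the paper instead fixes $L=L(0)$ and compares $V_{l,L(0)}$ with $V_0$ on $[0,L(0)]$ (comparison + Hopf), getting $-\mu V_{l,L(0)}'(L(0))>c$ directly, which is cleaner and requires no asymptotics. Second, you invoke Angenent's zero-number theorem both for uniqueness of $L(l)$ and for uniqueness of $V_*$. That theorem is a \emph{parabolic} tool and is overkill here; the paper proves uniqueness of $L(l)$ by a purely elliptic sliding comparison (compare $V_{l,L_1}$ with the translate $V_{l,L_2}(\cdot+L_2-L_1)$ and apply Hopf to show strict monotonicity of $L\mapsto V_{l,L}'(L)$), and proves uniqueness of $V_*$ simply by viewing it as the solution of a second-order ODE initial value problem at $x=L_*$ with prescribed $V_*(L_*)=0$, $V_*'(L_*)=-c/\mu$. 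Your integration-by-parts argument for $L(l)>-l_0$ and for $L_*>-l_0$ is fine in principle, though for the latter you should note that the integration on the half-line $(-\infty,L_*]$ needs the decay of $V_*$ at $-\infty$, which is being proved in the same breath --- the paper sidesteps this circularity by showing instead that $e^{(c/d)x}V_*'(x)$ is increasing on $(-\infty,-l_0]$, hence $V_*$ is increasing there, which is incompatible with $V_*(L_*)=0$ unless $L_*>-l_0$.
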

\begin{proof}\,
For any $l>0$, since $\lambda_1[-l,L(0)]<\lambda_1[0, L(0)]<0$, \eqref{V-l-L} with $L=L(0)$ has a unique positive solution  $V_{l,L(0)}$.
By the comparison principle and the Hopf boundary lemma, we  have
\[
V_0(x)<V_{l,L(0)}(x)\ \mbox{for}\ x\in[0,L(0)) \mbox{ and } V'_{l, L(0))}(L(0))<V_0'(L(0)).
\]
Hence
\[
-\mu V_{l,L(0)}'(L(0))>c.
\]

Since $A(x-l)\leq,\not\equiv A(x)$ for $x\in [0, L(0)]$, we have 
\[
\lambda_1[-l, L(0)-l]>\lambda_1[0, L(0)].
\]
If $\lambda_1[-l, L(0)-l]<0$, then \eqref{V-l-L}  with $L=L(0)-l$ has a unique positive solution $V_{l, L(0)-l}$, and we can use the comparison principle and the Hopf boundary lemma to deduce
\[
V_{l, L(0)-l}(x-l)<V_0(x) \mbox{ for } x\in (0, L(0)),\; V'_{l, L(0)-l}(L(0)-l)>V_0'(L(0)).
\]
Hence
\[
-\mu V'_{l, L(0)-l}(L(0)-l)<c.
\]

If $\lambda_1[-l, L(0)-l]\geq 0$, then we can find a unique $L'\in[ L(0)-l, L(0))$ such that $\lambda_1[-l, L']=0$.
Thus for $L\in (L', L(0))$ we have $\lambda_1[-l, L]<0$ and \eqref{V-l-L} has a unique positive solution $V_{l,L}$.
Moreover, a well-known property of the logistic type equation indicates that $\lim_{L\to L'}\|V_{l,L}\|_{C^2[-l, L]}= 0$.
Hence $-\mu V'_{l, L}(L)<c$ for $L>L'$ but close to $L'$.

Therefore whether $\lambda_1[-l, L(0)-l]< 0$ or $\lambda_1[-l, L(0)-l]\geq 0$, we can always find some $L\in (-l, L(0))$ such that
$-\mu V_{l, L}'(L)<c$. Since $-\mu V_{l,L(0)}'(L(0))>c$, by the continuous dependence of $V'_{l, L}(L)$ on $L$, there exists
$L(l)\in (L, L(0))$ such that 
\[
-\mu V'_{l, L(l)}(L(l))=c.
\]
Moreover, for $L'<L_1<L_2<L(0)$, we can compare $V_{l,L_1}(x)$ with $V_{l, L_2}(x+L_2-L_1)$ over $x\in [-l, L_1]$, and use the comparison principle and Hopf Lemma, to deduce that $ V'_{l,L_1}(L_1)>V'_{l, L_2}(L_2)$.
This implies that $L(l)$ is uniquely determined. The proof of conclusion (i) is now complete.

We next prove the first part of (ii). For convenience, we denote $V_{l}=V_{l,L(l)}$. 
Arguing indirectly, we assume that there are
$l_1>l_2\geq 0$ satisfying $L_1:=L(l_1)\geq L_2:=L(l_2)$. Denote $V_1(x)=V_{l_1}(x+L_1-L_2)$, then
\bess\left\{\begin{array}{ll}\medskip
\displaystyle -dV_1''-cV_1'\geq A(x)V_1-bV_1^2,\ \ & -l_2<x<L_2, \\
\displaystyle V_1(-l_2)>0, V_1(L_2)=0.\\
\end{array}\right.
\eess
By the comparison principle we have
\[V_1(x)>V_{l_2}(x) \ \mbox{for}\ x\in[-l_2,L_2).
\]
By the Hopf lemma,
$V_1'(L_2)<V_{l_2}'(L_2)=-c/\mu$, which contradicts $V_1'(L_2)=V'_{l_1}(L_1)=-c/\mu$. The first part of conclusion (ii) is now proved.

To prove the second part of (ii) and conclusion (iii),
 let $\{l_n\}$ be an increasing sequence that converges to $\infty$. Denote $L_n:=L(l_n)$ and $W_n(x):=V_{l_n}(x+L_n)$;
 then
\bess\left\{\begin{array}{ll}\medskip
\displaystyle dW_n''+cW_n'+A(x+L_n)W_n-bW_n^2=0,\ \ & -l_n-L_n<x<0, \\
\medskip\displaystyle W_n(-l_n-L_n)=W_n(0)=0,\;
 W_n'(0)=-c/\mu.
\end{array}\right.
\eess

We first observe that for any $l>0$, $L(l)>-l_0$. Otherwise $L(l)\leq -l_0$ for some $l>0$. It follows that $A(x)\equiv a_0\leq 0$ for $x\in [-l, L(l)]$. If the maximum of $V_l$ in $[-l, L(l)]$ is attained at $x_0\in(-l, L(l))$, then we arrive at the following contradiction:
\[
0=dV_l''(x_0)+c V_l'(x_0)+a_0 V_l(x_0)-b V_l^2(x_0)\leq -b V_l^2(x_0)<0.
\]
Hence we always have $L(l)>-l_0$ and so $L_*:=\lim_{n\to\infty}L_n\geq -l_0$.

Since $0\leq W_n(x)\leq a/b$ for all $x\in[-l_n-L_n,0]$, by regularity arguments of elliptic equations and a standard diagonal process, there is a subsequence of $\{W_n\}$, for convenience, still denoted by itself, such that
\[
W_n\rightarrow W_* \ \mbox{in}\ C^1_{loc}((-\infty,0]),\ \ a/b\geq W_*(x)\geq 0\ \mbox{for}\ x<0
\]
and
\bess\left\{\begin{array}{ll}\medskip
\displaystyle dW_*''+cW_*'+A(x+L_{*})W_*-bW_*^2=0 \mbox{ for }  x<0, \\
\displaystyle W_*'(0)=-c/\mu,\ \ W_*(0)=0.
\end{array}\right.
\eess
Since $W_*'(0)=-c/\mu<0$, by the strong maximum principle, we necessarily
have $W_*(x)>0$ for $x\in (-\infty, 0)$.
Define $V_*(x):=W_*(x-L_{*})$; then $V_*(x)$ satisfies
\bess\left\{\begin{array}{ll}\medskip
\displaystyle dV_*''+cV_*'+A(x)V_*-bV_*^2=0,\; V_*>0 \mbox{ for }  x<L_{*}, \\
\displaystyle V_*'(L_{*})=-c/\mu,\ \ V_*(L_{*})=0.
\end{array}\right.
\eess

We claim that $V_*(x)\rightarrow 0$ as $x\rightarrow-\infty$.
Obviously, by regularity arguments of elliptic equations and the definition of $A$, $V_*\in C^2((-\infty,L_*])$ and
\bes\lbl{x<-l_0}
dV_*''+cV'_*\geq bV_*^2 \ \ \mbox{for}\  x\leq -l_0.
\ees
Therefore
\[
(e^{\frac{c}{d}x}V_*')'\geq \frac{b}{d}e^{\frac{c}{d}x}V^2_*>0\ \ \mbox{for}\ x\leq -l_0.
\]
Hence, $e^{\frac{c}{d}x}V_*'(x)$ is an increasing function in $(-\infty,-l_0]$. Since $V_*$ is a bounded function, there exists $\{x_n\}$ satisfying $x_n\rightarrow -\infty$ such that
\[
V_*'(x_n)\rightarrow 0 \ \mbox{as}\ n\rightarrow\infty.
\]
It follows that
\[
e^{\frac{c}{d}x}V_*'(x)>\lim_{n\rightarrow\infty}e^{\frac{c}{d}x_n}V_*'(x_n)=0\ \mbox{ for every }  x\in(-\infty,-l_0].
\]
Therefore, we have
$V_*'(x)>0$ in $(-\infty,-l_0]$, and  $V_*(x)$ is increasing in $(-\infty,-l_0]$.
This fact and $V_*(L_*)=0$ clearly imply $L_*>-l_0$.

Denote $\tilde m:=\lim_{x\rightarrow-\infty}V_*(x)$; then clearly $\tilde m\geq 0$.
If $\tilde m>0$, then 
\[
\lim_{x\to-\infty}[dV_*''(x)+cV_*'(x)]=\alpha:=-a_0\tilde m+b\tilde m^2>0,
\]
from which we immediately obtain $V_*(x)\to-\infty$ as $x\to-\infty$. 
This contradiction shows that $\tilde m=0$. 

Finally we note that due to conclusion (ii), $L_*=\lim_{l\to+\infty}L(l)$ is uniquely determined. The uniqueness of $V_*$ follows from the uniqueness of initial value problems of second order ODEs, since
$V_*$ can be viewed as the unique solution of the initial value problem
\[dV''+cV'=bV^2-A(x)V,\; V(L_*)=0, \; V'(L_*)=-c/\mu.
\]
The proof is complete.
\end{proof}

\begin{remark}\lbl{l}{\rm Let us observe that for any $l\leq 0$, due to $A(\xi)=a$ for $\xi\geq 0$, $V_l(x):=V_0(x+l)$ is the unique positive solution of \eqref{V-l-L} with $L=L(l):=L(0)-l$ that satisfies $-\mu V_{l}'(L)=c$. From the proof of Lemma \ref{lem3}, it is easily seen that $L(l)\to L(0)$ as $l\to 0^+$. Therefore $L(l)$ is a continuous and strictly decreasing function of $l$ for $l\in\bR^1$, with $L(+\infty)=L_*,\; L(-\infty)=+\infty$.
}
\end{remark}

\begin{remark}\lbl{M} {\rm
Consider the following problem
\bes\left\{\begin{array}{ll}\medskip
\displaystyle dW''+cW'+A(x)W-bW^2=0,\ \ & -l<x<L_{*}, \\
\displaystyle W(-l)=M,\ \ W(L_{*})=0,\\
\end{array}\right.\lbl{M-L-*}
\ees
where $M=\max\{\|u_0\|_\infty,a/b\}$.
By a simple super-subsolution argument, and the comparison principle {\rm (\cite{DM})},
(\ref{M-L-*}) has a unique positive solution $W_l(x)$, $M\geq W_l(x)>V_*(x)$ in $[-l, L_*)$, and $W_l$ is decreasing in $l$.
By the regularity theory of elliptic equations, there exists $\alpha\in (0,1)$ such that, as $l\to+\infty$, 
\[
W_{l}\rightarrow\ W_*\ \mbox{in}\ C_{loc}^{1+\alpha}((-\infty,L_{*}]),
\]
and $W_*$ satisfies
\[
dW_*''+cW_*'+A(x)W_*-b W_*^2=0\ \mbox{in}\ (-\infty,L_{*}]; \ W_*(L_{*})=0.
\]
Similar to the proof of Lemma \ref{lem3}, we can show
$W_*(x)\rightarrow 0$ as $x\rightarrow-\infty$. We may then argue as in the proof of the comparison principle in  \cite{DM} to
deduce $W_*\equiv V_*$. $($So the uniqueness of $V_*$ can also be deduced from $V_*(-\infty)=0$, instead of using $V_*'(L_*)=-c/\mu.)$
}
\end{remark}

The following result will be useful later in the paper.

\begin{lem}\lbl{lem4}
For any given $L<L_{*}$ and $-l<L$, let $W_{l,L}$ denote the unique positive solution of \eqref{M-L-*} with $L_{*}$ replaced by $L$. Then for all sufficiently large $l$,
\[
-\mu W_{l,L}'(L)<c.
\]
\end{lem}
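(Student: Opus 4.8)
The plan is to let $l\to+\infty$ and reduce the statement to a comparison on the half-line $(-\infty,L]$ between the limiting profile of $W_{l,L}$ and a suitable translate of $V_*$. Fix $L<L_*$ with $-l<L$. Exactly as in Remark~\ref{M}, using that $M\ge a/b$ is a supersolution and $0$ a subsolution of the equation in \eqref{M-L-*}, the solution $W_{l,L}$ exists and is unique for every $l$ with $-l<L$, satisfies $0<W_{l,L}\le M$, and, by the comparison principle of \cite{DM}, is monotone decreasing in $l$ on the common domain. Hence $W_{*,L}(x):=\lim_{l\to\infty}W_{l,L}(x)$ is well defined on $(-\infty,L]$, and by the Schauder estimates up to the fixed boundary point $x=L$ the convergence holds in $C^{1+\alpha}_{loc}((-\infty,L])$; in particular $W_{l,L}'(L)\to W_{*,L}'(L)$. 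Thus it suffices to prove the strict inequality $-\mu W_{*,L}'(L)<c$.

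The function $W_{*,L}$ is nonnegative, bounded by $M$, and solves $dW''+cW'+A(x)W-bW^2=0$ on $(-\infty,L]$ with $W_{*,L}(L)=0$. Repeating the argument from the proof of Lemma~\ref{lem3} (for $x\le -l_0$ one has $A\equiv a_0\le 0$, so $(e^{(c/d)x}W_{*,L}')'\ge 0$, $W_{*,L}$ is bounded and monotone near $-\infty$, and a positive limit at $-\infty$ would force unboundedness), we obtain $W_{*,L}(-\infty)=0$. By the strong maximum principle either $W_{*,L}\equiv 0$ or $W_{*,L}>0$ on $(-\infty,L)$. In the first case $W_{l,L}'(L)\to 0$, so $-\mu W_{l,L}'(L)<c$ for all large $l$ and we are done (this in particular covers the degenerate situation $L\le -l_0$, where the argument proving ``$L(l)>-l_0$'' in Lemma~\ref{lem3} forces $W_{*,L}\equiv 0$). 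So assume henceforth $W_{*,L}>0$ on $(-\infty,L)$; then necessarily $L>-l_0$.

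The key step is the comparison. Put $\tau:=L_*-L>0$ and $\tilde V(x):=V_*(x+\tau)$ on $(-\infty,L]$. Since $V_*$ solves \eqref{V-L_*}, we have $\tilde V(L)=V_*(L_*)=0$, $\tilde V(-\infty)=0$, $-\mu\tilde V'(L)=-\mu V_*'(L_*)=c$, and
\[
d\tilde V''+c\tilde V'+A(x)\tilde V-b\tilde V^2=\bigl(A(x)-A(x+\tau)\bigr)\tilde V\le 0 ,
\]
because $A$ is nondecreasing and $\tau>0$; thus $\tilde V$ is a supersolution. Since $W_{*,L}$ is a solution, $\tilde V=W_{*,L}=0$ at $x=L$, and both vanish at $-\infty$, the comparison principle of \cite{DM}, applied on the half-line just as in the corresponding argument of Remark~\ref{M} (using the decay of both functions at $-\infty$), yields $W_{*,L}\le\tilde V$ on $(-\infty,L]$. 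Moreover $W_{*,L}\not\equiv\tilde V$: equality would force $A(x)\equiv A(x+\tau)$ for $x<L$ (as $\tilde V>0$ there), which fails — for $x$ slightly below $-l_0$ one has $A(x)=a_0<A(x+\tau)$ since $x+\tau>-l_0$ and $A$ is strictly increasing on $[-l_0,0]$. Hence $\tilde V-W_{*,L}$ is nonnegative on $(-\infty,L)$, not identically zero, and vanishes at $L$, so the Hopf boundary lemma gives $(\tilde V-W_{*,L})'(L)<0$, that is $-\mu W_{*,L}'(L)<-\mu\tilde V'(L)=c$. Combined with $W_{l,L}'(L)\to W_{*,L}'(L)$, this gives $-\mu W_{l,L}'(L)<c$ for all sufficiently large $l$.

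The one delicate point is the comparison on the unbounded interval in the key step. One cannot run it directly on $[-l,L]$ against a translate of $V_*$, because the left boundary value $M$ of $W_{l,L}$ does not match the value $0$ of such a translate at $-\infty$; this is precisely why one must first pass to the monotone limit $W_{*,L}$, which does satisfy $W_{*,L}(-\infty)=0$. On the half-line the comparison then has to be carried out as in the proof of the comparison principle in \cite{DM}, exploiting that for $x\le -l_0$ the zero-order structure is favourable (the relevant linearization there being $dw''+cw'+a_0w=0$ with $a_0\le 0$). All remaining ingredients — monotonicity in $l$, uniform estimates up to $x=L$, the behaviour at $-\infty$, the strong maximum principle and the Hopf lemma — are routine and have already appeared in Lemma~\ref{lem3} and Remark~\ref{M}.
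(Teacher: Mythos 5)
Your proof is correct and uses essentially the same ingredients as the paper's: comparison on the half-line with the translate $\tilde V(x)=V_*(x-L+L_*)$ (valid because $A$ nondecreasing makes it a supersolution), the fact that the limit function decays to $0$ at $-\infty$, the strong maximum principle, and the Hopf boundary lemma at $x=L$. The only structural difference is cosmetic: the paper argues by contradiction (assuming $-\mu W_{l_n,L}'(L)\ge c$ along a subsequence, showing this forces equality, then ruling out equality), whereas you exploit the monotonicity of $W_{l,L}$ in $l$ to take the full limit and prove the strict inequality $-\mu W_{*,L}'(L)<c$ directly, handling the degenerate case $W_{*,L}\equiv0$ separately — a slightly cleaner packaging of the same argument.
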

\begin{proof}\,
If the conclusion is not ture, then there is $\{l_n\}$ converging to $\infty$, such that
\[
-\mu W_{l_n,L}'(L)\geq c.
\]
By regularity arguments of elliptic equations, there exist $\alpha\in (0,1)$ and a subsequence of $\{l_n\}$, for convenience still denoted it by itself, such that as $n\to\infty$,
\[
W_{l_n,L}\rightarrow\ W_L\ \mbox{in}\ C_{loc}^{1+\alpha}((-\infty,L]).
\]
Then, it is easily seen that
\bes\lbl{W_L-equation}\left\{\begin{array}{ll}\medskip
\displaystyle dW_L''+cW_L'+A(x)W_L-bW_L^2=0,\ \ & -\infty<x<L, \\
\displaystyle W_L(-\infty)=W_L(L)=0,
\end{array}\right.
\ees
and
$
-\mu W_{L}'(L)\geq c$. 

Let $\tilde V_*(x)=V_{*}(x-L+L_{*})$; then from $A(x-L+L_*)\geq A(x)$ we obtain
\[
d\tilde V_*''+c\tilde V_*'+A(x)\tilde V_*-b\tilde V_*^2\leq 0\ \mbox{in}\ (-\infty,L).
\]
Since $\tilde V_*(-\infty)=W_L(-\infty)=0$,  as in the proof of   the comparison principle in \cite{DM}, we 
can show  $\tilde V_*\geq W_{L}$ in $(-\infty, L]$, and hence
\[
-\mu W_L'(L)\leq -\mu \tilde V_*'(L)=-\mu V_*'(L_*)=c.
\]
 So, $-\mu W_L'(L)=c$ holds. As before, we can show that $W_L'(x)>0$ in $(-\infty, -l_0]$, which together with $W_L(L)=0$ implies $L>-l_0$. Then $\tilde V_{*}$ is a strict supersolution of (\ref{W_L-equation}).
By the comparison principle, strong maximum principle and Hopf Lemma, we obtain
$\tilde V_*>W_L$ in $(-\infty, L)$ and $W_L'(L)>\tilde V_*'(L)=V_*'(L_*)$. Hence
\[
-\frac{c}{\mu}=W_L'(L)>V_{*}'(L_{*})=-\frac{c}{\mu}.
\]
This contradiction finishes the proof.
\end{proof}

\section{The Trichotomy}
\setcounter{equation}{0}

In this section, we will prove the following trichotomy result, which clearly implies Theorem \ref{tri}.
\begin{theo}\lbl{three}
Suppose that $c\in(0, c_0)$ and $(u,h)$ is the unique  solution of \eqref{MP}. Then
\begin{itemize}
  \item[(i)] vanishing happens  if $\limsup_{t\rightarrow\infty}[h(t)-ct]<L_{*};$
  \item[(ii)] borderline spreading happens  if $\limsup_{t\rightarrow\infty}[h(t)-ct]=L_{*};$
  \item[(iii)] spreading happens if  $\limsup_{t\rightarrow\infty}[h(t)-ct]>L_{*}.$
\end{itemize}
\end{theo}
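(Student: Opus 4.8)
The plan is to establish the trichotomy in terms of the quantity $\ell_\infty := \limsup_{t\to\infty}[h(t)-ct]$, treating the three cases $\ell_\infty < L_*$, $\ell_\infty = L_*$, $\ell_\infty > L_*$ in turn, and showing in each case that the claimed asymptotic profile from Theorem~\ref{tri} holds. Throughout I would work in the moving frame $\xi = x - ct$, so that $u(t,x) = v(t,\xi)$ satisfies $v_t = dv_{\xi\xi} + cv_\xi + A(\xi)v - bv^2$ on $0-ct < \xi < h(t)-ct =: g(t)$, with $g'(t) = -\mu v_\xi(t,g(t)) - c$. The auxiliary elliptic objects $V_*$, $V_{l,L}$, $W_l$, $W_{l,L}$ built in Section~2, together with the monotone dependence of $L(l)$ on $l$ (Lemma~\ref{lem3}, Remark~\ref{l}) and the sign of $-\mu W_{l,L}'(L)$ relative to $c$ (Lemma~\ref{lem4}), are exactly the comparison templates needed. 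The zero-number theorem of Angenent will be the tool that forces the solution, once trapped between two such steady profiles, to actually converge to one of them rather than oscillate.

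\textbf{Case (i), vanishing.} Suppose $\ell_\infty < L_*$. Pick $L$ with $\ell_\infty < L < L_*$; then for all large $t$ we have $h(t) - ct < L$, i.e. the free boundary in the moving frame stays to the left of $L$. Since $u$ is bounded by $M := \max\{\|u_0\|_\infty, a/b\}$ (Lemma~\ref{estimate}), I would compare $v(t,\cdot)$ on a large fixed interval $[-l, L]$ (with a time shift so that the left endpoint $-l$ is well inside the domain) against the supersolution $W_{l,L}$ of \eqref{M-L-*} with $L_*$ replaced by $L$. By Lemma~\ref{lem4}, for $l$ large, $-\mu W_{l,L}'(L) < c$; this means $W_{l,L}$ generates a supersolution whose own free-boundary velocity is strictly negative, so a standard super-subsolution/sliding argument (as in \cite{DL}) pushes $h(t) - ct$ downward and in fact shows $\limsup_t[h(t)-ct]$ must eventually sit below any such $L$, and iterating (or a direct contradiction argument) gives $h(t) - ct \to -\infty$, hence $h_\infty < \infty$. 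Once $h$ is bounded, the usual argument (the habitat is a bounded interval, on which, accounting for the moving unfavourable zone, the principal eigenvalue is eventually positive) yields $\max_x u(t,x) \to 0$. The main subtlety here is handling the left boundary $x=0$ of the original domain, which in the moving frame recedes to $\xi = -\infty$; I would restrict comparisons to the region $\xi > -l_0$ where $A \equiv a_0 \le 0$ makes $u$ a subsolution of the heat equation, controlling the contribution from the left.

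\textbf{Case (iii), spreading.} Suppose $\ell_\infty > L_*$. The key point is to upgrade $\limsup$ to a genuine linear growth $h(t)/t \to c_0$. First, since $h(t) - ct$ exceeds $L_*$ infinitely often and $L(l) \downarrow L_*$, one can find, for suitable large $l$, a time $t_1$ and a shift so that $v(t_1,\cdot) \ge V_{l,L}$ for an interval $[-l,L]$ with $L > L_*$, whence $-\mu V_{l,L}'(L) > c$ and the comparison subsolution has strictly positive free-boundary velocity; this forces $h(t) - ct \to +\infty$. Then I would use the homogeneous-environment semi-wave $q_{c_0}$ from \eqref{semi-wave}: on the region $\xi \ge \epsilon t$ the environment is the favourable constant $a$, and the free-boundary problem there looks like the one in \cite{DL, BDK}, so a comparison with (shifted) semi-waves $q_{c_0}$ from below and above pins $h(t)/t \to c_0$ and gives $u(t,x) \to a/b$ locally uniformly in the bulk $(c+\epsilon)t \le x \le (1-\epsilon)h(t)$. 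For the left estimate $\sup_{0\le x \le (c-\epsilon)t} u \to 0$, I would note that on $x \le (c-\epsilon)t$ the environment coefficient $A(x-ct) = A(\xi)$ with $\xi \le -\epsilon t \to -\infty$ is eventually $\le a_0 \le 0$, so $u$ there is a subsolution of $u_t = du_{xx} + a_0 u$; combined with the bound $u \le C_1$ and a barrier moving at speed $c - \epsilon$, one gets exponential decay.

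\textbf{Case (ii), borderline spreading.} Suppose $\ell_\infty = L_*$. This is the hard part, and I expect it to be the main obstacle. Neither of the crude comparisons above applies: we must show $h(t) - ct$ actually \emph{converges} to $L_*$ (not merely $\limsup = L_*$ with possible dips), and that $u$ converges to the profile $V_*$. The idea is: (a) from $\limsup[h(t)-ct] = L_*$ and Case~(iii)'s dichotomy-type argument, rule out that the $\limsup$ is ever exceeded in the limit, so $h(t) - ct \le L_* + o(1)$; (b) show $\liminf[h(t)-ct] \ge L_*$ as well — here is where one needs to exclude that the solution slides back toward vanishing. For (b) I would suppose $\liminf < L_*$ along some sequence, extract a limiting (entire in $t$) solution of the limit free-boundary problem, and use the zero-number / sliding argument of Angenent \cite{Ang} comparing with the family $\{V_*(\cdot - L_* + L)\}_L$ to derive that the limit must be a steady state; the steady states of \eqref{L_*} being unique (Lemma~\ref{lem3}) forces the limit to be exactly $(L_*, V_*)$, contradicting $\liminf < L_*$ unless the solution is vanishing — but vanishing would make $\limsup[h(t)-ct] = -\infty \ne L_*$. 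Finally, with $h(t) - ct \to L_*$ established, a last comparison between the sub/supersolutions $V_{l,L(l)}$ (from inside) and $W_l$ from Remark~\ref{M} (from outside), both converging to $V_*$ as $l \to \infty$, squeezes $u(t, \cdot + ct) \to V_*$ uniformly, which in original coordinates is $\max_{0\le x\le h(t)} |u(t,x) - V_*(x - h(t) + L_*)| \to 0$. The delicate technical points are the extraction and regularity of the entire limiting solution near its (moving) free boundary, and the correct deployment of the zero-number theorem to the difference $v(t,\xi) - V_*(\xi - L_* + L)$ — in particular checking that this difference has finitely many zeros for $t>0$ and that a zero cannot be created at the free boundary, which is where the Hopf lemma and the boundary condition $-\mu V_*'(L_*) = c$ enter.
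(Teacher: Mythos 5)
Your overall architecture matches the paper's: work in the moving frame, compare against the auxiliary elliptic profiles $V_{l,L}$, $W_{l,L}$, $V_*$ of Section~2, use Angenent's zero-number theorem to upgrade ``crossing'' information to pointwise ordering, and extract entire-in-$t$ solutions. But there are two substantive gaps, both in the delicate step of upgrading a $\limsup$ statement to a genuine limit, and both places the paper uses an argument your outline does not contain.

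\emph{Case (iii).} You claim that $\limsup_t[h(t)-ct]>L_*$ ``forces $h(t)-ct\to+\infty$'' by placing $V_{l,L}$ beneath the solution and observing $-\mu V'_{l,L}(L)>c$. This does not close. First, knowing $h(t_1)-ct_1>L_*$ at some time does not place the profile $v(t_1,\cdot)$ above $V_{l,L}$ -- obtaining that ordering is exactly the content of the zero-number crossing argument in the paper's Lemma~\ref{prop-infty}, which requires the free boundary to traverse an entire moving interval $[z_1(t),z_2(t)]$. Second, even granting $v(t_1,\cdot)\ge V_{l,L}$, the traveling subsolution $V_{l,L}(x-ct-\text{const})$ only gives $h(t)-ct\ge L$ for $t>t_1$, not $\to\infty$; to produce a contradiction one must pass to a $(c+\delta)$-accelerated version, and the range of $L$ for which all the inequalities are simultaneously available is squeezed between $L_*$ and the putative finite $\limsup$. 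The paper handles the subcase $L_*<\limsup<\infty$ by an entirely different device: Lemma~\ref{H^*>L_c} first upgrades $\limsup$ to $\lim=H^*$ via zero-number counting along the whole trajectory (Lemma~\ref{prop3}, Claim 2), then extracts the entire solution $\hat w$ with $\hat w_x(t,0)\equiv-c/\mu$, and derives a contradiction by trapping below $\hat w$ a parabolic solution on a fixed interval whose steady limit has \emph{strictly} steeper gradient than $-c/\mu$ (Hopf lemma plus the eigenvalue estimate $\lambda_1[-\tilde l,H^*]<0$). Your outline has no analogue of this ``Liouville-type'' contradiction, and the subsolution argument alone cannot replace it.

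\emph{Case (ii).} Here you propose to establish $\liminf_t[h(t)-ct]\ge L_*$ by extracting an entire solution along a sequence realizing the $\liminf$ and then using Angenent to force it to be a steady state. This is circular: the degenerate-zero argument against $V_*(\cdot+L_*)$ (the core of the paper's Theorem~\ref{Transition}) needs the limiting free boundary to sit at a \emph{constant} location with $\hat w_x(t,0)\equiv-c/\mu$ for all $t\in\mathbb{R}$, and that hinges on already knowing $g_n(t)=h(t+t_n)-c(t+t_n)$ converges to a constant, i.e.\ on $\lim_t[h(t)-ct]=L_*$. If $\liminf<\limsup$, the extracted entire solution may have an oscillating free boundary and the degenerate-zero at $x=0$ is not available. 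The paper fills this gap with a separate zero-number argument operating on the full trajectory (Lemma~\ref{L_{*}}): if $h(t)-ct$ oscillates across a fixed level $L<L_*$ infinitely often, the zero count of $u(t,\cdot+ct)-W_{l,L}(\cdot)$ must drop by at least one at each crossing, eventually giving a negative count. Your outline does not have this step, and without it the subsequent convergence to $V_*$ is unsupported.

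A minor point in Case (i): ``$h(t)-ct\to-\infty$ hence $h_\infty<\infty$'' is not a direct implication (e.g.\ $h(t)=\sqrt t$). The correct chain, as in the paper, is $h(t)-ct\to-\infty\Rightarrow u\to0$ (Lemma~\ref{0-ct-M}, since the occupied range eventually sits in the region where $A\equiv a_0\le0$) $\Rightarrow h_\infty<\infty$ (Lemma~\ref{vanishing-h}, via the collapsing barrier $\tilde u(t,x)=me^{-\alpha t}\cos(\pi x/(2\tilde h(t)))$). You invoke the right ingredients but in the wrong logical order.
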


The proof of this theorem will take up the rest of this section, which is divided into four subsections.
 Unless otherwise specified, throughout this section, we always assume that 
\[
0<c<c_0.
\]

\subsection{Properties of $h(t)$}
In this subsection, we prove some important properties of $h(t)$, which form the conner stones in the proof of Theorem \ref{tri}. Our arguments here are based on various comparison techniques, and following \cite{DLZ} and \cite{GLZ}, in several key steps we will
make use of some zero number results derived from Angenent \cite{Ang}.

\begin{lem}\lbl{prop-infty}
If $\limsup\limits_{t\rightarrow\infty}[h(t)-ct]=\infty$, then $\lim\limits_{t\rightarrow\infty}[h(t)-ct]=\infty$.
\end{lem}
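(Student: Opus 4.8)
The statement says that if $h(t)-ct$ is unbounded above along a sequence, then in fact $h(t)-ct\to\infty$. The natural strategy is a contradiction argument combined with a translation/comparison device: suppose $\liminf_{t\to\infty}[h(t)-ct]=:m<\infty$ while $\limsup_{t\to\infty}[h(t)-ct]=\infty$. I would first fix a large level $L$ with $L>\max\{m,L_*,0\}$ and $L>L(0)$ (so that the elliptic problem \eqref{V-l-L} with $l$ large and right endpoint $L$ has the super/subsolution structure from Lemma 2.4 and Lemma 2.5 available). By the oscillation hypothesis there are times $s_1<t_1<s_2<t_2<\cdots\to\infty$ with $h(s_n)-cs_n=L$, $h(t_n)-ct_n$ large, and $h(t)-ct\geq L$ on $[s_n,t_n]$, and also times where $h(t)-ct$ dips back below $L$. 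The idea is to show that once $h(t)-ct$ reaches a sufficiently large value, it can never return to $L$, contradicting the infinitely many descents forced by $m<L<\infty$.

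The mechanism for "no return" should be a subsolution on a moving frame. Pass to the moving coordinate $\xi=x-ct$; then $u$ solves $u_t=du_{\xi\xi}+cu_\xi+A(\xi)u-bu^2$ on $0<\xi+ct<h(t)$, i.e. the right free boundary sits at $\xi=h(t)-ct=:g(t)$. On this frame I would build a compactly supported lower barrier of the form $\underline u(t,\xi)=W_{l,L}(\xi)$ on an interval $[-l,L]$ (in the moving frame), where $W_{l,L}$ is the function from Lemma 2.7 (solution of \eqref{M-L-*} with $L_*$ replaced by $L$), together with a moving right boundary $\underline g(t)$ obeying $\underline g'(t)=-\mu \underline u_\xi(t,\underline g(t))$. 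Because $-\mu W_{l,L}'(L)<c$ for all large $l$ (Lemma 2.7), and the steady profile solves the stationary equation with the correct reaction, a standard free-boundary comparison (as in \cite{DL, DM}) shows that if at some time $\tau$ we have $g(\tau)\geq L$ and $u(\tau,\cdot)\geq \underline u(\tau,\cdot)$ on the overlap (the latter arranged by shifting $\underline u$ far to the left, using $g$ large and the parabolic lower bounds of Lemma 2.3), then $g(t)\geq \underline g(t)$ for all subsequent $t$; but $\underline g'(t)<c$ (strictly, via $-\mu W'_{l,L}(L)<c$ and continuity), so $g(t)-\underline g(t)$ — wait, that gives the wrong direction, so instead I would use the barrier the other way: the relevant comparison should produce a \emph{lower} bound for $g$ that stays $\geq L$. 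Concretely: choose $l$ so large that $-\mu W_{l,L}'(L)<c$; place $\underline u$ with its right endpoint exactly at $L$ and let it be stationary (it is an exact subsolution of the free-boundary problem since $\underline g'\equiv 0$ would require $-\mu W'_{l,L}(L)\geq 0$, which holds as $W'_{l,L}(L)<0$). Actually the clean choice is to let the subsolution's free boundary move with its own law; since its speed $-\mu W_{l,L}'(L)<c$, in the original (non-moving) coordinates its right front moves slower than $ct+L$, so it never forces anything — the useful statement is that $u$ dominates this fixed-in-moving-frame profile forever once it does so once, hence $g(t)$ can never drop below $L$ again. That is exactly "no return," contradicting $m<L$.

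The main obstacle, I expect, is arranging the \emph{initial} domination $u(\tau,\cdot)\geq\underline u(\tau,\cdot)$ on the full interval $[-l,L]$ in the moving frame at the instant $\tau$ when $g$ first becomes large: one needs that $u$ is bounded below by a positive constant on a long sub-interval, which is not immediate from the equation alone when $A$ is partly nonpositive, so this requires the zero-number / parabolic-Harnack type estimates of the kind the authors invoke from Angenent \cite{Ang}, together with Lemma 2.3 and the fact that $g$ large means the population has spread well past $\xi=0$ where $A=a>0$. A secondary technical point is the continuity and strict inequality $-\mu W_{l,L}'(L)<c$ uniformly for all large $l$ — this is Lemma 2.7 and can be quoted directly. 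Once initial domination and the subsolution comparison are in place, the contradiction with the assumed infinitely many returns of $h(t)-ct$ to the level $L$ is immediate, and the lemma follows.
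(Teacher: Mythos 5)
Your proposal has the right general shape (build a lower barrier in the moving frame, show $h(t)-ct$ cannot descend once it has climbed) but the chosen barrier is the wrong one, and the key technical step is only hand-waved.

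\textbf{Wrong barrier.}\ You reach for $W_{l,L}$ from Lemma \ref{lem4}, which satisfies $W_{l,L}(-l)=M=\max\{\|u_0\|_\infty,a/b\}$, $W_{l,L}(L)=0$, and $-\mu W_{l,L}'(L)<c$. That function has two fatal defects as a \emph{lower} barrier: its left boundary value $M$ is (by construction) an upper bound for $u$, so you will never arrange $u\geq W_{l,L}$ near $\xi=-l$; and the slope condition $-\mu W_{l,L}'(L)<c$ is in the wrong direction. In the moving frame $\xi=x-ct$ the free boundary satisfies $g'(t)=-\mu u_\xi(t,g(t))-c$, so a subsolution whose right endpoint is \emph{not to retreat} needs $-\mu V'(\text{right endpoint})\geq c$, not $<c$. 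Your own inline correction (``wait, that gives the wrong direction'') notices the problem but then claims that because the barrier's front moves slower than $ct+L$ it ``never forces anything'' and still prevents $g(t)$ from dropping below $L$: this is false. If the barrier's front lags behind the moving frame, the domination $u\geq\underline u$ only bounds $h(t)-ct$ below by a quantity that decreases in $t$, which is useless. The correct barrier is $V_0$, the solution of \eqref{V-l-L} with $l=0$ and $L=L(0)$: it has $V_0(0)=V_0(L(0))=0$ (so $u$ can dominate it), and $-\mu V_0'(L(0))=c$ exactly, so the profile $w(t,x)=V_0(x-ct-\tilde l)$ is a genuine traveling subsolution whose right front keeps pace with the moving frame and so gives a \emph{stationary} lower bound $h(t)-ct\geq \tilde l+L(0)$ once comparison is in place. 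Since $\tilde l$ is arbitrary, the lemma follows.

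\textbf{Missing initial domination.}\ You flag arranging $u(\tau,\cdot)\geq\underline u(\tau,\cdot)$ as ``the main obstacle'' and propose to settle it with ``zero-number / parabolic-Harnack type estimates'' without giving the mechanism. This is where the actual work lies and where the paper's proof is delicate: one lets $t_1$ be the first time $h(t)$ reaches $z_2(t)=ct+\tilde l+L(0)$ and $t_2<t_1$ a time where $h(t_2)=z_1(t_2)=ct_2+\tilde l$; then tracks the zeros of $\eta(t,\cdot)=u(t,\cdot)-w(t,\cdot)$ on $[z_1(t),h(t)]$ for $t\in(t_2,t_1)$. One shows (via Angenent's zero-number theorem, a unique-continuation argument, and a Hopf-lemma comparison using $h'(t_1)\geq c$) that the single crossing curve $z(t)$ converges to $h(t_1)$ as $t\to t_1^-$, whence $u(t_1,\cdot)>w(t_1,\cdot)$ on all of $[z_1(t_1),h(t_1))$. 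Only then does the free-boundary comparison apply. None of this is present in your write-up, and without it the argument does not close.
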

\begin{proof}\,
 For arbitrarily  given $\tilde l>h_0$, define
\[
I(t)=[z_1(t),z_2(t)]:=[ct+\tilde l,ct+L(0)+\tilde l]
\]
and
\[
w(t,x):=V_0(x-ct-\tilde l),\ \ t>0,\ \ x\in I(t),
\]
where $V_0$ is the unique positive solution of (\ref{V-l-L}) with $l=0$ and $L=L(0)$.
Obviously,
\bes\left\{\begin{array}{ll}\medskip
\displaystyle w_t=dw_{xx}+aw-bw^2,\ \ & t>0,\ \ ct+\tilde l<x<ct+\tilde l+L(0),\\
\medskip\displaystyle w(t,z_1(t))=w(t,z_2(t))=0,\ \ & t>0,\\
\medskip\displaystyle w(0,x)=V_0(x-\tilde l),\ \ & \tilde l\leq x\leq \tilde l+L(0),\\
\displaystyle -\mu w_x(t,z_2(t))=c,\ \ & t>0.
\end{array}\right.\lbl{w(t,x)}
\ees
Since $\limsup_{t\rightarrow\infty}[h(t)-ct]=\infty$, and $h(0)=h_0<\tilde l=z_1(0)<z_2(0)$, and $h(t)$ is continuous, 
 we can find $t>0$ such that $h(t)=z_2(t)$.
Denote the smallest such $t$ by $t_1$. There must exist $t_2\in(0,t_1)$ such that
$$h(t_2)=z_1(t_2)\ \mbox{and} \ z_1(t)<h(t)<z_2(t) \ \mbox{when} \ t\in(t_2,t_1).$$
Obviously,
$$h'(t_1)\geq z_2'(t_1)=c.$$

Denote $\eta(t,x):=u(t,x)-w(t,x)$, $J(t):=[z_1(t),h(t)]$ and let $\mathcal{Z}_{J(t)}(\eta(t,\cdot))$ be the number of zeroes of $\eta(t,\cdot)$ in $J(t)$. Since $\eta(t,z_1(t))=u(t,z_1(t))>0$ and $\eta(t,h(t))=-w(t,h(t))<0$ for $t\in(t_2,t_1)$, $\mathcal{Z}_{J(t)}(\eta(t,\cdot))\geq 1$, and the zero number result of Angenent \cite{Ang} (see Lemma 2.2 in \cite{DLZ} for a convenient version) can be applied.

For all $t>t_2$ that is close to $t_2$, by the Hopf lemma and continuity,
$u_x(t,x)<0$ and $w_x(t,x)>0$ for $x\in J(t)$. This implies $\eta_x(t,x)<0$ for such $t$ and $x\in J(t)$. Therefore, for such $t$, $\eta(t,\cdot)$ has only one zero in $J(t)$, and it is a nondegenerate zero. Since by the zero number result
$\mathcal{Z}_{J(t)}(\eta(t,\cdot))$ is nonincreasing in $t$ for $t\in(t_2,t_1)$, the only possible case is that $\eta(t,\cdot)$ has exactly one zero for every $t\in (t_2, t_1)$;
the zero number result further implies that this zero is nondegenerate. So, the zero of $\eta(t,\cdot)$ in $(t_2,t_1)$ can be expressed as a smooth curve $x=z(t)$.

We claim that $z(t)$ converges to $h(t_1)$ when $t$ increases to $t_1$. Clearly,
$$z_1(t_1)\leq x_*:=\liminf_{t\rightarrow t_1^-}z(t)\ \mbox{and}\ x^*:=\limsup_{t\rightarrow t_1^-}z(t)\leq h(t_1).$$
If $x_*<x^*$, then it is easily seen that $\eta(t_1,x)\equiv 0$ for $x\in[x_*,x^*]$. We may apply Theorem 2 of \cite{F} to $\eta$ over the region
$[t_1-\epsilon,t_1]\times [z_1(t_1+\epsilon),h(t_1-\epsilon)]$, with $\epsilon>0$ sufficiently small, to conclude that $\eta(t_1,x)\equiv 0$ for
$x\in[z_1(t_1+\epsilon),h(t_1-\epsilon)]$. Letting $\epsilon\rightarrow 0$, we have
$\eta(t_1,x)\equiv 0$ for $x\in[z_1(t_1),h(t_1)]$, which contradicts $\eta(t_1,z_1(t_1))>0$.
Therefore $z(t_1):=\lim_{t\rightarrow t_1^-}z(t)$ exists.

By way of contradiction, we assume $z(t_1)<h(t_1)=ct_1+L(0)+\tilde l$.
Consider $\eta(t,x)$ in the domain
$$\{(t,x):z(t)<x<h(t),\ t_2<t\leq t_1\}.$$
By the maximum principle and Hopf lemma, we have
$$\eta(t_1,x)<0 \ \mbox{for} \ x\in(z(t_1),h(t_1)),\ \ \eta(t_1,h(t_1))=0 \ \mbox{and}\ \eta_x(t_1,h(t_1))>0.$$
The last inequality implies that
$$
-\mu u_x(t_1,h(t_1))<-\mu w_x(t_1,ct_1+L(0)+\tilde l)=-\mu V_0'(L(0)),
$$
that is
$$
h'(t_1)<c,
$$
which is in contradiction with our earlier inequality $h'(t_1)\geq c$.
This proves  $z(t_1)=h(t_1)$.

 We may now use the maximum principle to $\eta(t,x)$ over
$\{(t,x): t_2<t\leq t_1, z_1(t)<x<z(t)\}$ to deduce
$$u(t_1,x)>w(t_1,x)\ \ \mbox{for} \ x\in[ct_1+\tilde l,ct_1+\tilde l+L(0)).$$
Hence we can easily see,  by the comparison principle for free boundary problems (see \cite{DL}), that
$$u(t+t_1,x)\geq w(t+t_1,x)\ \ \mbox{for}\ t>0\ \mbox{and} \; x\in (z_1(t+t_1), z_2(t+t_1)),$$
and
$$h(t+t_1)\geq z_2(t+t_1)\ \ \mbox{for all}\ t>0.$$
So for any $t>t_1$,
$$h(t)-ct\geq \tilde l+L(0).$$
Since $\tilde l$ can be arbitrarily large, this implies $\lim_{t\rightarrow\infty}[h(t)-ct]=\infty$.
\end{proof}

\begin{lem}\lbl{speed}
If $\limsup\limits_{t\rightarrow\infty}[h(t)-ct]=\infty$, then $\lim\limits_{t\rightarrow\infty}\frac{h(t)}{t}=c_0$.
\end{lem}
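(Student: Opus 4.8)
The plan is to bound $h(t)/t$ from above and below, each time by $c_0$. By Lemma~\ref{prop-infty} the hypothesis is equivalent to $\lim_{t\to\infty}[h(t)-ct]=\infty$, which I use throughout; in particular $h(t)\to\infty$ since $c>0$. For the upper bound I would compare with the homogeneous problem: let $(\bar u,\bar h)$ solve \eqref{MP} with $A(x-ct)$ replaced by the constant $a$. Since $A\le a$ by \eqref{a}, $(\bar u,\bar h)$ is a supersolution of \eqref{MP}, so $h\le\bar h$ by the comparison principle for free boundary problems. As $h\to\infty$ forces $\bar h\to\infty$, the homogeneous problem spreads, hence $\bar h(t)/t\to c_0$ by \cite{DL,BDK,DMZ}; therefore $\limsup_{t\to\infty}h(t)/t\le c_0$.

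For $\liminf_{t\to\infty}h(t)/t\ge c_0$ the idea is to trap $u$ from below, on the favourable part $\{x>ct\}$ of its range, by the solution of a homogeneous free boundary problem carried by a Dirichlet boundary travelling at speed $c$. Because $h(t)-ct\to\infty$, at a large time $t_0$ the interval $[ct_0,h(t_0)]$ is long, lies entirely in $\{A\equiv a\}$, and $u(t_0,\cdot)>0$ on $(ct_0,h(t_0))$. I would let $(\zeta,g)$ solve $\zeta_t=d\zeta_{xx}+a\zeta-b\zeta^2$ on $ct<x<g(t)$, with $\zeta(t,ct)=0$, $g'(t)=-\mu\zeta_x(t,g(t))$, and initial data at $t_0$ an admissible free-boundary profile below $u(t_0,\cdot)$, supported on a subinterval of $[ct_0,h(t_0)]$ whose length exceeds a suitable critical length (finite since $c<c_0<2\sqrt{ad}$). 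Since $A(x-ct)=a$ on $x>ct$, the comparison principle gives $\zeta\le u$ and $g\le h$ for $t\ge t_0$. In the moving coordinate $y=x-ct$ this becomes the homogeneous Fisher--KPP free boundary problem with advection $c\zeta_y$ and a Dirichlet condition at $y=0$ -- exactly the type analysed in \cite{GLZ}, whose steady behaviour near $y=0$ is governed by the elliptic quantities $V_0,L(0)$; because the initial domain is long and the datum positive, it spreads, i.e. $g(t)-ct\to\infty$, and $\zeta(t,ct+y)\to Z(y)$ locally uniformly in $y\ge0$, where $Z$ is the unique bounded positive solution of $dZ''+cZ'+aZ-bZ^2=0$ on $[0,\infty)$ with $Z(0)=0$ and $Z(+\infty)=a/b$ (it exists and is increasing since $c<2\sqrt{ad}$).

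Next I would fix $c_1\in(c,c_0)$ and apply Lemma~\ref{l-l} with $C=c_1$: since $c_0(\mu_{c_1})=c_1<c_0=c_0(\mu)$ and $\mu\mapsto c_0(\mu)$ is increasing, $\mu_{c_1}<\mu$, so $-\mu w_l'(l)\to\mu c_1/\mu_{c_1}>c_1$; fix $l$ large and $\delta>0$ small so that $-\mu(1-\delta)w_l'(l)>c_1$, and recall $w_l\le a/b$. Choosing $\rho$ with $Z(\rho)>(1-\delta)a/b$, the convergence above gives, for $t_1$ large, $u(t_1,ct_1+y)\ge\zeta(t_1,ct_1+y)\ge(1-\delta)a/b\ge(1-\delta)w_l(y-\rho-l)$ for $y\in[\rho,\rho+2l]$. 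Then $\underline u(t,x):=(1-\delta)w_l\big(x-c_1(t-t_1)-ct_1-\rho-l\big)$ on the interval of width $2l$ with left endpoint $\underline z(t)=c_1(t-t_1)+ct_1+\rho$ and right endpoint $\underline h(t)=\underline z(t)+2l$ is, by a short computation from the equation for $w_l$, a subsolution of \eqref{MP}: $\underline u_t-d\underline u_{xx}-a\underline u+b\underline u^2=-b\delta(1-\delta)w_l^2\le0$, $\underline z(t)\ge ct$ for $t\ge t_1$ (as $c_1>c$, $\rho>0$) so $A(x-ct)=a$ on its support, $\underline u$ vanishes at $\underline z(t),\underline h(t)$, $\underline z'(t)=c_1>0$, and $\underline h'(t)=c_1\le-\mu\underline u_x(t,\underline h(t))$; and at $t=t_1$ we have $\underline h(t_1)\le h(t_1)$ and $\underline u(t_1,\cdot)\le u(t_1,\cdot)$ on the support. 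The comparison principle for free boundary problems then yields $h(t)\ge\underline h(t)=c_1(t-t_1)+ct_1+\rho+2l$ for $t\ge t_1$, hence $\liminf_{t\to\infty}h(t)/t\ge c_1$; letting $c_1\uparrow c_0$ gives $\liminf_{t\to\infty}h(t)/t\ge c_0$, and combined with the upper bound, $h(t)/t\to c_0$.

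The main obstacle is the second paragraph: proving that the advective free boundary problem genuinely spreads, and that spreading forces $\zeta(t,\cdot+ct)\to Z$ with $Z$ existing, unique and monotone. The spreading should follow, as for homogeneous free boundary problems in \cite{DL,GLZ}, from the subcriticality $c<2\sqrt{ad}$ -- which makes the Dirichlet principal eigenvalue on a sufficiently long interval negative, so any positive datum on the long domain $[ct_0,h(t_0)]$ is driven above the critical steady state -- while the convergence is the advective counterpart of the ``spreading implies stabilisation'' results there. An alternative, if \cite{GLZ} provides it in the required form, is to quote its spreading-speed theorem directly: the relevant semi-wave is precisely \eqref{semi-wave}, so the free boundary of $(\zeta,g)$ advances at speed $c_0-c$ in the moving frame, i.e. $h(t)/t\to c_0$, bypassing the explicit subsolution of the third paragraph.
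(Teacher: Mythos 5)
Your skeleton is the same as the paper's: upper bound by comparison with the homogeneous problem via \cite{DL}, then for the lower bound push $u$ above $a/b-\epsilon$ on a bounded window in the moving frame $y=x-ct$, and from there drive the free boundary with a travelling subsolution at any speed $c_1\in(c,c_0)$. Your third paragraph is sound and is a clean variant of the paper's: you use $(1-\delta)w_l$ from Lemma~\ref{l-l} as the subsolution (a short computation gives $\underline u_t - d\underline u_{xx}-a\underline u+b\underline u^2 = -b\delta(1-\delta)w_l^2\le 0$ and $-\mu\underline u_x=-\mu(1-\delta)w_l'(l)>c_1=\underline h'$), whereas the paper uses the exact profile $(\tilde L,\tilde V)$ from the phase plane of \cite{GLZ} (for which equality holds). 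Either one works; yours has the small merit of resting on a lemma the paper actually proves.

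The genuine gap is in your second paragraph, and it is larger than you allow. To get the pointwise lower bound $u(t_1,ct_1+y)\ge(1-\delta)a/b$ on $[\rho,\rho+2l]$ you introduce an auxiliary free boundary problem $(\zeta,g)$ with \emph{moving} Dirichlet boundary at $x=ct$ and assert that it spreads and that $\zeta(t,ct+\cdot)\to Z$. This is a long-time convergence theorem for a nonstandard free boundary problem and is never established. Moreover, the alternative you suggest—quoting the spreading-speed theorem of \cite{GLZ}—does not apply directly: after the change of variables $y=x-ct$ the Stefan condition becomes $\tilde g'(t)=-\mu\tilde\zeta_y(t,\tilde g(t))-c$, which carries an extra $-c$ shift and is not the form treated there. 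The paper avoids this entirely: since $h(t)-ct\to\infty$, for large $T_1$ the interval $[0,2\tilde l]$ sits inside the domain in moving coordinates, and one simply compares $v(t,y)=u(t,y+ct)$ with the solution of the \emph{fixed-interval} Dirichlet problem $\psi_t=d\psi_{yy}+c\psi_y+a\psi-b\psi^2$ on $(0,2\tilde l)$, which converges to its unique positive steady state $\psi_*$ by classical logistic-equation theory; choosing $\tilde l$ large (possible because $c<c_0<2\sqrt{ad}$) makes $\psi_*>a/b-\epsilon/2$ on the middle half, and the required pointwise bound on $u$ follows. So the repair is not to fill the convergence claim for $(\zeta,g)$, but to discard the auxiliary free boundary problem and use a fixed-interval Dirichlet comparison as the paper does.
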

\begin{proof}\,
Since $A(x-ct)\leq a$, by the comparison principle and estimate of spreading speed in \cite{DL}, we have
$$
\limsup_{t\rightarrow\infty}\frac{h(t)}{t}\leq c_0.
$$
Therefore, for our aim here, it suffices to show that
\bes
\lbl{inf}
\liminf_{t\rightarrow\infty}\frac{h(t)}{t}\geq\tilde c\;\; \ \ \forall\tilde c\in(c, c_0).
\ees

We now set to prove (\ref{inf}). For any given $\tilde c\in(c, c_0)$, as in Section 2 above, by the phase-plane analysis in \cite{GLZ}, there exists a unique pair $(\tilde L,\tilde V(x))$  with $\tilde V(x)>0$ in $(0, \tilde L)$
such that \eqref{V-l-L} is satisfied with $l=0$ and $(c,L,V)=(\tilde c, \tilde L,
\tilde V)$.  By the strong maximum principle, there exists $\epsilon>0$ such that
\bes
\lbl{ab-epsilon} 
\tilde V(x)\leq \frac{a}{b}-\epsilon\ \ \mbox{for} \ 0 \leq x\leq \tilde L.
\ees
By Lemma \ref{l-l}, for all sufficiently large $l$,
\bess
\left\{\begin{array}{ll}\medskip
\displaystyle dw''+cw'+aw-bw^2=0,\ \ &\ \ -l<x<l,\\
\displaystyle w(-l)=w(l)=0
\end{array}\right.
\eess
has a unique positive solution $w_l$, and  $w_l\rightarrow a/b$ uniformly in any compact subset of $(-\infty,\infty)$. So, there exists $\tilde l>\tilde L$ such that 
\bess
w_{\tilde l}(x)>\frac{a}{b}-\epsilon/2\ \ \mbox{for} \ -\frac{\tilde L}{2}\leq x\leq\frac{\tilde L}{2}.
\eess
Denote $\psi_*(x)=w_{\tilde l}(x-\tilde l)$; then
\bess
\left\{\begin{array}{ll}\medskip
\displaystyle d\psi''_*+c\psi'_*+a\psi_*-b\psi_*^2=0,\ \ &\ \ 0<x<2\tilde l,\\
\displaystyle \psi_*(0)=\psi_*(2\tilde l)=0
\end{array}\right.
\eess
and
\bes\lbl{psi_*}
\psi_*(x)>\frac{a}{b}-\epsilon/2 \ \ \mbox{for} \ \tilde l-\frac{\tilde L}{2}\leq x\leq \tilde l+\frac{\tilde L}{2}.
\ees
For any $\psi_0\in C^1([0,2\tilde l\,])$ satisfying $\psi_0(x)>0$ for $x\in(0,2\tilde l\,)$, and $\psi_0(0)=\psi_0(2\tilde l\,)=0$,  the 
auxiliary initial boundary value problem
\bes\lbl{psi-equation}\left\{\begin{array}{ll}\medskip
\displaystyle \psi_t=d\psi_{xx}+c\psi_x+a\psi-b\psi^2,\ \ & t>0,\ \ 0<x<2\tilde l,\\
\medskip\displaystyle \psi(t,0)=\psi(t,2\tilde l\,)=0,\ \ & t>0,\\
\displaystyle \psi(0,x)=\psi_0(x),\ \ & 0\leq x\leq 2\tilde l
\end{array}\right.
\ees
has a unique positive solution $\psi(t,x;\psi_0)$, and it is well known that
$$\psi(t,x;\psi_0)\rightarrow\psi_*(x)\ \mbox{uniformly  as} \ t\rightarrow\infty.$$
By (\ref{psi_*}), there exists $T=T(\psi_0)$ such that when $t\geq T$,
\bes
\lbl{psi-epsilon}\psi(t,x;\psi_0)>\frac{a}{b}-\epsilon \ \ \mbox{for} \ \tilde l-\frac{\tilde L}{2}\leq x\leq \tilde l+\frac{\tilde L}{2}.
\ees

Now, denote
$v(t,x):=u(t,x+ct)$ and $g(t):=h(t)-ct$. Due to $\limsup_{t\rightarrow\infty}[h(t)-ct]=\infty$ and 
Lemma \ref{prop-infty}, there exists $T_1>0$ such that
$g(t)>2\tilde l$ for all $t\geq T_1$.
We also have
\bess\left\{\begin{array}{ll}\medskip
\displaystyle v_t=dv_{xx}+cv_x+av-bv^2,\ \ & t>T_1,\ \ 0<x<g(t),\\
\medskip\displaystyle v(t,0)>0, \ \ v(t,g(t))=0,\ \ & t>T_1,\\
\displaystyle v(T_1,x)=u(T_1,x+cT_1),\ \ & 0\leq x\leq g(T_1).
\end{array}\right.
\eess
Therefore if we have chosen $\psi_0$ in (\ref{psi-equation}) satisfying $0<\psi_0(x)\leq u(T_1,x+cT_1)$ for $0<x<2\tilde l$, then by the comparison principle,
$$\psi(t,x;\psi_0)<v(t+T_1,x)\ \ \mbox{for} \ t>0\ \mbox{and} \ 0<x<2\tilde l.$$
By virtue of (\ref{psi-epsilon}), we have
\bes\lbl{v-epsilon}
v(T_1+T,x)>\frac{a}{b}-\epsilon \ \ \mbox{when} \ \tilde l-\frac{\tilde L}{2}<x<\tilde l+\frac{\tilde L}{2}.
\ees
Denote $T_0=T+T_1$, and we obtain, from (\ref{v-epsilon}), 
$$u(T_0,x)>\frac{a}{b}-\epsilon \ \ \mbox{for} \ \tilde l+cT_0-\frac{\tilde L}{2}<x<\tilde l+cT_0+\frac{\tilde L}{2}.$$
Now we set
$$\underline u(t,x):=\tilde V(x-\tilde ct-\tilde l-cT_0+\frac{\tilde L}{2}),$$
$$\xi_1(t):=\tilde ct+\tilde l+cT_0-\frac{\tilde L}{2},\ \ \xi_2(t):=\tilde ct+\tilde l+cT_0+\frac{\tilde L}{2}.$$
Clearly
$$\underline u_t=d\underline u_{xx}+a\underline u-b\underline u^2\ \ \mbox{for} \ t>0,\ \xi_1(t)<x<\xi_2(t),$$
$$\underline u(t,\xi_1(t))=\underline u(t,\xi_2(t))=0,$$
$$-\mu\underline u_x(t,\xi_2(t))=\tilde c=\xi_2'(t),$$
$$\xi_1(0)>cT_0,\ \ \xi_2(0)<h(T_0),$$
and
$$\underline u(0,x)=\tilde V(x-\tilde l-cT_0+\frac{\tilde L}{2})\leq \frac ab-\epsilon<u(T_0,x)\ \ \mbox{when}\ \xi_1(0)\leq x\leq\xi_2(0).$$
By the comparison principle,
\bes\lbl{u>}
u(t+T_0,x)\geq \underline u(t,x)\ \mbox{for}\ t\geq 0, \ \ x\in[\xi_1(t),\xi_2(t)], 
\ees
and
\[
h(t+T_0)\geq \xi_2(t)=\tilde ct+\tilde l+cT_0+\frac{\tilde L}{2} \ \mbox{for all} \ t>0,
\]
which implies (\ref{inf}).
\end{proof}

\begin{lem}\lbl{prop3}\,
If $\lim\sup_{t\rightarrow\infty}[h(t)-ct]=H^*$ and $L_{*}<H^*<\infty$, then $\lim\limits_{t\rightarrow\infty}[h(t)-ct]=H^*$.
\end{lem}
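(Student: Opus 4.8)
The plan is to reuse the zero--number comparison of the proof of Lemma \ref{prop-infty}, but to replace the single steady state $V_0$ by the family of critical--speed steady states $V_{l,L(l)}$ of \eqref{V-l-L} provided by Lemma \ref{lem3} and Remark \ref{l}. Recall from Remark \ref{l} that $l\mapsto L(l)$ is a continuous, strictly decreasing bijection of $\bR^1$ onto $(L_*,+\infty)$, so for each $\Lambda\in(L_*,H^*)$ there is a unique $l_\Lambda$ with $L(l_\Lambda)=\Lambda$ and an associated positive solution $V_{l_\Lambda}$ of \eqref{V-l-L} on $(-l_\Lambda,\Lambda)$ satisfying $-\mu V_{l_\Lambda}'(\Lambda)=c$. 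Since $A$ is nondecreasing, for every constant $\beta\ge 0$ the function
\[
 w(t,x):=V_{l_\Lambda}(x-ct-\beta),\qquad x\in[z_1(t),z_2(t)]:=[\,ct+\beta-l_\Lambda,\ ct+\beta+\Lambda\,],
\]
is a subsolution of the equation in \eqref{MP} (on its support $A(x-ct)\ge A(x-ct-\beta)$), its right endpoint moves at speed $z_2'(t)=c$ with $-\mu w_x(t,z_2(t))=c$, and at the left endpoint $w(t,z_1(t))=0$, $w_x(t,z_1(t))>0$.

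I would first establish that $\liminf_{t\to\infty}[h(t)-ct]\ge\Lambda$ for every $\Lambda\in(L_*,H^*)$; together with the hypothesis $\limsup_{t\to\infty}[h(t)-ct]=H^*$ this gives $\lim_{t\to\infty}[h(t)-ct]=H^*$. Fixing such a $\Lambda$ and supposing $\liminf[h(t)-ct]<\Lambda$, I would choose $\beta\in[0,H^*-\Lambda)$ and a large time $\tau$ at which the graph of $h$ lies strictly behind the support of $w$, i.e. $h(\tau)<z_1(\tau)$ (possible since $h(t)-ct$ returns arbitrarily close to its $\liminf$); because $\limsup[h(t)-ct]=H^*>\beta+\Lambda$ there is a first time $t_1>\tau$ with $h(t_1)=z_2(t_1)$, and, as in Lemma \ref{prop-infty}, a last $t_2\in(\tau,t_1)$ with $h(t_2)=z_1(t_2)$, so $z_1(t)<h(t)<z_2(t)$ on $(t_2,t_1)$ and $h'(t_1)\ge z_2'(t_1)=c$. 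Applying the zero--number result of Angenent \cite{Ang} to $\eta(t,x)=u(t,x)-w(t,x)$ on $J(t)=[z_1(t),h(t)]$ exactly as in Lemma \ref{prop-infty}---the count equals $1$ for $t$ slightly above $t_2$ by monotonicity of $\eta$ there, hence stays $1$---the unique zero curve $z(t)$ of $\eta$ must reach $h(t_1)$; otherwise $\eta(t_1,\cdot)<0$ just to the left of $h(t_1)$ with $\eta_x(t_1,h(t_1))>0$, forcing $h'(t_1)<c$, a contradiction. Thus $u(t_1,\cdot)>w(t_1,\cdot)$ on $[z_1(t_1),z_2(t_1))$, and the comparison principle for free boundary problems (\cite{DL}) yields $h(t)\ge z_2(t)$, i.e. $h(t)-ct\ge\beta+\Lambda\ge\Lambda$, for all $t\ge t_1$---contradicting $\liminf[h(t)-ct]<\Lambda$.

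The delicate point, which I expect to be the main obstacle, is making the above work for $\Lambda$ arbitrarily close to $H^*$: the requirements $h(\tau)<z_1(\tau)=c\tau+\beta-l_\Lambda$ and $\beta<H^*-\Lambda$ together force $l_\Lambda<H^*-\Lambda-\liminf[h(t)-ct]$, which is not automatic once $l_\Lambda$ is large. I would address this by a bootstrap: the argument as stated is immediate when $\liminf[h(t)-ct]$ is negative enough, and in general a moderate choice of $\Lambda$ yields a provisional lower bound $\Lambda_0$ for $\liminf[h(t)-ct]$, after which one iterates with larger $\Lambda$ (for which $l_\Lambda<H^*-\Lambda-\Lambda_0$ now holds), driving the lower bound up to $H^*$. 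Should this fail to close the gap, the fallback is a rigidity argument: from times $\tau_n\to\infty$ with $h(\tau_n)-c\tau_n\to H^*$, extract via parabolic estimates an entire-in-time limiting solution $(\bar u,\bar h)$ on $\{x<\bar h(t)\}$ with $\bar g(t):=\bar h(t)-ct\le H^*$ and $\bar g(0)=H^*$; a zero--number comparison of $\bar u(t,\cdot)$ with its own time translates then forces $\bar g\equiv H^*$, so the limiting profile solves \eqref{V-L_*} with $L_*$ replaced by $H^*$, and the uniqueness statements in Lemma \ref{lem3} and Remark \ref{M} give $H^*=L_*$, contradicting $H^*>L_*$.
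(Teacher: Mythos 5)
Your opening construction (the translated critical subsolution $w(t,x)=V_{l_\Lambda}(x-ct-\beta)$ with $\beta\ge 0$) is sound, and the zero-number bookkeeping on $J(t)=[z_1(t),h(t)]$ mirrors the paper's Lemma~\ref{prop-infty} correctly. But the argument as given has a genuine gap at the step you yourself flag. To run the comparison you must find a late time $\tau$ with $h(\tau)-c\tau<\beta-l_\Lambda$ for some $\beta\in[0,H^*-\Lambda)$, i.e.\ you need $\liminf_{t\to\infty}[h(t)-ct]<H^*-\Lambda-l_\Lambda$. There is no \emph{a priori} lower bound on $\liminf$, but more importantly there is no reason the inequality should hold: as $\Lambda\nearrow H^*$, the right side tends to $-l_{L^{-1}(H^*)}$, a fixed number, so the method can at best establish $\liminf\ge -l_{L^{-1}(H^*-\tau)}$ for $\tau>0$ --- which is exactly the content of Claim~1 in the paper's proof --- and cannot push the bound to $H^*$. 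The proposed bootstrap in fact works against you: raising the known lower bound on $\liminf$ makes it \emph{harder}, not easier, to realize $h(\tau)-c\tau<\beta-l_\Lambda$, since that inequality asks $\liminf$ to be small. You have therefore reproved the paper's Claim~1 (in slightly different form) but not the lemma.

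The ingredient you are missing is the paper's Claim~2: once $h(t)-ct>-l_*$ is known for all large $t$, one sets $\eta(t,x)=u(t,x+ct)-V_{l_*}(x)$ on $I(t)=[-l_*,\min\{h(t)-ct,H^*-\tau\}]$ and shows, via a careful analysis of the curves of zeros near each time $s_1$ at which $h(t)-ct$ touches or crosses the level $H^*-\tau$, that $\mathcal{Z}_{I(t)}(\eta(t,\cdot))$ drops by at least one across $s_1$. Since the count is finite and nonnegative, $[h(t)-ct]-(H^*-\tau)$ can change sign only finitely often, giving $h(t)-ct\ge H^*-\tau$ eventually. This \emph{counting} of sign changes, not a single subsolution comparison, is what drives the conclusion. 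Your fallback rigidity argument is also incomplete as stated: when you extract the entire limit $(\bar u,\bar h)$ from $\tau_n\to\infty$ with $g(\tau_n)\to H^*$, you obtain $\bar g(0)=H^*$ and $\bar g\le H^*$, but not $\bar g\equiv H^*$. In the paper's Lemma~\ref{H^*>L_c}, constancy of $\bar g$ is obtained precisely because $\lim_{t\to\infty}[h(t)-ct]=H^*$ (supplied by the present lemma) forces $g_n(t)\to H^*$ for \emph{every} $t$; with only $\limsup=H^*$ that step fails, and the asserted ``zero-number comparison with time translates'' that would restore constancy is not spelled out and is not obviously valid (the domains $\{x<\bar g(t)\}$ and $\{x<\bar g(t+s)\}$ differ, and the boundary behaviour of $\bar v(t,\cdot)-\bar v(t+s,\cdot)$ at the shorter of the two free boundaries needs control).
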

\begin{proof}\,
It suffices to show that for any $\tau\in(0,H^*-L_{*})$, $h(t)-ct\geq H^*-\tau$ for all large  $t$.
We now fix an arbitrary $\tau\in (0,H^*-L_{*})$.
Since $\limsup_{t\rightarrow\infty}[h(t)-ct]=H^*$, there exists $T_0>0$ such that $$h(T_0)-cT_0>H^*-\tau.$$
Due to $H^*-\tau>L_*$, by Remark \ref{l} and Lemma \ref{lem3}, there exists a unique pair $(l_*,V_{l_*})$ such that $L(l_*)=H^*-\tau$ and
\bess\left\{\begin{array}{ll}\medskip
\displaystyle dV_{l_*}''+cV_{l_*}'+A(x)V_{l_*}-bV_{l_*}^2=0,\ V_{l_*}>0 \mbox{ for } x\in(-l_*,H^*-\tau),\\
\medskip\displaystyle V_{l_*}(-l_*)=V_{l_*}(H^*-\tau)=0,\
\displaystyle -\mu V_{l_*}'(H^*-\tau)=c.
\end{array}\right.
\eess

\noindent
{\bf Claim 1:}\, There exists $T\geq T_0$ such that $h(t)-ct>-l_*$ when $t\geq T$. 

Otherwise, due to $\limsup_{t\rightarrow\infty}[h(t)-ct]=H^*$, there exist $t_2>t_1>T$ such that
$$ h(t_1)-ct_1=-l_*,\ h(t_2)-ct_2=H^*-\tau$$ and
$$-l_*<h(t)-ct<H^*-\tau\ \ \mbox{for}\ t\in(t_1,t_2).$$
Set
$v(t,x):=u(t,x+ct)$.
Then,
\[
v_t=dv_{xx}+cv_x+A(x)v-bv^2 \ \mbox{for}\ t>0, -ct<x<h(t)-ct.
\]
Similar to the proof of Lemma \ref{prop-infty}, by using the zero number argument and suitable comparison principles, we
can prove 
\[
v(t_2,x)>V_{l_*}(x)\ \ \mbox{for}\ -l_*\leq x<H^*-\tau,
\]
that is
\[
u(t_2,x+ct_2)>V_{l_*}(x) \ \ \mbox{for}\ -l_*\leq x<H^*-\tau.
\]
To stress the dependence of $l_*$ on $c$, we now write $l_*=l_{*,c}$,  with $L(l_{*,c})$, $V_{l_{*,c}}$  understood accordingly. 
By the Hopf lemma and continuity, there exist small $\delta>0$ such that
\[
u(t_2,x+ct_2)>V_{l_{*, c+\delta}}(x)\ \ \mbox{for} \ x\in[-l_{*, c+\delta},H^*-\tau).
\]
We now define
$$\underline u(t,x):=V_{l_{*, c+\delta}}(x-ct_2-(c+\delta)t),$$
\[
\xi_1(t)=(c+\delta)t+ct_2-l_{*, c+\delta},
\]
\[\xi_2(t)=(c+\delta)t+ct_2+H^*-\tau.
\]
Then for $t>0$ and $x\in(\xi_1(t),\xi_2(t))$,
\bess
\underline u_t&=&d\underline u_{xx}+A(x-ct_2-(c+\delta)t)\underline u-b\underline u^2\\
&\leq& d\underline u_{xx}+A(x-c(t+t_2))\underline u-b\underline u^2,
\eess
\[
\underline u(t,\xi_1(t))=\underline u(t,\xi_2(t))=0,
\]
\[
-\mu\underline u_x(t,\xi_2(t))=c+\delta=\xi_2'(t)
\]
and
\[
\underline u(0,x)\leq u(t_2,x)\ \mbox{for}\ x\in[\xi_1(0),\xi_2(0)].
\]
Applying the comparison principle to $(u(x,t_2+t), h(t_2+t))$ and $(\underline u(x,t), \xi_2(t))$ over $\{(x,t): x\in [\xi_1(t),\xi_2(t)], t>0\}$, we obtain, for $t>0$,
\[
u(t_2+t,x)\geq \underline u(t,x)\ \mbox{for}\  x\in [\xi_1(t),\xi_2(t)]
\]
and
\[
h(t+t_2)\geq\xi_2(t).
\]
But this last inequality is in contradiction with $\limsup_{t\rightarrow\infty}[h(t)-ct]=H^*<\infty$. We have thus proved Claim 1.

\noindent
{\bf Claim 2:}\, There exists $T_*>T$ such that
\[
h(t)-ct\geq H^*-\tau\ \mbox{for all }\ t>T_*.
\]
Since $\tau>0$ can be arbitrarily small, this claim clearly implies the validity of the lemma.

Suppose the claimed conclusion is not true. Then, in view of $\limsup_{t\rightarrow\infty}[h(t)-ct]=H^*$,
the function $[h(t)-ct]-(H^*-\tau)$ changes sign infinitely many times as $t$ increases to $\infty$.  We are going to use this fact to derive a contradiction.

Define
\[
\eta(t,x):=u(t, x+ct)-V_{l_*}(x),
\]
\[
l(t):=\min\{h(t)-ct,H^*-\tau\} \ \mbox{and} \  I(t):=[-l_*,l(t)],
\]
and let $\mathcal{Z}_{I(t)}(\eta(t,\cdot))$ denote the number of zeros of $\eta(t,\cdot)$ over $I(t)$.

Clearly
\[
\eta_t-d\eta_{xx}-c\eta_x=\Big(A(x)-b\big[u(t, x+ct)+V_{l_*}(x)\big]\Big)\eta,
\]
and for all large $t$ such that $-l_*+ct>0$, we have 
\[
\eta(t, -l_*)=u(t, -l_*+ct)>0.
\]
Moreover, $\eta(t, l(t))=0$ if and only if $[h(t)-ct]-(H^*-\tau)=0$.

Following an approach used in \cite{DLZ}, we now examine the value of $\mathcal{Z}_{I(t)}(\eta(t,\cdot))$ near any $t$ where $h(t)-ct$ crosses or touches  
the value $H^*-\tau$.
Choose $s_1>s_0\geq T$ such that $-l_*+cs_0>0$  and
\bes
\lbl{<}\;\;
-l_*<h(s)-cs<H^*-\tau\ \mbox{for} \ s\in[s_0,s_1),\; h(s_1)-cs_1=H^*-\tau.
\ees
Then 
\[
\eta(t, -l_*)=u(t, -l_*+ct)>0,\; \eta(t, l(t))=-V_{l_*}(h(t)-ct)<0 \mbox{ for } t\in [s_0, s_1).
\]
Hence we can use the zero number result (Lemma 2.2 in \cite{DLZ}) to conclude that $\mathcal{Z}_{I(s)}(\eta(s,\cdot))$ is finite and nonincreasing for $s\in(s_0,s_1)$, and
each time a degenerate zero appears in $I(s)$ for $\eta(s,\cdot)$, the value of $\mathcal{Z}_{I(s)}(\eta(s,\cdot))$ is decreased by at least $1$.
So, in the interval $(s_0,s_1)$, there can exist at most finitely many value of $s$ such that $\eta(s,\cdot)$ has a degenerate zero in $I(s)$. Thus, we can choose $\tilde s_1\in(s_0,s_1)$ such that for each $s\in[\tilde s_1,s_1)$, $\eta(s,\cdot)$ has only nondegenerate zeros in $I(s)$.
Due to
the nondegeneracy, the zeros of $\eta(s,\cdot)$, with $s\in[\tilde s_1,s_1)$, can be expressed as smooth curves:
\[
x=\gamma_1(s),\cdot\cdot\cdot,x=\gamma_m(s), \ \mbox{with} \ -l_*<\gamma_i(s)<\gamma_{i+1}(s)<l(s)\ \mbox{for}
\ i=1,2,\cdot\cdot\cdot,m-1.
\]
Similar to the proof of Lemma \ref{prop-infty}, we can show that the following limits exist:
\[
x_1=\lim_{s\rightarrow s_1^-}\gamma_1(s),\cdot\cdot\cdot,x_m=\lim_{s\rightarrow s_1^-}\gamma_m(s).
\]
If $x_i<x_{i+1}$, then $\eta(s_1,x)\neq 0$ for $x\in(x_i,x_{i+1})$, which
 follows from the strong maximum principle applied to the region
$D_i:=\{(t,x):\gamma_i(t)<x<\gamma_{i+1}(t), \tilde s_1\leq t\leq s_1\}$.
Moreover, as in the proof of Lemma \ref{prop-infty}, we can also prove
$$x_m=H^*-\tau=h(s_1)-cs_1.$$
Now, denote the zeroes of $\eta(s_1,\cdot)$ in $I(s_1)$ by
$y_1<y_2<\cdot\cdot\cdot<y_{m_1}$, where $m_1\leq m$, $y_1>-l_*$ and $y_{m_1}=H^*-\tau$.

{\it Case 1:}\, If $\eta(s_1,x)>0$ for $x\in(y_{m_1-1},y_{m_1})$, we show  that there exists $\tilde s_2>s_1$ such that $h(t)-ct>H^*-\tau$ for $t\in(s_1, \tilde s_2]$.

Indeed, in this case, for fixed $\hat x\in (y_{m_1-1}, y_{m_1})$, from $\eta(s_1, \hat x)>0$, by continuity, we can find $\tilde s_2>s_1$ close to $s_1$ such that
\[
\eta(s, \hat x)>0 \mbox{ for } s\in [s_1, \tilde s_2].
\]
We may then compare $(u(t,x), h(t))$ with $(V_{l_*}(x-ct), ct+H^*-\tau)$ by the comparison principle for free boundary problems (\cite{DL}) over the region 
\[
\Omega:=\{(x,t): ct+\hat x<x<ct+H^*-\tau,\; s_1< t\leq \tilde s_2\},
\]
 to conclude that
\[
h(t)>ct+H^*-\tau, \; u(t,x)>V_{l_*}(x-ct) \mbox{ in } \Omega.
\]
Hence  $h(t)-ct>H^*-\tau$ for $t\in(s_1,\tilde s_2]$.

{\it Case 2:}\, If $\eta(s_1,x)<0$ for $x\in(y_{m_1-1},y_{m_1})$, we can similarly show  that there exists $\tilde s_3>s_1$ such that $h(t)-ct<H^*-\tau$ for $t\in(s_1, \tilde s_3]$.

 Thus if we denote $\hat s_2=\tilde s_2$ when case 1 happens, and $\hat s_2=\tilde s_3$ when case 2 happens, we always have 
\[
\eta(t,x)\not=0 \mbox{ for } x\in\partial I(t)=\{-l_*, l(t)\},\; t\in (s_1, \hat s_2].
\]
Moreover, we can find $\epsilon_1>0$, $\epsilon_2>0$, both sufficiently small, such that
\[
\eta(t,l(t)-\epsilon_2)\not=0 \mbox{ for } t\in [s_1-\epsilon_1, \hat s_2],
\]
and
\bes\lbl{l(t)}
\eta(t,x)\not=0 \mbox{ for } t\in (s_1, \hat s_2],\; x\in [l(t)-\epsilon_2, l(t)].
\ees
Thus we can apply Lemma 2.2 of \cite{DLZ} to conclude that
\[
\mathcal{Z}_{[-l_*, l(t)-\epsilon_2]}(\eta(t,\cdot)) \mbox{ is finite and nondecreasing for } t\in (s_1-\epsilon_1, \hat s_2],
\]
and its value is decreased by at least 1 whenever $\eta(t, \cdot)$ has a degenerate zero in $[-l_*, l(t)-\epsilon_2]$.  This implies, in particular, that there can exist at most finitely many $t\in (s_1, \hat s_2]$ such that $\eta(t,\cdot)$ has a degenerate zero in $[-l_*, l(t)-\epsilon_2]$. Hence we can find $\bar s_2\in (s_1, \hat s_2]$ such that $\eta(t,\cdot)$ has only nondegenerate zeros in $[-l_*, l(t)-\epsilon_2]$ for $t\in (s_1, \bar s_2]$.
As before, for $t\in (s_1, \bar s_2]$, we can respresent the nondegenerate zeros of $\eta(t,\cdot)$ in $[-l_*, l(t)-\epsilon_2]$ by
\[
\tilde \gamma_1(t)<\tilde\gamma_2(t)<...<\tilde \gamma_p(t),
\]
and each $\tilde\gamma_i(t)$ is a smooth function for $t\in (s_1, \bar s_2]$. Moreover, $\tilde z_i:=\lim_{t\to s_1^+}\tilde \gamma_i(t)$ exists for each $i\in\{1,..., p\}$, for otherwise $w(s_1,\cdot)$ would be identically zero over some interval of  
$x$, contradicting to what we know about $w(s_1,\cdot)$. Furthermore, $\tilde z_i<\tilde z_{i+1}$ for $i\in\{1,..., p-1\}$, since otherwise we may apply the maximum principle over the region $\tilde A_i:=\{(x,t): \tilde \gamma_i(t)<x<\tilde\gamma_{i+1}(t), s_1\leq t\leq \bar s_2\}$ to deduce $w\equiv 0$ in $\tilde A_i$. Therefore $\tilde z_1<...<\tilde z_p$ are different zeros of $\eta(s_1,\cdot)$ in $[-l_*, l(s_1)-\epsilon_2]$. It follows immediately that $\{\tilde z_i:1\leq i\leq p\}\subset\{y_j: 1\leq j\leq m_1-1\}$ and hence $p\leq m_1-1$. In view of \eqref{l(t)}, we have 
\[
\mathcal{Z}_{I(t)}(\eta(t,\cdot))=\mathcal{Z}_{[-l_*, l(t)-\epsilon_2]}(\eta(t,\cdot))=p\leq m_1-1 \mbox{ for } t\in (s_1, \bar s_2].
\]
Recalling that 
\[
\mathcal{Z}_{I(t)}(\eta(t,\cdot))=m \mbox{ for } t\in (\tilde s_1, s_1),
\]
and 
\[\mbox{
$\mathcal{Z}_{I(t)}(\eta(t,\cdot))=m_1\leq m$ for $t=s_1$,}
\]
 we see that 
the value of $\mathcal{Z}_{I(t)}(\eta(t,\cdot))$ is decreased by at least 1 when $t$ increases across $s_1$, and $s_1$ is an isolated zero of the function $[h(t)-ct]-(H^*-\tau)$.

We now observe that if \eqref{<} is changed to
\[
h(s)-cs>H^*-\tau \mbox{ for } s\in [s_0, s_1),\; h(s_1)-cs_1=H^*-\tau,
\]
then the above arguments carry over and we also obtain the conclusion that
 $\mathcal{Z}_{I(t)}(\eta(t,\cdot))$ is decreased by at least 1 when $t$ increases across $s_1$, and $s_1$ is an isolated zero of the function $[h(t)-ct]-(H^*-\tau)$. The only point that requires extra attention is the following: We need to show
 $\mathcal{Z}_{I(t)}(\eta(t,\cdot))\geq 1$ for $t\in (s_0, s_1]$. Otherwise we can use the comparison principle for free boundary problems to show that $h(t)>ct+H^*-\tau$ for $t>s_1$ and $u(t,x)>V_{l_*}(x-ct)$ for $x\in [ct-l_*, ct+H^*-\tau]$ and $t>s_1$, which is in contradiction to the fact that the function $[h(t)-ct]-(H^*-\tau)$ changes sign infinitely many times as $t\to+\infty$.

By repeating the above process we can find a sequence $s_0<s_1<s_2<...<s_n<s_{n+1}<...$ such that 

(i) $
h(t)-ct=H^*-\tau \mbox{ for } t\in\{s_n: n\geq 1\},\; h(t)-ct\not=H^*-\tau \mbox{ for } t\in \cup_{n\geq 1}(s_n, s_{n+1}), 
$

 (ii) $\mathcal{Z}_{I(t)}(\eta(t,\cdot))$ is finite and nonincreasing for $t\in [s_0,\lim_{n\to\infty}s_n)$, and

(iii) $
0\leq \mathcal{Z}_{I(t)}(\eta(t,\cdot))\leq \mathcal{Z}_{I(s_1)}(\eta(s_1,\cdot))-n \mbox{ for } t\in (s_n, s_{n+1}).
$

\noindent
Here to guarantee that infinitely many such $s_n$ exist, we have used the fact that the function $[h(t)-ct]-(H^*-\tau)$
changes sign infinitely many times as $t\to\infty$,

Letting $n\to\infty$ in (iii), we deduce $\mathcal{Z}_{I(s_1)}(\eta(s_1,\cdot))=+\infty$, a contradiction. This completes the proof.
\end{proof}

The following result indicates that the situation described in Lamma \ref{prop3} actually can never happen.
\begin{lem}\lbl{H^*>L_c}\,
If $H^*:=\limsup_{t\rightarrow\infty}[h(t)-ct]<\infty$, then $H^*\leq L_{*}$.
\end{lem}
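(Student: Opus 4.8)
The plan is to argue by contradiction: assume $H^*:=\limsup_{t\to\infty}[h(t)-ct]$ satisfies $L_*<H^*<\infty$. By Lemma~\ref{prop3} this forces $\lim_{t\to\infty}[h(t)-ct]=H^*$. Writing $v(t,x):=u(t,x+ct)$ and $g(t):=h(t)-ct$, so that $v$ solves $v_t=dv_{xx}+cv_x+A(x)v-bv^2$ on $\{-ct<x<g(t)\}$ with $v_x(t,-ct)=0$, $v(t,g(t))=0$ and $g'(t)=-\mu v_x(t,g(t))-c$, the goal is to show that $v(t,\cdot)$ converges, locally uniformly on $(-\infty,H^*)$ and in $C^1$ up to the free boundary $x=g(t)$, to the unique positive solution $W_{H^*}$ of
\[
dW''+cW'+A(x)W-bW^2=0 \ \text{ in } (-\infty,H^*),\qquad W(-\infty)=W(H^*)=0,
\]
and then to reach a contradiction by proving $-\mu W_{H^*}'(H^*)>c$, i.e. that such a profile would push the front strictly faster than the environment. (Existence, uniqueness and the $(-\infty)$-decay of $W_R$ for any $R>L_*$ follow by repeating the arguments of Lemma~\ref{lem3} and Remark~\ref{M}; note that the existence of $V_*$ already gives $\lambda_1[(-\infty,L_*)]<0$, hence $\lambda_1[(-\infty,R)]<0$ and $\lambda_1[-l,R]<0$ for all large $l$.)

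For the upper bound, fix small $\epsilon>0$ and large $l$. Since $g(t)\to H^*<H^*+\epsilon$, for $t$ beyond some $T_\epsilon$ the region $\{t>T_\epsilon,\ -l<x<g(t)\}$ lies in $\{-l<x<H^*+\epsilon\}$ and on its parabolic boundary $v\le M:=\max\{\|u_0\|_\infty,a/b\}$ at $x=-l$ and at $t=T_\epsilon$, and $v=0$ at $x=g(t)$; comparing with the solution of the logistic equation on the fixed interval $(-l,H^*+\epsilon)$ with lateral data $M$ and $0$, and letting $t\to\infty$, then $l\to\infty$ (as in Remark~\ref{M}), then $\epsilon\to 0$, yields $\limsup_{t\to\infty}v(t,x)\le W_{H^*}(x)$. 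For the lower bound, fix small $\epsilon>0$ and large $l$; since $g(t)\to H^*>H^*-\epsilon$ and $-ct\to-\infty$, the interval $(-l,H^*-\epsilon)$ is eventually contained in the domain of $v(t,\cdot)$, so a small multiple $\epsilon_0 V_{l,H^*-\epsilon}$ of the positive steady state of \eqref{V-l-L} (a subsolution of the logistic equation when $\epsilon_0\le 1$) can be placed beneath $v$ at a large time and evolved; its long-time limit is $V_{l,H^*-\epsilon}$, so $\liminf_{t\to\infty}v(t,x)\ge V_{l,H^*-\epsilon}(x)$, and letting $l\to\infty$ then $\epsilon\to 0$ gives $\liminf_{t\to\infty}v(t,x)\ge W_{H^*}(x)$. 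Hence $v(t,\cdot)\to W_{H^*}$ locally uniformly.

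To finish, I would first show $-\mu W_{H^*}'(H^*)>c$. By Remark~\ref{l} and Lemma~\ref{lem3} there is a unique $l_*\in\mathbb{R}$ with $L(l_*)=H^*$, and $V_{l_*}:=V_{l_*,L(l_*)}$ solves the same ODE on $(-l_*,H^*)$ with $V_{l_*}(-l_*)=V_{l_*}(H^*)=0$ and $-\mu V_{l_*}'(H^*)=c$. On $(-l_*,H^*)$ both $W_{H^*}$ and $V_{l_*}$ solve the same equation and vanish at $H^*$, while $W_{H^*}(-l_*)>0=V_{l_*}(-l_*)$; by the comparison principle of \cite{DM} (applicable after writing the equation in divergence form, as noted before Lemma~\ref{lem3}) and the strong maximum principle, $W_{H^*}>V_{l_*}$ on $[-l_*,H^*)$ — they cannot coincide, for otherwise they would share the Cauchy data at $H^*$ and hence agree on all of $(-l_*,H^*]$ by ODE uniqueness, contradicting $W_{H^*}(-l_*)>0$. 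The Hopf lemma then gives $W_{H^*}'(H^*)<V_{l_*}'(H^*)=-c/\mu$. Finally, the uniform interior and boundary parabolic estimates for the free boundary problem, as in \cite{DL}, together with $g(t)\to H^*$ and the bound $0<h'(t)\le C_2$ of Lemma~\ref{estimate}, upgrade the locally uniform convergence $v(t,\cdot)\to W_{H^*}$ to $v_x(t,g(t))\to W_{H^*}'(H^*)$; hence $g'(t)=-\mu v_x(t,g(t))-c\to -\mu W_{H^*}'(H^*)-c>0$, so $h(t)-ct=g(t)\to+\infty$, contradicting $H^*<\infty$.

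The step I expect to be the main obstacle is the passage to the limit near the free boundary: the sub/super-solution squeeze only gives locally uniform convergence of $v(t,\cdot)$ on the open interval $(-\infty,H^*)$, and one must invoke the uniform parabolic estimates up to the moving boundary (with $g(t)\to H^*$ and $0<h'(t)\le C_2$) to control $v_x(t,g(t))$ in the limit. A clean alternative for this last part is to extract, along any sequence $t_n\to\infty$, a limiting entire solution $V(s,x)$ on $\mathbb{R}\times(-\infty,H^*)$ with $V(s,H^*)=0$; since $g(t)\to H^*$ (constant), one gets $\int_0^s[-\mu V_x(\sigma,H^*)-c]\,d\sigma=0$ for all $s$, whence $V_x(s,H^*)\equiv -c/\mu$, and then $V(s,\cdot)\to W_{H^*}$ as $s\to+\infty$ forces $W_{H^*}'(H^*)=-c/\mu$, again contradicting $W_{H^*}'(H^*)<-c/\mu$. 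A secondary point requiring care is the existence, uniqueness, $(-\infty)$-decay and continuous dependence on $R$ of the steady states $W_R$ for $R$ near $H^*$, all obtained by the methods of Lemma~\ref{lem3} and Remark~\ref{M}.
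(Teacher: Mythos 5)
Your proof is correct and arrives at the same contradiction (a Hopf--type comparison showing the boundary derivative of a limit profile is strictly steeper than $-c/\mu$, which is incompatible with the front speed forced by $g(t)\to H^*$), but takes a heavier route than the paper on the intermediate steps. The paper works in the frame centered at $h(t)$, where the free boundary becomes the fixed point $y=0$ and parabolic estimates up to the boundary are standard; it extracts a limiting entire solution $\hat w$ on $\mathbb{R}\times(-\infty,0]$ with $\hat w(t,0)=0$ and $\hat w_x(t,0)\equiv -c/\mu$ (the latter from $g_n'\to 0$), and then--crucially--never identifies $\hat w$ with a steady profile. Instead it compares a logistic subproblem $u_\phi$ on a \emph{bounded} interval $[-\tilde l-H^*,0]$ with $\phi\le \hat w(0,\cdot)$: the steady state $u_*$ satisfies $u_*'(0)\ge -c/\mu$ by comparison with $\hat w$ and $u_*'(0)<V_{l_*}'(H^*)=-c/\mu$ by comparison with the shifted $V_{l_*}$ (using $\tilde l>l_*$). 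This one-sided lower bound makes your entire step of proving $v(t,\cdot)\to W_{H^*}$ unnecessary. Your squeeze argument is correct (the sub/supersolution bounds, the scaling $\epsilon_0 V$, and the comparison with $V_{l_*}$ via ODE uniqueness and Hopf are all fine, and the existence/uniqueness/decay of $W_R$ for $R>L_*$ do follow by rerunning Lemma~\ref{lem3} and Remark~\ref{M}), but it requires extra machinery (continuous dependence of $W_R$ on $R$) and it leaves you with exactly the boundary-derivative upgrade you flag as the main obstacle. Your ``clean alternative'' at the end is essentially the paper's integral argument, but you still need the squeeze to identify the limit as $W_{H^*}$, whereas the paper's $u_\phi$ trick bypasses that identification entirely. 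In short: valid, genuinely different in the middle, and longer; the paper's version is the more economical because it only needs a lower bound for $\hat w$, not the full convergence to a characterized profile.
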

\begin{proof}\,
Suppose that the conclusion is not true, i.e., $H^*\in (L_{*}, \infty)$. Then we can apply Lemma \ref{prop3} to conclude that $\lim_{t\rightarrow\infty}[h(t)-ct]=H^*$.
Choose a sequence $\{t_n\}$ satisfying $t_n\rightarrow\infty$.
We define
\[
g(t):=h(t)-ct,\; w(t,x):=u(t,x+h(t)),\ \ t>0,\; x<0,
\]
\[
w_n(t,x):=w(t+t_n,x),\; 
g_n(t):=g(t+t_n), \ \ h_n(t):=h(t+t_n).
\]
Then 
\bes\lbl{hat-w_n}\left\{\begin{array}{lll}\medskip
\displaystyle \frac{\partial w_n}{\partial t}=d\frac{\partial^2 w_n}{\partial x^2}&\hspace{-0.3cm}+\; [c+g_n'(t)]\frac{\partial w_n}{\partial x}&\\
&+A(x+g_n(t)) w_n-b w_n^2,\ \ & t>-t_n, -h_n(t)<x<0,\\
\medskip\displaystyle  w_n(t,0)=0, \ \ & & t>-t_n,\\
\displaystyle -\mu\frac{\partial w_n}{\partial x}(t,0)&\hspace{-0.5cm}=c+g_n'(t),\ \ & t>-t_n.
\end{array}\right.
\ees
By Lemma \ref{estimate}, $\{ w_n\}$   and $\{g_n'\}$ are bounded in the $L^\infty$ norm.
By the $L^p$ estimates and Sobolev embeddings, and a standard diagonal process, there exists a subsequence of $\{w_n\}$, denoted still by $\{w_n\}$ for convenience, such that
$$ w_n\rightarrow \hat{w}\ \mbox{in}\ C_{loc}^{\frac{1+\alpha}{2},1+\alpha}(\bR^1\times(-\infty,0]),$$
where $\alpha\in(0,1)$.
By virtue of the third identity in (\ref{hat-w_n}), we have
$$g_n'(t)\rightarrow\xi(t):=-\mu\frac{\partial\hat w}{\partial x}(t,0)-c\ \mbox{in} \ C_{loc}^{\alpha/2}(\bR^1)\ \mbox{as}\ n\rightarrow\infty.$$
Since
$$g_n(t)=g_n(0)-\int_0^tg_n'(s)ds,$$
 letting $n\rightarrow\infty$, we have, in view of $\lim_{n\to\infty}g_n(t)=\lim_{s\to\infty}[h(s)-cs]=H^*$,
$$H^*=H^*-\int_0^t\xi(s)ds.$$
So for any $t\in\bR^1$, $\int_0^t\xi(s)ds=0$, which implies $\xi\equiv 0$.
Now we see that $\hat w$ satisfies
\bes\lbl{hat-w}\left\{\begin{array}{ll}\medskip
\displaystyle \frac{\partial\hat w}{\partial t}=d\frac{\partial^2\hat w}{\partial x^2}+c\frac{\partial\hat w}{\partial x}+A(x+H^*)\hat w-b\hat w^2,\ \ & t\in\bR^1, -\infty<x<0,\\
\medskip\displaystyle \hat w(t,0)=0,\ \ & t\in\bR^1,\\
\displaystyle -\mu\frac{\partial \hat w}{\partial x}(t,0)=c,\ \ & t\in\bR^1.
\end{array}\right.
\ees
Since $\hat w\geq 0$ and $\hat w_x(t,0)=-c/\mu<0$, by the strong maximum principle we must have $\hat w(t,x)>0$ for $t\in \bR^1$ and $x<0$.

Since $H^*>L_{*}$, by Lemma \ref{lem3} and Remark \ref{l}, there exists  $l_*$ such that $L(l_*)=H^*$. Fix $\tilde l>l_*$ and let $\phi\in C_0([-\tilde l-H^*,0])$ be a nonnegative function satisfying $\phi\not\equiv 0$ and $\phi(x)\leq \hat w(0,x)$ on $[-\tilde l-H^*,0]$.
Let $u_{\phi}(t,x)$ denote the unique positive solution of
\bes\lbl{u-phi}\left\{\begin{array}{ll}
\medskip\displaystyle u_t=du_{xx}+cu_x+A(x+H^*)u-bu^2,\ \ & t>0, -\tilde l-H^*<x<0,\\
\medskip\displaystyle u(t,-\tilde l-H^*)=u(t,0)=0,\ \ & t>0,\\
\displaystyle u(0,x)=\phi(x),\ \ & x\in[-\tilde l-H^*,0].
\end{array}\right.
\ees
Since $\lambda_1[-\tilde l,H^*]<\lambda_1[-l_*, H^*]=\lambda_1[-l_*, L(l_*)]<0$, by standard result for logistic equations we have
$$u_\phi\rightarrow u_*\ \mbox{in}\ C^{1+\alpha}[-\tilde l-H^*,0]\ \mbox{as}\ t\rightarrow\infty,$$
where $u_*$ is the unique positive solution of
\bess\left\{\begin{array}{ll}\medskip
\medskip\displaystyle u_*''+cu_*'+A(x+H^*)u_*-bu_*^2=0,\ \ & -\tilde l-H^*<x<0,\\
\displaystyle u_*(-\tilde l-H^*)=u_*(0)=0.
\end{array}\right.
\eess
Moreover, since $\tilde l>l_*$, by the comparison principle we find that $u_*(x)>V_{l_*}(x+H^*)$ for $x\in [-l_*-H^*, 0)$.
We may then apply the Hopf boundary lemma to deduce 
\[
u'_*(0)<V_{l_*}'(H^*)=V_{l_*}'(L(l_*))=-c/\mu.
\]
On the other hand, we may use the comparison principle  to (\ref{u-phi}) and (\ref{hat-w}) to obtain $u_\phi\leq \hat w$ for $t>0$ and $x\in [-\tilde l-H^*, 0]$, and it follows that
$$-\frac{c}{\mu}=\frac{\partial \hat w}{\partial x}(t,0)\leq \frac{\partial u_\phi}{\partial x}(t,0) \ \mbox{for} \ t>0.$$
Letting $t\rightarrow\infty$, we obtain
\[
u_*'(0)\geq -c/\mu,
\]
which contradicts
$u_*'(0)<-c/\mu$. The proof is complete.
\end{proof}

\begin{lem}\lbl{L_{*}}\,
If $\limsup_{t\rightarrow\infty}[h(t)-ct]=L_{*}$, then $\lim_{t\rightarrow\infty}[h(t)-ct]=L_{*}$.
\end{lem}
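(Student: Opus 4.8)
The plan is to argue by contradiction through a Lyapunov functional for a limiting entire‑in‑time solution. Write $g(t):=h(t)-ct$ and suppose that $\limsup_{t\to\infty}g(t)=L_*$ while $\lim_{t\to\infty}g(t)$ does not exist, so that $\ell:=\liminf_{t\to\infty}g(t)<L_*$. I would first record that $g$ is bounded: if $g(t_0)<-l_0$ for some large $t_0$, then on any subsequent interval where $g(t)<-l_0$ the equation for $u$ on $[0,h(t)]$ is the diffusive logistic equation in the constant, unfavourable environment $a_0\le 0$, and the standard argument that $\int_0^{h(t)}u\,dx+\tfrac d\mu h(t)$ is non‑increasing forces $h$ to remain bounded, whence $g(t)\to-\infty$, contradicting $\limsup g=L_*$. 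Hence $g(t)\ge-l_0$ for all large $t$, and together with $\limsup g=L_*$ this makes $g$ bounded.

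Next I would run the compactness argument from the proof of Lemma \ref{H^*>L_c}: choose $r_n\to\infty$ with $g(r_n)\to\ell$, set $w_n(t,x):=u(t+r_n,x+h(t+r_n))$, and pass to subsequential limits $w_n\to\hat w$ in $C^{(1+\alpha)/2,1+\alpha}_{loc}(\bR\times(-\infty,0])$ and $g(\cdot+r_n)\to G$ in $C^1_{loc}(\bR)$. Then $\hat v(t,x):=\hat w(t,x-G(t))$ (equivalently $\hat v(t,\cdot)=\lim_n u(t+r_n,\cdot+c(t+r_n))$) is an entire‑in‑time solution of
\[
\hat v_t=d\hat v_{xx}+c\hat v_x+A(x)\hat v-b\hat v^2 \quad(x<G(t)),\qquad \hat v(t,G(t))=0,\qquad G'(t)=-\mu\hat v_x(t,G(t))-c,
\]
with $\hat v\ge 0$, with $\hat v$ and $\hat v_x$ uniformly bounded (by Lemma \ref{estimate} and parabolic estimates), $G(0)=\ell$, and $G$ bounded with $G(t)\le L_*$ for all $t$ (from $g\le\limsup g+o(1)$). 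As in Lemma \ref{prop-infty}, $\hat v>0$ for $x<G(t)$ and $\hat v_x(t,G(t))<0$, so $G'(t)+c=-\mu\hat v_x(t,G(t))>0$, i.e.\ $G'(t)>-c$.

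The core of the proof is the functional
\[
E(t):=\int_{-\infty}^{G(t)}e^{\frac cd x}\Big[\tfrac d2\hat v_x^2-\tfrac{A(x)}2\hat v^2+\tfrac b3\hat v^3\Big]dx,\qquad
\mathcal E(t):=E(t)+\frac{cd^2}{2\mu^2}\,e^{\frac cd G(t)}.
\]
Differentiating, integrating the $\partial_t$‑term by parts (the boundary contribution at $-\infty$ vanishes thanks to the weight $e^{\frac cd x}$), and using $\hat v(t,G(t))\equiv 0$ — hence $\hat v_t(t,G(t))=-G'(t)\hat v_x(t,G(t))$ — together with $-\mu\hat v_x(t,G(t))=G'(t)+c$, one finds
\[
\mathcal E'(t)=-\frac d{2\mu^2}\,G'(t)^2\big(G'(t)+2c\big)e^{\frac cd G(t)}-\int_{-\infty}^{G(t)}e^{\frac cd x}\,\hat v_t^2\,dx\ \le\ 0,
\]
since $G'(t)+2c>c>0$. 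Because $\hat v,\hat v_x$ are bounded and $G\le L_*$, $\mathcal E$ is bounded, so $\mathcal E(\pm\infty)$ exist and are finite and $\int_{\bR}\big[G'(t)^2+\|\hat v_t(t,\cdot)\|^2_{L^2_{loc}}\big]dt<\infty$; consequently $G'(t)\to 0$ and $\hat v_t(t,\cdot)\to 0$ locally as $|t|\to\infty$. Hence along any $t_k\to\pm\infty$ a subsequence of $(\hat v(t_k,\cdot),G(t_k))$ converges to a nonnegative steady state $(V,L)$ solving $dV''+cV'+A(x)V-bV^2=0$ on $(-\infty,L)$, $V(L)=0$, $-\mu V'(L)=c$; arguing as in the proof of Lemma \ref{lem3} gives $V(-\infty)=0$ and $V>0$ on $(-\infty,L)$, and then the uniqueness in Lemma \ref{lem3}(iii) (cf.\ also Remark \ref{M}) forces $(V,L)=(V_*,L_*)$. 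Therefore $G(t)\to L_*$ as $t\to\pm\infty$, so $\mathcal E(-\infty)=\mathcal E(+\infty)$ (both equal the value of $\mathcal E$ at $(V_*,L_*)$); a non‑increasing function with equal limits at $\pm\infty$ is constant, so $\mathcal E'\equiv 0$, which forces $G'\equiv 0$ and thus $G\equiv G(0)=\ell$. Since $\ell<L_*$, this contradicts $G(t)\to L_*$, and the lemma follows.

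The step I expect to be the main obstacle is producing the correct Lyapunov functional: the raw energy $E$ is not monotone — its derivative contains the indefinite term $-\tfrac d{2\mu^2}G'(G'+c)^2e^{\frac cd G}$ — and one must spot that the correcting term $\tfrac{cd^2}{2\mu^2}e^{\frac cd G(t)}$ is exactly what makes the cross terms recombine into $G'^2(G'+2c)\ge 0$ (this is where the constraint $G'>-c$, i.e.\ that the free boundary cannot recede faster than the environmental speed, is used). Secondary care is needed at the $-\infty$ end of the limiting domain, both for the integration by parts and for identifying the $t\to\pm\infty$ limits as $(V_*,L_*)$, and for the preliminary claim that $h(t)-ct$ stays bounded below.
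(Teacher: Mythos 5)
Your approach is genuinely different from the paper's: you pass to an entire-in-time limit of the translated problem and run a Lyapunov-functional argument, whereas the paper argues directly on the finite-time solution by an intersection-number comparison with the stationary profile $W_{l,L}$ supplied by Lemma~\ref{lem4} (exactly the mechanism of Claim~2 in Lemma~\ref{prop3}). I checked your energy computation in detail and it is correct: the choice of the correction term $\frac{cd^2}{2\mu^2}e^{\frac cd G(t)}$ does exactly recombine the boundary contributions into $-\frac{d}{2\mu^2}e^{\frac cd G}G'^2(G'+2c)$, and the constraint $G'>-c$ (which you obtain from the Hopf lemma applied to the limiting profile) makes this sign-definite. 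Modulo the issue below, the rest of the skeleton (boundedness of $\mathcal E$, $\int_{\mathbb R}G'^2\,dt<\infty$, relative compactness, identification of the $t\to\pm\infty$ $\omega$-limits with $(V_*,L_*)$ via Remark~\ref{M}/Lemma~\ref{lem4}, and the conclusion $\mathcal E\equiv{\rm const}\Rightarrow G'\equiv 0$) is sound.

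However, there is a genuine gap at the very start. Your whole construction needs $G$ to be bounded on all of $\mathbb R$, hence needs $g(t)=h(t)-ct$ to be bounded below for large $t$; you assert this via the claim that if $g(t_0)<-l_0$ then $g$ remains below $-l_0$ forever and so $h$ stays bounded. But the monotonicity of $\int_0^{h(t)}u\,dx+\frac{d}{\mu}h(t)$ holds only while $g(t)<-l_0$; it does not by itself rule out $g$ dipping below $-l_0$ and re-entering the favourable zone, after which $\int u$ and $h$ may grow again. Quantitatively, the bound you get on the maximal dip interval $[t_0,t_1]$ is $h(t_1)\le h(t_0)(1+\mu C_1/d)$, which is compatible with $t_1>t_0$ for large $t_0$, so no contradiction follows. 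In fact, $\liminf_{t\to\infty}g(t)>-\infty$ is essentially part of the conclusion of the lemma and is not trivially separable from it; the paper's zero-number argument is precisely designed to avoid needing any a priori lower bound on $H_*:=\liminf[h(t)-ct]$ (it works verbatim when $H_*=-\infty$, since $W_{l,L}$ lives on a fixed finite interval $[-l,L]$). To rescue your route you would need an independent argument that $g$ stays $\ge -l_0$ for large $t$ under the sole hypothesis $\limsup g=L_*$. A secondary point: the strict positivity of $\hat w$ on $\{x<G(t)\}$ (needed for the Hopf step giving $G'>-c$) should be justified; this does work — if $\hat v$ vanished somewhere it would vanish for all earlier times, forcing $G'\equiv -c$ there and hence $G(t)\to+\infty$ as $t\to-\infty$, contradicting $G\le L_*$ — but it ought to be spelled out.
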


\begin{proof}\, It suffices to show that
\[
H_*:=\liminf_{t\rightarrow\infty}[h(t)-ct]=L_*.
\]
Otherwise, $H_*<L_*$.  It follows that, for any $L\in (H_*, L_*)$, the function $h(t)-ct-L$ changes sign infinitely many times as $t\to\infty$. 
Similar to the proof of Lemma \ref{prop3}, we are going to derive a contradiction from this fact.

Fix $L_0\in (H_*, L_*)$. By Lemma \ref{lem4}, there exists large $l>0$ such that  
\[
-\mu W_{l,L_0}'(L_0)<c,
\]
 where $W_{l,L}(x)$ denotes the unique positive solution of \eqref{M-L-*} with $L_*$ replaced by $L$. Clearly $W_{l,L_*}=W_l$, the unique positive solution of \eqref{M-L-*}.
By the choice of $M$ in \eqref{M-L-*} and a simple comparison argument, we easily see that $W_l(x)>V_*(x)$ for $x\in [-l, L_*)$. By the Hopf boundary lemma, we obtain $W_l'(L_*)<V_*'(L_*)=-c/\mu$. Therefore
\[
-\mu W'_{l,L_*}(L_*)=-\mu W'_l(L_*)>c.
\]
By the continuous dependence of $W_{l,L}'(L)$ on $L$, we see that there exists $L\in (L_0, L_*)$ such that 
\[
-\mu W'_{l,L}(L)=c.
\]

Let us  observe that due to the choice of $M$, we have $u(t,x)<M$ for $t>0$ and $x\in [0, h(t)]$. 
We now define
\[
\eta(t,x):=u(t, x+ct)-W_{l,L}(x),
\]
\[
l(t):=\min\{h(t)-ct,L\}\ \mbox{and}\ I(t)=[-l,l(t)].
\]
Then for large $t$, say $t\geq T_0$, we have $x+ct>0$ for $x\in I(t)$ and hence
\[
\eta(t, -l)=u(t, -l+ct)-M<0.
\]
Moreover, $\eta(t, l(t))=0$ if and only if $h(t)-ct-L=0$. Since $h(t)-ct-L$ changes sign infinitely many times as $t\to+\infty$, we 
may repeat the arguement in the proof of Claim 2 of Lemma
 \ref{prop3} to derive a contradiction. The details are omitted.
\end{proof}

\subsection{The case of spreading}

We start by giving some sufficient conditions for 
\[
\lim_{t\to\infty}[h(t)-ct]=+\infty.
\]
  Let $V_0$ be the positive solution of
\eqref{V-l-L} with $l=0$ and $L=L(0)$, so that we have $-\mu V_0'(L(0))=c$.

\begin{lem}\lbl{spreading-2}
Suppose there exists $t_0\geq 0$ such that
\bes\lbl{2-condition}h(t_0)-ct_0\geq L(0)\ \ \mbox{and}\ \ u(t_0,x)\geq V_0(x-ct_0)\ \ \mbox{for}\ x\in[ct_0,ct_0+L(0)].\ees
Then $\lim_{t\to\infty}[h(t)-ct]=+\infty$.
\end{lem}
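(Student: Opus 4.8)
The plan is to use the condition \eqref{2-condition} at time $t_0$ as a launching pad for a moving subsolution that pushes the free boundary ahead of $ct + L(0)$, and then bootstrap: once we know $h(t) - ct \geq L(0) - \epsilon$ (say) for all large $t$ together with a matching lower bound on $u$, we can repeat the construction with a slightly faster frame to gain an extra fixed increment $\delta_0 > 0$ in $h(t) - ct$, and iterate indefinitely. The key point making the iteration possible is Lemma \ref{l-l} (and the phase-plane facts recalled in Section 2): for $\tilde c$ slightly above $c$ but still in $(c, c_0)$, there is a semi-wave-type profile $\tilde V$ on an interval of length $\tilde L$ with $-\mu \tilde V'(\tilde L) = \tilde c$, and since $\tilde c > c$, a subsolution traveling in the frame $x = \tilde c t + \text{const}$ does not merely keep pace with the unfavorable zone moving at speed $c$ — it outruns it, so that the spatial region occupied by the subsolution eventually lies entirely in $\{x - ct \geq 0\}$ where $A \equiv a$, and $\tilde V$ is then a genuine subsolution of the equation there.

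First I would set up the basic comparison at $t_0$. Define $\underline u(t,x) := V_0(x - ct - (t-t_0)\cdot 0)$ — i.e. keep the $V_0$ profile in the frame $x = ct + \text{const}$ first, just to propagate \eqref{2-condition} forward; the triple $(\underline u, \xi_1(t), \xi_2(t))$ with $\xi_i$ moving at speed $c$ is a subsolution of the free boundary problem on $[ct_0, ct_0 + L(0)]$ (here $A(x - ct) = a$ on this range since $x - ct \geq 0$), and the comparison principle for free boundary problems from \cite{DL} gives $h(t) - ct \geq L(0)$ and $u(t,x) \geq V_0(x-ct)$ for $x \in [ct, ct + L(0)]$, for all $t \geq t_0$. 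In particular $\limsup_{t\to\infty}[h(t) - ct] \geq L(0)$. Now, since $c < c_0$ and $L(0)$ corresponds to $\tilde c = c$, and $L(\tilde c)$ (the analogue of $L(0)$ for speed $\tilde c$, from the phase-plane analysis of \cite{GLZ}) varies continuously, we can choose $\tilde c \in (c, c_0)$ with $\tilde L := L(\tilde c)$ as close to $L(0)$ as we like; then, using the lower bound $u(t, \cdot) \geq V_0(\cdot - ct)$ just obtained together with the convergence result for the logistic problem on a large fixed interval (as in Lemma \ref{speed}, equations \eqref{psi-equation}--\eqref{psi-epsilon}), we get $u(T_0, x) \geq a/b - \epsilon > \tilde V(x - ct)$ on an interval around $x = cT_0 + \text{something}$ of length $\tilde L$, for some large $T_0$ and small $\epsilon$.

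Then I would run the moving-subsolution argument exactly as in the proof of Lemma \ref{speed}: set $\underline u(t,x) := \tilde V(x - \tilde c t - \text{const})$ with endpoints $\xi_1(t), \xi_2(t)$ moving at speed $\tilde c$, chosen so that $\xi_1(0) > cT_0$, $\xi_2(0) < h(T_0)$, and $\underline u(0,\cdot) \leq u(T_0, \cdot)$ on $[\xi_1(0), \xi_2(0)]$. Because $\tilde c > c$, for $t$ large the interval $[\xi_1(t), \xi_2(t)]$ lies in $\{x \geq ct\}$ where $A(x-ct) = a$, so $\underline u_t \leq d\underline u_{xx} + A(x-ct)\underline u - b\underline u^2$ there; at the left-hand endpoint during the initial transient one checks $A(\xi_1(t) - ct) \leq a$ still makes $\underline u$ a subsolution since $\underline u \geq 0$; and $-\mu \underline u_x(t, \xi_2(t)) = \tilde c = \xi_2'(t)$. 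The comparison principle for free boundary problems then yields $h(t + T_0) \geq \xi_2(t) = \tilde c t + \text{const}$, hence $h(t) - ct \geq (\tilde c - c) t + O(1) \to +\infty$. This already gives $\limsup_{t\to\infty}[h(t)-ct] = +\infty$, and then Lemma \ref{prop-infty} upgrades the $\limsup$ to a genuine limit, proving $\lim_{t\to\infty}[h(t)-ct] = +\infty$.

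The main obstacle — though it is more bookkeeping than conceptual — is verifying that the moving subsolution with $\tilde V$ really is a subsolution \emph{throughout} the time interval, including the initial phase when the left endpoint $\xi_1(t)$ may still lie in the region $x - ct < 0$ where $A(x-ct) < a$. Here one uses monotonicity of $A$ together with $\underline u \geq 0$: replacing $a$ by the smaller value $A(\xi_1(t) - ct) \leq a$ only decreases the reaction term $A \underline u - b\underline u^2$ when $\underline u \geq 0$... wait, that is the wrong direction. In fact one should instead note that $\tilde V$ was built as a solution of the $\tilde c$-problem with coefficient $a$, and $\underline u_t - d\underline u_{xx} - A(x-ct)\underline u + b\underline u^2 = (a - A(x-ct))\underline u \cdot(\text{sign check})$; since $A(x-ct) \leq a$ this quantity is $\geq 0$ only where... — so the honest fix is to choose $T_0$ large enough that $\xi_1(0) = cT_0 + \text{const}$ already satisfies $\xi_1(0) - cT_0 \geq 0$, i.e. the \emph{whole} profile starts (and therefore stays, since $\tilde c > c$) in $\{x - ct \geq 0\}$; this is possible precisely because we have the freedom to slide the profile's initial position, and the lower bound $u(T_0,\cdot) \geq a/b - \epsilon$ holds on a window that we may take centered far to the right. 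With that choice $A(x-ct) \equiv a$ on $[\xi_1(t), \xi_2(t)]$ for all $t \geq 0$ and $\tilde V$ is a bona fide stationary subsolution, so the comparison goes through cleanly. The remaining steps are routine applications of results already established.
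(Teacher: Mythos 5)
Your overall strategy matches the paper's: first propagate the $V_0$ profile at speed $c$ to get $u(t,\cdot)\geq V_0(\cdot-ct)$ and $h(t)-ct\geq L(0)$ for $t\geq t_0$, and then bump the frame speed up to some $\tilde c\in(c,c_0)$ so that the free boundary in the moving frame $x-ct$ runs off to $+\infty$. The difficulty is entirely in the mechanism you use to launch the $\tilde c$-subsolution, and there is a genuine gap there.

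You propose to obtain the initial data for the $\tilde V$-subsolution by running the logistic-convergence bootstrap of Lemma \ref{speed}'s proof (the auxiliary problem \eqref{psi-equation} on a fixed window $[0,2\tilde l]$ in the moving frame, converging to the stationary profile $\psi_*$ which exceeds $a/b-\epsilon/2$ on a long interval), so as to conclude $u(T_0,x)\geq a/b-\epsilon$ on a window of length $\tilde L$. But that comparison requires $h(t)-ct>2\tilde l$ for all $t\geq T_1$ — in Lemma \ref{speed} this is supplied by the standing assumption $\limsup_{t\to\infty}[h(t)-ct]=\infty$ together with Lemma \ref{prop-infty}, and it is precisely what is \emph{not} available to you here. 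After the first step you only know $h(t)-ct\geq L(0)$, and $\tilde l$ is necessarily chosen much larger than $\tilde L$ (hence larger than $L(0)$), so the fixed window $[0,2\tilde l]$ is not contained in the support $[0,h(t)-ct]$ of $v(t,\cdot):=u(t,\cdot+ct)$. You cannot then compare $\psi$ with $v$ on $[0,2\tilde l]$: the extension of $u$ by zero across the free boundary $x=h(t)$ is only a weak \emph{sub}solution on the fixed domain (the jump in $u_x$ at $x=h(t)$ produces a negative Dirac contribution in $-d\,u_{xx}$), so $\psi\leq v$ does not follow and the bootstrap collapses. Your suggested fix — sliding the profile far to the right so $A\equiv a$ — addresses a different and in fact benign issue; it does not repair this one.

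The paper's route avoids the bootstrap entirely. After the first comparison one has, by the strong maximum principle applied to $u-\underline u$ (using $u(t,\xi_1(t))>0=\underline u(t,\xi_1(t))$), the \emph{strict} inequalities $u(\tilde t_0,x)>V_0(x-c\tilde t_0)$ on $[c\tilde t_0,\,c\tilde t_0+L(0))$ and $h(\tilde t_0)>c\tilde t_0+L(0)$ at any fixed $\tilde t_0>t_0$. Writing $L(0)=L_c(0)$ and $V_0=V_{0,c}$ to display the dependence on the frame speed, one then invokes the Hopf lemma together with the continuous dependence of $(L_c(0),V_{0,c})$ on $c$ to find $\tilde c>c$ close to $c$ with $h(\tilde t_0)\geq\tilde c\,\tilde t_0+L_{\tilde c}(0)$ and $u(\tilde t_0,x)\geq V_{0,\tilde c}(x-\tilde c\,\tilde t_0)$ on $[\tilde c\,\tilde t_0,\,\tilde c\,\tilde t_0+L_{\tilde c}(0)]$. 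Running the same comparison at speed $\tilde c$ (which keeps $A\equiv a$ on the subsolution's support, since $(\tilde c-c)(t+\tilde t_0)>0$) gives $h(t+\tilde t_0)\geq\tilde c(t+\tilde t_0)+L_{\tilde c}(0)$, hence $h(t)-ct\to+\infty$ directly, with no appeal to Lemma \ref{prop-infty}. You should replace your logistic-convergence step by this strict-inequality / continuous-dependence-in-$c$ argument; everything else in your outline then goes through.
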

\begin{proof}
Define
$$
\xi_1(t):=c(t+t_0),\; \ \xi_2(t):=c(t+t_0)+L(0)\ \mbox{for}\ t>0,
$$
and
$$
\underline u(t,x):=V_0(x-c(t_0+t))\ \mbox{for}\ t>0\ \mbox{and}\ x\in[\xi_1(t), \xi_2(t)].
$$
Clearly $\underline u$ satisfies 
\[ 
u_t-du_{xx}=au-bu^2 \mbox{ for } t>0,\; x\in [\xi_1(t), \xi_2(t)],
\]
and
\[
\underline u(t, \xi_1(t))=\underline u(t, \xi_2(t))=0,
 \]
\[ 
\xi_2'(t)=c=-\mu V_0'(L(0))=-\mu \underline u_x(t, \xi_2(t)).
\]
Hence in view of (\ref{2-condition}), and $u(t, \xi_1(t))>0$,  and $A(x-c(t_0+t))=a$ for $x\in [\xi_1(t),\xi_2(t)]$ and $t>0$, we can use the comparison  principle for free boundary problems to obtain
$$h(t+t_0)\geq \xi_2(t)\ \mbox{for}\ t>0$$
and
$$u(t+t_0,x)\geq \underline u(t,x)\ \mbox{for}\ t>0\ \mbox{and}\ x\in(\xi_1(t), \xi_2(t)).$$
Since $u(t, \xi_1(t))>0=\underline u(t, \xi_1(t))$, by the strong maximum principle we further obtain
\[
u(t+t_0,x)> \underline u(t,x)=V_0(x-c(t_0+t))\ \mbox{for}\ t>0,\;  x\in(\xi_1(t), \xi_2(t)),
\]
and 
\[
h(t+t_0)>\xi_2(t)=c(t+t_0)+L(0) \mbox{ for $t>0$.}
\]

To stress the dependence of $L(0)$ and $V_0$ on $c$, we now rewrite them as $L(0)=L_c(0)$ and $V_0=V_{0,c}$.
Then
by the  Hopf lemma and continuity of $(L_c(0), V_{0,c})$ on $c$, for fixed $\tilde t_0>t_0$, we can find 
 $\tilde c>c$ but very close to $c$,
such that
\[
h(\tilde t_0)\geq \tilde c\, \tilde t_0+L_{\tilde c}(0)
\]
and
\[
u(\tilde t_0,x)\geq V_{0,\tilde c}(x-\tilde c\, \tilde t_0)\ \ \mbox{ for }\ \tilde c\,\tilde t_0<x<\tilde c\, \tilde t_0+L_{\tilde c}(0).
\]

We may now repeat the above comparison argument, but with $(t_0, c)$ replaced by $(\tilde t_0, \tilde c)$, to deduce that
$$h(t+\tilde t_0)\geq \tilde c\,(t+\tilde t_0)+L_{\tilde c}(0)\ \ \mbox{for}\ t>0,$$
and 
$$u(t+\tilde t_0,x)\geq V_{0, \tilde c}(x-\tilde c(t+\tilde t_0))$$
for $t>0$, $\tilde c (t+\tilde t_0)<x<\tilde c (t+\tilde t_0)+L_{\tilde c}(0)$.
We thus obtain
$\lim_{t\rightarrow\infty}[h(t)-ct]=\infty$. 
\end{proof}

\begin{remark}\lbl{suff-cond}{\rm
Let us note that if we take $t_0=0$ in \eqref{2-condition}, then this sufficient condition is reduced to a condition on the initial values of \eqref{MP}:
\bes\lbl{0-condition}
h(0)\geq L(0) \mbox{ and } u(0,x)\geq V_0(x) \mbox{ in } [0, L(0)].
\ees
}
\end{remark}

\begin{theo}\lbl{1-2-3}
If $\limsup_{t\rightarrow\infty}[h(t)-ct]>L_*$, then
$\lim_{t\to\infty}h(t)/t=c_0$ and for any given small $\epsilon>0$,
\bes
\lbl{0-ct}
 \lim_{t\rightarrow+\infty}\left[\sup_{0\leq x\leq(c-\varepsilon)t} u(t,x)\right]=0,
\ees
and
\bes\lbl{ct-h(t)}
  \lim_{t\rightarrow+\infty}\left[\sup_{(c+\varepsilon)t\leq x\leq (1-\varepsilon)h(t)}\left|u(t,x)-\frac{a}{b}\right|\right]=0.
  \ees
\end{theo}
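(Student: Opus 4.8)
\emph{Step 1 (reductions).} I would prove the three assertions in increasing order of difficulty. The statement $\lim_{t\to\infty}h(t)/t=c_0$ is almost free: under the hypothesis $\limsup_{t\to\infty}[h(t)-ct]>L_{*}$, Lemma~\ref{H^*>L_c} forces $\limsup_{t\to\infty}[h(t)-ct]=+\infty$; Lemma~\ref{prop-infty} then promotes this to $\lim_{t\to\infty}[h(t)-ct]=+\infty$, and Lemma~\ref{speed} gives exactly $h(t)/t\to c_0$. So the real content is \eqref{0-ct} and \eqref{ct-h(t)}. The upper half of \eqref{ct-h(t)} needs no geometric information: since $A(x-ct)\le a$, the solution $y(t)$ of $y'=ay-by^2$ with $y(0)=\max\{\|u_0\|_\infty,a/b\}$ is a spatially constant supersolution of \eqref{MP} (one checks $y'-A(x-ct)y+by^2=(a-A(x-ct))y\ge0$, and the free and fixed boundary conditions are trivially respected), whence $u(t,x)\le y(t)\to a/b$ and thus $u\le a/b+\delta$ on $[0,h(t)]$ for every $\delta>0$ and all large $t$.

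\emph{Step 2 (the decay \eqref{0-ct}).} On $\{x\le ct-l_0\}$ the reaction coefficient is the constant $a_0\le0$, so the species can only decay there; the difficulty is that mass keeps entering from the favourable zone and that a barrier which decays toward $x=0$ does not interact well with the no-flux condition there. I would remove the latter difficulty by extending $u$ to $\{|x|<h(t)\}$ by even reflection across $x=0$ (legitimate since $u_x(t,0)=0$), obtaining a classical solution $\hat u$ of $w_t=dw_{xx}+A(|x|-ct)w-bw^2$, whose coefficient is $a_0$ on $\{|x|\le ct-l_0\}$. Then, for $T\ge l_0/c$ and a sufficiently small $\alpha>0$, the function
\[
\bar u(t,x):=\frac{\kappa\,C_1\,s}{\,s+\alpha(s^2-x^2)\,},\qquad s:=c(t-T),\ \ |x|\le s,
\]
with $C_1$ the a~priori bound of Lemma~\ref{estimate} and $\kappa\ge1$ a fixed constant, is, once $s$ is bounded away from $0$, a supersolution of $w_t=dw_{xx}+a_0w-bw^2$; it equals $\kappa C_1\ge C_1$ on $\{|x|=s\}$ and, after adjusting $\kappa$ and the starting time, dominates $\hat u$ on the initial slice, so the comparison principle yields $u=\hat u\le\bar u$ on $\{|x|\le c(t-T)\}$. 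As $\bar u$ is increasing in $|x|$, evaluating at $x=(c-\epsilon)t$ gives $\sup_{0\le x\le(c-\epsilon)t}u(t,x)=O(1/t)\to0$, which is \eqref{0-ct}. (When $a_0<0$ the simpler exponential supersolution $C_1\cosh(\kappa x)/\cosh(\kappa c(t-T))$ works; it fails precisely when $a_0=0$, which is why the rational barrier is needed.)

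\emph{Step 3 (the interior, lower half of \eqref{ct-h(t)}).} Fix $\epsilon,\delta>0$. In the moving frame $\xi=x-ct$, $v(t,\xi):=u(t,ct+\xi)$ solves the logistic equation with coefficient $a$ on $0<\xi<g(t):=h(t)-ct$, and $g(t)\to\infty$ by Step~1. For any fixed large $\ell$ one has $g(t)>\ell$ for $t\ge t_0(\ell)$; comparing $v$ with the solution of the Dirichlet logistic problem on $(0,\ell)$ — whose principal eigenvalue is negative once $\ell$ is large, so it converges to the positive ground state, which is $\ge a/b-2\delta$ on $[K,\ell-K]$ with $K=K(\delta)$ independent of $\ell$ — yields $u(t,x)\ge a/b-2\delta$ for $ct+K\le x\le ct+\ell-K$ and $t\ge t_1$. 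Next choose $\sigma\in\big((1-\epsilon)c_0,\,c_0\big)$ and let $q_\sigma$ be the semi-wave of \eqref{semi-wave} associated with the mobility $\mu_\sigma$ determined by $c_0(\mu_\sigma)=\sigma$; since $\sigma<c_0(\mu)$ and $c_0(\cdot)$ is increasing we get $\mu_\sigma<\mu$, hence $\mu q_\sigma'(0)=\sigma\,\mu/\mu_\sigma>\sigma$. Pick $\delta'>0$ so small that $\mu(1-\delta')q_\sigma'(0)\ge\sigma$ and $(1-\delta')a/b<a/b-2\delta$, and set
\[
\underline u(t,x):=(1-\delta')\,q_\sigma\big(\underline h(t)-x\big),\qquad \underline h(t):=\sigma(t-t_1)+ct_1+\ell-2K .
\]
Then $(\underline u,\underline h)$ is a one-sided subsolution of \eqref{MP} on $\{ct+K\le x\le\underline h(t),\ t\ge t_1\}$ (where $A(x-ct)=a$): the stationary inequality holds because scaling $q_\sigma$ by $(1-\delta')$ produces a cubic defect of favourable sign, $\underline h'(t)=\sigma\le-\mu\underline u_x(t,\underline h(t))=\mu(1-\delta')q_\sigma'(0)$, $\underline u(t,ct+K)\le(1-\delta')a/b<a/b-2\delta\le u(t,ct+K)$, $\underline h(t_1)=ct_1+\ell-2K<h(t_1)$, and at $t=t_1$ the footprint of $\underline u$ is exactly $[ct_1+K,ct_1+\ell-2K]$, where the previous comparison makes $\underline u(t_1,\cdot)\le u(t_1,\cdot)$. (The large negative shift in $\underline h$ is there precisely to keep this initial footprint bounded so the comparison can be started.) The comparison principle for free boundary problems (\cite{DL}) then gives $h(t)\ge\underline h(t)$ and $u(t,x)\ge(1-\delta')q_\sigma(\underline h(t)-x)$ on $[ct+K,\underline h(t)]$. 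Because $\underline h(t)/t\to\sigma>(1-\epsilon)c_0$ while $h(t)/t\to c_0$, for all large $t$ the interval $[(c+\epsilon)t,(1-\epsilon)h(t)]$ lies inside $\{ct+K\le x\le\underline h(t)-K_\sigma\}$, where $K_\sigma$ is chosen so that $q_\sigma\ge a/b-\delta$ on $[K_\sigma,\infty)$; there $u\ge a/b-\delta$. Together with Step~1 this proves \eqref{ct-h(t)}, and the theorem.

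\emph{Where the work is.} The only delicate part is Step~3: one must first \emph{manufacture} a neighbourhood of $x=ct$ on which $u$ is already within $2\delta$ of $a/b$ (this rests on $g(t)\to\infty$, hence on Step~1), and then pour that information into a traveling semi-wave subsolution moving at a speed $\sigma$ chosen in the narrow window $((1-\epsilon)c_0,c_0)$, so that its profile stays admissible (the inequality $\mu q_\sigma'(0)>\sigma$, valid exactly because $\sigma<c_0$) while still sweeping past $(1-\epsilon)h(t)$. Tracking the footprint and the speed/width balance, and invoking the one-sided comparison principle on a domain that is prescribed on the left and free on the right, is where the care lies; Step~2 is elementary but still requires selecting the right (non-exponential) barrier when $a_0=0$.
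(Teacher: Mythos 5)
Your proposal is correct, and its Step~3 takes a genuinely different route from the paper's proof. Step~1 (deriving $\lim h(t)/t=c_0$ from Lemmas~\ref{H^*>L_c}, \ref{prop-infty}, \ref{speed}, and the ODE supersolution giving $\limsup u\le a/b$) matches the paper. For \eqref{0-ct}, the paper invokes Lemma~\ref{0-ct-M}, which also uses even reflection but compares with a \emph{stationary} barrier (the solution of $dv_{xx}=bv^2$ with large boundary data on a fixed long interval) via a sequential contradiction; your moving rational barrier $\bar u=\kappa C_1 s/(s+\alpha(s^2-x^2))$ on the expanding strip $|x|\le c(t-T)$ is an alternative that delivers an explicit $O(1/t)$ rate, and your computation (one needs $b\kappa C_1\ge 2d\alpha+8d\alpha^2+2c\alpha$ for $s\ge1$, achievable for small $\alpha$) does close; you correctly flag that the exponential barrier fails only at $a_0=0$. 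The main divergence is \eqref{ct-h(t)}: the paper argues by contradiction — it picks a sequence $(t_n,x_n)$ where $|u-a/b|>\sigma_0$, rescales $u(t+\gamma_n,x+x_n)$, and derives a lower bound $\beta>0$ on $u(\gamma_n,\cdot+x_n)$ by making the comparison estimate from Lemma~\ref{speed} \emph{uniform} in $\tilde c$ over the compact interval $I_0$ — whereas you build a \emph{direct} pointwise lower bound: first manufacture $u\ge a/b-2\delta$ on a moving band $[ct+K,ct+\ell-K]$ by Dirichlet comparison (the translation-invariance argument giving $K$ independent of $\ell$ is sound), then plant a scaled semi-wave $(1-\delta')q_\sigma$ with $\sigma\in((1-\epsilon)c_0,c_0)$ as a one-sided subsolution with left boundary $x=ct+K$, exploiting $\mu q_\sigma'(0)=\sigma\mu/\mu_\sigma>\sigma$ because $\mu_\sigma<\mu$. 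This is a clean constructive alternative; the only bookkeeping caveat is the order of parameter choices: you should fix $\epsilon$, hence $\sigma$ and $\mu_\sigma$, choose $\delta'$ small enough that $(1-\delta')\mu/\mu_\sigma\ge1$, and then \emph{define} $\delta$ in terms of $\delta'$ (e.g.\ $\delta:=\delta' a/(3b)$) rather than fixing $\delta$ first, since the two constraints $\mu(1-\delta')q_\sigma'(0)\ge\sigma$ and $(1-\delta')a/b<a/b-2\delta$ pull $\delta'$ in opposite directions; with that reordering the final bound $u\ge(1-\delta')(a/b-\delta)=a/b-O(\delta')$ does tend to $a/b$, and the geometry $[(c+\epsilon)t,(1-\epsilon)h(t)]\subset[ct+K,\underline h(t)-K_\sigma]$ for large $t$ holds because $(1-\epsilon)c_0<\sigma$.
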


We will prove \eqref{0-ct} and \eqref{ct-h(t)} separately. In fact, for later applications, 
we will prove in Lemma \ref{0-ct-M} below a slightly stronger version of \eqref{0-ct} without using the assumption $\limsup_{t\rightarrow\infty}[h(t)-ct]=\infty$.
Moreover, different from the rest of this section,  the assumption $0<c<c_0$  is also not required in Lemma \ref{0-ct-M}.

\begin{lem}
\lbl{0-ct-M}
Suppose $c>0$ and $(u,h)$ is the unique solution of \eqref{MP}. For $M>0$ define
\[
\epsilon(M):=\limsup_{t\to\infty}\left[\sup_{0\leq x\leq ct-M}u(t,x)\right].
\]
Then  $\lim_{M\to\infty}\epsilon(M)=0$.
Here we understand that $u(t,x)=0$ for $x\geq h(t)$.
\end{lem}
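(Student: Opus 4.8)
The plan is to trap $u$, in the region well behind the moving unfavourable front, between $0$ and an explicit supersolution that decays exponentially in the distance from the front. Recall from Lemma \ref{estimate} that $0<u\le C_1$ throughout the domain of the solution, and from \eqref{a} that $A(\xi)=a_0\le 0$ whenever $\xi\le -l_0$. Hence, on the moving region $\Omega_{t_0}:=\{(t,x):\,t>t_0,\ 0\le x\le ct-l_0\}$ (where $t_0>l_0/c$ is chosen so that $ct-l_0>0$ there), the function $u$ solves the logistic inequality $u_t\le du_{xx}+a_0u-bu^2$ wherever it is positive, while $u\equiv 0$ (by the stated convention) on the part of $\Omega_{t_0}$ lying to the right of $x=h(t)$.

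I would then fix an arbitrary $\delta\in(0,C_1)$ and consider
\[
\bar u(t,x):=\delta+C_1e^{-\kappa(t-t_0)}+2C_1e^{-\sigma(ct-l_0)}\cosh(\sigma x),
\]
with $\sigma>0$ and $\kappa>0$ small constants to be fixed below. The three summands play distinct roles: the middle one is a decaying ``reservoir'' making $\bar u\ge C_1\ge u$ on $\{t=t_0\}$; the last one is a valley-shaped profile ensuring simultaneously that $\bar u_x(t,0)=0$ (so the no-flux condition at $x=0$ is respected) and that $\bar u(t,ct-l_0)\ge C_1\ge u(t,ct-l_0)$ (so the influx at the front is dominated); and the constant $\delta$ supplies, via the term $b\delta^2$, the extra dissipation needed in the degenerate case $a_0=0$. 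Writing $q:=C_1e^{-\kappa(t-t_0)}\le C_1$ and $r:=2C_1e^{-\sigma(ct-l_0)}\cosh(\sigma x)\le 2C_1$ on $\Omega_{t_0}$, and using $r_t=-\sigma c\,r$, $r_{xx}=\sigma^2 r$ and $a_0\le 0$, a direct computation on $\{0<x<ct-l_0\}$ gives
\[
\bar u_t-d\bar u_{xx}-a_0\bar u+b\bar u^2\ \ge\ b\delta^2-\kappa C_1-2C_1(\sigma c+d\sigma^2),
\]
which is $\ge 0$ once $\sigma$, and then $\kappa$, are taken small enough (depending only on $\delta,C_1,b,c,d$). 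Thus $\bar u$ is a supersolution of $z_t=dz_{xx}+a_0z-bz^2$ on $\Omega_{t_0}$ dominating $u$ on the parabolic boundary.

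Since $\bar u\ge\delta>0=u(t,h(t))$, the free boundary can never be the site of a first crossing of $u$ and $\bar u$, so the comparison principle (maximum principle together with Hopf's lemma, as in \cite{DL}) yields $u(t,x)\le\bar u(t,x)$ for all $t\ge t_0$ and $0\le x\le ct-l_0$. Finally, for $M\ge l_0$ and $0\le x\le ct-M$ one has $\cosh(\sigma x)\le e^{\sigma(ct-M)}$, hence $u(t,x)\le\delta+C_1e^{-\kappa(t-t_0)}+2C_1e^{-\sigma(M-l_0)}$; letting $t\to\infty$ gives $\epsilon(M)\le\delta+2C_1e^{-\sigma(M-l_0)}$, and then $M\to\infty$ gives $\lim_{M\to\infty}\epsilon(M)\le\delta$. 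Since $\delta\in(0,C_1)$ was arbitrary and $\epsilon(M)$ is nonincreasing in $M$, we conclude $\lim_{M\to\infty}\epsilon(M)=0$.

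The step I expect to be most delicate is the design of $\bar u$: it must at once dominate the $O(1)$ influx at the moving boundary $x=ct-l_0$ for all large $t$, be compatible with the Neumann condition at the fixed boundary $x=0$ (which rules out a plain decaying exponential and forces the $\cosh$-profile, since any supersolution that is large near the front must be nondecreasing in $x$ there), and be as small as one wishes far behind the front. The genuinely delicate point is the degenerate case $a_0=0$, where the linear part of the equation provides no decay; it is handled by the $\delta$-term, which trades a negligible loss of accuracy for the dissipation $b\delta^2$ needed to absorb the drift/diffusion cost $2C_1(\sigma c+d\sigma^2)$.
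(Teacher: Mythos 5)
Your proof is correct, and it takes a genuinely different route from the paper.  The paper argues indirectly: assuming the conclusion fails, it extracts a sequence $(t_n,x_n)$ deep behind the front where $\tilde u(t_n,x_n)>\sigma_0$, translates the (evenly reflected) solution to center a fixed window at those points, observes that $A\equiv a_0$ on the window, and then compares with the solution $\tilde v_l$ of the Cauchy--Dirichlet problem for $v_t-dv_{xx}=-bv^2$ on $(-l,l)$ with boundary values $M_0$; the contradiction comes from the fact (Lemma~2.2 of \cite{DM}) that the stationary profile $v_l^*$ tends to $0$ on compacts as $l\to\infty$.  You instead build an explicit supersolution $\bar u=\delta+C_1e^{-\kappa(t-t_0)}+2C_1e^{-\sigma(ct-l_0)}\cosh(\sigma x)$ on the moving domain $\{t>t_0,\,0\le x\le ct-l_0\}$ and push it through the comparison principle directly.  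I checked the arithmetic: $r_t=-\sigma c r$, $r_{xx}=\sigma^2 r$, $r\le 2C_1$, $q\le C_1$, so the residual is at least $b\delta^2-\kappa C_1-2C_1(\sigma c+d\sigma^2)$, which one makes nonnegative by taking $\sigma$ and then $\kappa$ small (depending on $\delta$); the $\cosh$ gives $\bar u_x(t,0)=0$ while $\cosh(\sigma(ct-l_0))\ge\tfrac12 e^{\sigma(ct-l_0)}$ gives $\bar u\ge C_1$ on the right edge; on $\{t=t_0\}$ the $q$-term supplies $\bar u\ge C_1$; and since $\bar u\ge\delta>0=u(t,h(t))$ the free boundary never hosts a first touching, so the comparison (first-touching-time argument plus Hopf at $x=0$, noting the extended $u$ is a weak subsolution across $x=h(t)$ because $u_x$ jumps up there) is valid.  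The final bound $\epsilon(M)\le\delta+2C_1e^{-\sigma(\delta)(M-l_0)}$ then gives the claim by letting $M\to\infty$ and then $\delta\to 0$.  Your argument is shorter and quantitatively sharper behind the front, at the cost of having to hand-craft the barrier (and notably the $\delta$-term to cover the degenerate case $a_0=0$); the paper's version is softer and recycles an off-the-shelf stability fact, avoiding any explicit barrier construction, which makes it more robust to changes in the nonlinearity.
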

\begin{proof}\,
Let us define
\[
\tilde{u}(t,x)=
\begin{cases}
u(t,x),\ \ &t>0,\  x\geq 0,\\
u(t,-x),\ \ &t>0,\  x<0.
\end{cases}
\]Then $\tilde u$ satisfies 
\[
\left\{
\begin{array}{ll}
\tilde u_t-d\tilde u_{xx}=A(|x|-ct)\tilde u-b \tilde u^2, & t>0,\; x\in (-h(t), h(t)),\vspace{0.2cm}\\
\tilde u(t, -h(t))=\tilde u(t, h(t))=0, & t>0.
\end{array}
\right.
\]
Suppose that  $\lim_{M\to\infty}\epsilon(M)=0$ does not hold. Then  there exist $\sigma_0>0$ and a sequence $M_n\to\infty$
such that $\epsilon(M_n)> \sigma_0$ for all $n\geq 1$. Therefore we can find a
 sequence of points $(t_n,x_n)$ satisfying $t_n\rightarrow \infty$ and $ x_n\in[0, ct_n-M_n]$ such that
\[
\tilde{u}(t_n,x_n)> \sigma_0.
\]
Set
\[
 \tau_n:=t_n-\frac{M_n}{3c},
\]
and  for  $ 0<t<t_n-\tau_n$ and $x\in \left(-\frac{M_n}{2}, \frac{ M_n}{2}\right)\cap \big(-h(t+\tau_n)-x_n, h(t+\tau_n)-x_n\big)$,
define
\[
v_n(t,x):=\tilde{u}(t+\tau_n,x+x_n).
\]
 Obviously, 
\[
  t_n-\tau_n=\frac{M_n}{3c}\to\infty, \;  \tau_n\geq \frac{2}{3}t_n \to\infty,
\]
and for
 $0\leq t\leq t_n-\tau_n$ and $-\frac{ M_n}{2} <x<\frac{ M_n}{2}$, we have
\[
|x+x_n| -c(t+\tau_n)\leq \frac{M_n}{2}+ct_n-M_n-c\tau_n= -\frac{M_n}{6}.
\]
Hence for all large $n$, $A(|x+x_n| -c(t+\tau_n))=a_0$, and
$v_n(t,x)$ satisfies,  for  $ 0<t<t_n-\tau_n$ and $x\in \left(-\frac{M_n}{2}, \frac{M_n}{2}\right)\cap \big(-h(t+\tau_n)-x_n, h(t+\tau_n)-x_n\big)$,
\[
\frac{\partial v_n}{\partial t}-d\frac{\partial^2v_n}{\partial x^2}=a_0 v_n-bv_n^2,
\]
and 
\[
v_n(t_n-\tau_n,0)=\tilde{u}(t_n,x_n)> \sigma_0.
\]

Set $M_0:=\max\{a/b, \|u_0\|_\infty\}$ and let  $\tilde v_{l}$ be the unique positive solution of the following initial-boundary value problem
\begin{equation}\label{eq4.6}
\begin{cases}
v_t-dv_{xx}=-bv^2,\ \ \ \ &t>0,\,-l<x<l,\\
v(t,-l)=v(t,l)=M_0,\ \ & t>0,\\
v(0,x)=M_0, \ \ & -l<x<l.
\end{cases}
\end{equation}
Then, $\tilde v_l(t,\cdot)$ converges to $v^*_l$ uniformly in $[-l,l]$ as $t\to\infty$, where $v_l^*$ is the unique positive solution of
\[
\begin{cases}
 dv_{xx}=bv^2,\ \ & -l<x<l,\\
 v(-l)=v(l)=M_0.
\end{cases}
\]
Using Lemma 2.2 in \cite{DM}, we have $\lim_{l\rightarrow+\infty}v_l^*(x)=0$ uniformly in any compact subset of $\bR^1$.
Fix large $l>0$ such that $v^*_l(0)<\sigma_0/2$. Then, we choose large $t_0>0$ such that for $t\geq t_0$, $\tilde v_l(t,0)<\sigma_0$. A simple comparison consideration yields $\tilde u\leq M_0$. Hence for all large $n$ such that $\tilde M_n/2>l$, we can compare $v_n$ with $\tilde v_l$ by the comparison principle to obtain
\[
v_n(t,x)\leq \tilde v_{l}(t,x)\ \ \mbox{for}\ 0<t<t_n-\tau_n,\ x\in [-l, l].
\]
Since $t_n-\tau_n\to\infty$ as $n\to\infty$, we have $t_n-\tau_n>t_0$ for all large $n$, and hence
\[
\tilde u(t_n,x_n)=v_n(t_n-\tau_n,0)\leq \tilde v_l(t_n-\tau_n,0)<\sigma_0 \mbox{ for all large } n,
\]
which contradicts $\tilde u(t_n,x_n)> \sigma_0$.
This completes the proof.
 \end{proof}

\begin{proof}[{\bf Proof of Theorem \ref{1-2-3}}]
By Lemma \ref{H^*>L_c}, we necessarily have $\limsup_{t\to\infty}[h(t)-ct]=+\infty$. Hence we can apply
 Lemma \ref{speed} to conclude that $\lim_{t\rightarrow\infty}\frac{h(t)}{t}=c_0$, and
 \eqref{0-ct} is clearly a consequence of Lemma \ref{0-ct-M}. 

It remains to prove \eqref{ct-h(t)}, and
this will be accomplished by an indirect argument. Suppose that there exists $\sigma_0>0$ such that for some small $\epsilon_0>0$ and
some sequence of points $(t_n,x_n)$ satisfying $t_n\rightarrow\infty,\,x_n\in[(c+\epsilon_0)t_n,(1-\epsilon_0)h(t_n)]$, we have
\[
|u(t_n,x_n)-\frac{a}{b}|>\sigma_0.
\]

Choose $\delta>0$ small so that
\[
1-\frac{\epsilon_0}{3}<(1-\delta)\Big(1-\frac{\epsilon_0}{4}\Big),\; c+\frac{\epsilon_0}{2}>(1-\delta)\Big(c+\frac{\epsilon_0}{4}\Big).
\]
Then define
\[
\gamma_n:=(1-\delta)t_n,
\]
\[
\Omega_n:=\big\{(t,x): 0\leq t\leq t_n-\gamma_n, \;-\frac{\epsilon_0}{2}t_n<x<\frac{\epsilon_0}{2}h(t_n)\big\},
\]
and
\[
v_n(t,x):=u(t+\gamma_n,x+x_n)\ \ \mbox{for}\ (t,x)\in\Omega_n.
\]
Then we have
\bes \lbl{a-b-sigma}
|v_n(t_n-\gamma_n,0)-\frac{a}{b}|>\sigma_0 \ \mbox{for all}\ n.
\ees
Moreover, for $(t,x)\in\Omega_n$,
\[
x+x_n-c(t+\gamma_n)\geq -\frac{\epsilon_0}{2}t_n+(c+\epsilon_0)t_n-ct_n=\frac{\epsilon_0}{2}t_n\to\infty,
\]
and in view of our choice of $\delta$ and the property $\lim_{t\to\infty}h(t)/t=c_0$, we also have, for all large $n$,
\bes\lbl{x+x_n<}
x+x_n\leq \left(1-\frac{\epsilon_0}{2}\right)h(t_n)\leq \left(1-\frac{\epsilon_0}{3}\right)c_0t_n\leq \left(1-
\frac{\epsilon_0}{4}\right)c_0\gamma_n,
\ees
\bes
\lbl{x+x_n>}
x+x_n\geq \left(c+\frac{\epsilon_0}{2}\right)t_n>\left(c+\frac{\epsilon_0}{4}\right)\gamma_n.
\ees
Therefore, for $(t,x)\in\Omega_n$ with large $n$, 
\[
A(x+x_n-c(t+\gamma_n))=a
\]
and $v_n$ satisfies
\[
\frac{\partial v_n}{\partial t}=d\frac{\partial^2v_n}{\partial x^2}+av_n-bv_n^2.
\]

We recall that the comparison principle gives $\tilde u(t,x)\leq \bar{w}(t)$ for $t>0$ and $x\in [0,h(t)]$, where
$
  \bar{w}(t)$
is the solution of the problem
\[
\frac{d\bar{w}}{dt}=a\bar{w}-b\bar{w}^2,\ t>0;\ \ \bar{w}( 0)=M.
\]
Since $\lim_{t\rightarrow\infty} \bar{w}(t) =\frac{a}{b}$, we deduce
\begin{equation}\label{eq4.8}
 \overline{\lim}_{t\rightarrow\infty} u(t,x)\leq \frac{a}{b}\ \ \  \text{uniformly for}\ \  x\in [0,h(t)].
\end{equation}
In view of (\ref{a-b-sigma}), this implies 
\bes\lbl{v-n-sigma}
v_n(t_n-\gamma_n,0)<\frac ab-\sigma_0\  \mbox{for all large } \ n.
\ees

Now, choose large $l$ satisfying $\tilde w_l^*(0)>a/b-{\sigma_0}/{2}$ (see \cite{D}), where $\tilde w_l^*$ is the unique positive solution of
\[
\begin{cases}
 -dw''=aw-bw^2,\ \ & -l<x<l,\\
 w(-l)=w(l)=0.
\end{cases}
\]
Fix such an $l$. For all large $n$ we have $\frac{\epsilon_0}{2}t_n>l$ and
\begin{equation}\lbl{v-n-equation-2}
\begin{cases}
\frac{\partial v_n}{\partial t}=d\frac{\partial^2v_n}{\partial x^2}+av_n-bv_n^2,\ \ \ &0<t<t_n-\gamma_n,\ -l<x<l,\\
v_n(t,-l)>0,\ v_n(t,l)>0,\ \ \ & 0<t<t_n-\gamma_n,\\
v_n(0,x)=u(\gamma_n,x+x_n),\ \ & -l<x<l.
\end{cases}\end{equation}

We will show that, there exists $\beta>0$ such that for all large $n$, say $n\geq n_0$, we have
\bes\lbl{beta}
u(\gamma_n,x+x_n)\geq  \beta \mbox{ for } -l<x<l.
\ees

Assuming \eqref{beta}, we now derive a contradiction.
Let $\tilde w_l(t,x)$ be the unique positive solution of
\begin{equation}\lbl{tilde-w-equation}
\begin{cases}
w_t=dw_{xx}+aw-bw^2,\ \ \ &t>0,\ -l<x<l,\\
w(t,-l)=0,\ w(t,l)=0,\ \ \ & t>0,\\
w(0,x)=\beta,\ \ & -l<x<l.
\end{cases}
\end{equation}
Since $\tilde w_l(t,\cdot)$ converges to $\tilde w_l^*$ uniformly as $t\to\infty$, there exists $T_*>0$ such that
\bes\lbl{t>T*}
\tilde w_l(t,0)>\frac{a}{b}-{\sigma_0}\ \mbox{when}\ t\geq T_*.
\ees
Due to \eqref{beta}, we can apply the comparison principle to (\ref{v-n-equation-2}) and (\ref{tilde-w-equation}) to conclude that
\[
\tilde w_l(t,x)\leq v_n(t,x)\ \mbox{for}\ \ 0\leq t\leq t_n-\gamma_n, \ -l<x<l.
\]
In view of $t_n-\gamma_n\to\infty$ and (\ref{t>T*}), we thus obtain
\[
v_n(t_n-\gamma_n,0)\geq \tilde w_l(t_n-\gamma_n, 0)>\frac{a}{b}-{\sigma_0} \mbox{ for all large } n,
\]
which contradicts (\ref{v-n-sigma}).

To complete the proof of the theorem, we still have to show \eqref{beta}. This will be done by a careful examination of the 
  proof of Lemma \ref{speed}. We first observe that for all large $n$, 
\[
[-l, l]\subset \left[-\frac{\epsilon_0}{2}t_n, \frac{\epsilon_0}{2}h(t_n)\right],
\]
and hence by \eqref{x+x_n<} and \eqref{x+x_n>}, we have
\bes\lbl{x+x_n}
\left(c+\frac{\epsilon_0}{4}\right)\gamma_n\leq x+x_n\leq \left(1-\frac{\epsilon_0}{4}\right)c_0\gamma_n \mbox{ for } x\in [-l, l]
\mbox{ and all large } n.
\ees
To prove \eqref{beta}, we need to show that the estimate  for $u$ obtained by the comparison argument in the proof of Lemma
\ref{speed} can be made uniform in $\tilde c\in I_0:= [c+\frac{\epsilon_0}{6}, \left(1-\frac{\epsilon_0}{6}\right)c_0]$.

We now start the examination of the proof of Lemma \ref{speed}. Firstly it is easily seen that the $\epsilon>0$ in \eqref{ab-epsilon} can be chosen independent of $\tilde c\in I_0$. Next since $\tilde L=L_{\tilde c}(0)$ has a common upper bound for $\tilde c\in I_0$, we can choose $\tilde l$ there independent of $\tilde c$.
 It follows that
 the number $T_1$ in the proof there, and hence $\psi_0$ and $T=T(\psi_0)$ are independent of $\tilde c$. 
Therefore $T_0=T+T_1$ is independent of $\tilde c$. We now obtain from \eqref{u>} that, for every $\tilde c\in I_0$,
\[
u(t+T_0, x)\geq \tilde V(x-\tilde c t-\tilde l-cT_0+\frac{\tilde L}{2})
\]
for $t\geq 0$ and $x\in [\tilde c t+\tilde l+cT_0-\frac{\tilde L}{2}, \tilde c t+\tilde l+cT_0+\frac{\tilde L}{2}]$.
Taking $x=\tilde c t+\tilde l+cT_0$ we obtain
\[
u(t+T_0, \tilde c t+\tilde l+cT_0)\geq \tilde V\Big(\frac{\tilde L}{2}\Big) \mbox{ for } t>0,\; \tilde c\in I_0.
\]
Using the earlier notation $\tilde V=V_{0,\tilde c}$ and $\tilde L=L_{\tilde c}(0)$, we find that
there exists $\beta>0$ such that
\[
\tilde V\Big(\frac{\tilde L}{2}\Big)=V_{0,\tilde c}\left(\frac{L_{\tilde c}(0)}{2}\right)\geq \beta \mbox{ for } \tilde c\in I_0.
\]
Hence, if we take $t+T_0=\gamma_n$, then 
\[
\tilde c t+\tilde l+cT_0=\tilde c \gamma_n+(c-\tilde c)T_0+\tilde l,
\]
and 
\bes\lbl{u-gamma_n}
u(\gamma_n, \tilde c \gamma_n+(c-\tilde c)T_0+\tilde l)\geq \beta \mbox{ for all large } n \mbox{ and all } \tilde c\in I_0.
\ees
Due to our choice of $I_0$, for all large $n$, 
\[
\Big\{\tilde c \gamma_n+(c-\tilde c)T_0+\tilde l: \tilde c\in I_0\Big\}\supset \left[\left(c+\frac{\epsilon_0}{4}\right)\gamma_n, \left(1-\frac{\epsilon_0}{4}\right)c_0\gamma_n\right].
\]
Thus, in view of \eqref{x+x_n},  the required estimate \eqref{beta} follows from \eqref{u-gamma_n}. The proof is complete.
\end{proof}

\subsection{The case of vanishing}

We first give a result which does not require $c<c_0$.
\begin{lem}\lbl{vanishing-h} For any $c>0$, the unique solution $(u,h)$ has the following property:

$\lim_{t\rightarrow\infty}\left[\max_{0\leq x\leq h(t)}u(t,x)\right]=0$ if and only if $h_\infty:=\lim_{t\to\infty}h(t)<\infty$.
\end{lem}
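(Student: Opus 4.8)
The plan is to prove the two implications separately; in both directions the decisive structural fact is that, since $c>0$, a bounded habitat $[0,h(t)]$ is eventually pushed entirely into the region where $A\equiv a_0\le 0$.

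\emph{The implication $h_\infty<\infty\Rightarrow\max_{0\le x\le h(t)}u(t,x)\to 0$.} By the Hopf lemma $h'(t)>0$, so $h$ is increasing and $h(t)\le h_\infty$ for all $t$. If $h_\infty<\infty$, then for $t\ge T_0:=(h_\infty+l_0)/c$ we have $x-ct\le h(t)-ct\le h_\infty-ct<-l_0$ for every $x\in[0,h(t)]$, hence $A(x-ct)\equiv a_0\le 0$ there and $u_t-du_{xx}-a_0u=-bu^2\le 0$ on $[0,h(t)]$ for $t\ge T_0$, together with $u_x(t,0)=0$ and $u(t,h(t))=0$. I would then compare $u$ on its moving domain with the explicit solution $\bar u(t,x)=Ke^{(a_0-\lambda_1)(t-T_0)}\phi(x)$ of the linear equation $w_t=dw_{xx}+a_0w$ on the fixed interval $[0,h_\infty]\supseteq[0,h(t)]$, where $(\lambda_1,\phi)$ is the principal eigenpair of $-d\,\phi''=\lambda\phi$ on $(0,h_\infty)$ under $\phi'(0)=0=\phi(h_\infty)$ (so $\lambda_1=d\pi^2/(4h_\infty^2)>0$, $\phi>0$), and $K$ is chosen so that $\bar u(T_0,\cdot)\ge u(T_0,\cdot)$ on $[0,h(T_0)]$. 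Since $u=0\le\bar u$ on $\{x=h(t)\}$ and both satisfy the Neumann condition at $x=0$, the maximum principle gives $u\le\bar u$ for $t\ge T_0$; because $a_0\le0<\lambda_1$, $\bar u\to 0$ uniformly (and exponentially), which proves the claim.

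\emph{The converse $\max_{0\le x\le h(t)}u(t,x)\to 0\Rightarrow h_\infty<\infty$.} Here I would argue in two steps. Step 1: show $h'(t)\to 0$. Straightening the free boundary on a fixed-width collar $[h(t)-h_0/2,\,h(t)]$ (which lies in $[0,h(t)]$ since $h(t)\ge h_0$) yields a uniformly parabolic equation on a fixed rectangle whose coefficients are bounded uniformly in $t$ by Lemma \ref{estimate}; interior--boundary $L^p$/Schauder estimates then give $|u_x(t,h(t))|\le C\sup_{s\in[t-1,t]}\|u(s,\cdot)\|_\infty\to 0$, so $h'(t)=-\mu u_x(t,h(t))\to 0$. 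Consequently $g(t):=h(t)-ct$ satisfies $g'(t)=h'(t)-c\to-c<0$, hence $g(t)\to-\infty$ and there is $T_0$ with $h(t)<ct-l_0$, i.e. $A(x-ct)\equiv a_0$ on $[0,h(t)]$, for all $t\ge T_0$. Step 2 is the key $L^1$ mass identity: for $t\ge T_0$, integrating the equation over $[0,h(t)]$ and using $u_x(t,0)=0$, $u_x(t,h(t))=-h'(t)/\mu$, $a_0\le 0$ and $u\ge 0$,
\[
\frac{d}{dt}\int_0^{h(t)}u\,dx=-\frac d\mu\,h'(t)+a_0\!\int_0^{h(t)}\!u\,dx-b\!\int_0^{h(t)}\!u^2\,dx\ \le\ -\frac d\mu\,h'(t).
\]
Integrating from $T_0$ to $T$ gives $\tfrac d\mu\bigl(h(T)-h(T_0)\bigr)\le\int_0^{h(T_0)}u(T_0,x)\,dx$, so $h$ is bounded and $h_\infty<\infty$.

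\emph{Main obstacle.} The substantive difficulty is the converse implication: the bound $h'(t)\to 0$ by itself does not give $\int_0^\infty h'(t)\,dt<\infty$, so decay of $u$ cannot be turned into boundedness of $h$ in a purely soft way. The resolution is to use $c>0$ to move the whole habitat into $\{A=a_0\le 0\}$ and then let the mass identity convert the (nonnegative) boundary speed $h'$ into the (nonpositive) reaction terms. A secondary technical point is keeping the parabolic estimate of Step 1 uniform in $t$ without assuming $h$ is already bounded, which is why one straightens only a fixed-width collar of the free boundary rather than the whole domain.
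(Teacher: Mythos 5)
Your proof is correct, and it takes a genuinely different route from the paper's for the key implication $\max u\to 0\ \Rightarrow\ h_\infty<\infty$. Both you and the paper first establish $h'(t)\to 0$ by a compactness/regularity argument in the boundary-straightened coordinates (your version is stated more quantitatively, $|u_x(t,h(t))|\le C\sup_{[t-1,t]}\|u\|_\infty$, while the paper argues by contradiction via a convergent subsequence, but the mechanism is the same uniform $C^{1+\alpha}$ bound from $L^p$ estimates with bounded coefficients). After that the approaches diverge. The paper builds an explicit cosine supersolution $\tilde u(t,x)=me^{-\alpha t}\cos(\pi x/(2\tilde h(t)))$ with the expanding pseudo-front $\tilde h(t)=h(T)(3-e^{-\alpha t})$, carefully tuned so the free-boundary inequality $-\mu\tilde u_x(t,\tilde h(t))\le\tilde h'(t)$ and the PDE inequality both hold, and then reads off $h(t+T)\le 3h(T)$. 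You instead use the $L^1$ mass identity: once the habitat has been pushed into $\{A\equiv a_0\le 0\}$, the identity $\frac{d}{dt}\int_0^{h(t)}u\,dx=-\frac d\mu h'(t)+a_0\int u-b\int u^2\le -\frac d\mu h'(t)$ converts the nonnegativity of $\int u$ directly into $h(T)\le h(T_0)+\frac{\mu}{d}\int_0^{h(T_0)}u(T_0,x)\,dx$. This is shorter and cleaner, and it makes transparent that the mechanism is the sign $a_0\le 0$ (the reaction terms are a sink). For the easy implication $h_\infty<\infty\ \Rightarrow\ \max u\to 0$, you give a self-contained eigenvalue supersolution $Ke^{(a_0-\lambda_1)(t-T_0)}\phi(x)$ on the fixed interval $[0,h_\infty]$, whereas the paper simply invokes its Lemma \ref{0-ct-M}; your argument is more elementary but the paper's reuses machinery it needs anyway. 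Both proofs are sound; yours trades the paper's supersolution construction for an integral identity, which is a nice simplification of the hard step.
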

\begin{proof}\,
 Suppose $\lim_{t\rightarrow\infty}\left[\max_{0\leq x\leq h(t)}u(t,x)\right]=0$. We first prove that
\bes\lbl{h'}
\lim_{t\rightarrow\infty}h'(t)=0.
\ees
Since $h'(t)\geq 0$,  it suffices to show $\limsup_{t\rightarrow\infty}h'(t)\leq 0$.
If this is not true, then there exist $\epsilon_0>0$ and  a sequence $\{t_n\}$ such that
\[
\lim_{n\to\infty}t_n=\infty\ \ \mbox{and}\ h'(t_n)\geq\epsilon_0 \ \mbox{for all} \ n.
\]
By (\ref{MP}), we have $u_x(t_n,h(t_n))=-h'(t_n)/\mu\leq-\epsilon_0/\mu$.
Let $w(t,y)=u(t,y+h(t))$ for $t>0$ and $-h(t)\leq y\leq 0$.
Then $w(t,y)$ satisfies $w_y(t_n,0)\leq-\epsilon_0/\mu$ and
\bess\left\{\begin{array}{ll}
\medskip\displaystyle w_t=dw_{yy}+h'(t)w_y+A(y+h(t)-ct)w-bw^2,\ \ & t>0, -h(t)<y<0,\\
\medskip\displaystyle w_y(t,-h(t))=w(t,0)=0,\ \ & t>0,\\
\displaystyle w(0,y)=u_0(0,y+h_0),\ \ & -h_0\leq y\leq 0.
\end{array}\right.
\eess
Since $h'(t)$, $A(y+h(t)-ct)$ and $w(t,x)$ are all bounded in the $L^\infty$ norm, for any fixed $L\in(0,h_0]$, by $L^p$ estimates and Sobolev embeddings, there exist positive constants $\alpha\in (0,1)$ and $D>0$ such that
\bes\lbl{w-leq-D}
\|w\|_{C^{\frac{1+\alpha}{2},1+\alpha}{([1,\infty)\times[-L,0]})}\leq D.
\ees
Since $\lim_{t\rightarrow\infty}\left[\max_{0\leq x\leq h(t)}u(t,x)\right]=0$, necessarily $w(t_n,x)$ converges to $0$ uniformly in $[-L,0]$ as $n\rightarrow\infty$. By  (\ref{w-leq-D}) and a standard compactness consideration, there exists a subsequence of $\{t_n\}$, still denoted by itself, such that
 $$w(t_n,y)\rightarrow 0\ \mbox{in}\ C^1([-L,0])\ \mbox{as}\ n\rightarrow\infty.$$
It follows that $w_y(t_n,0)\rightarrow 0$, which is a contradiction to $w_y(t_n,0)\leq-\epsilon_0/\mu$. This proves \eqref{h'}.

We next show that $h_\infty<+\infty$.
Take
 \[
m:=\frac{d\pi}{9\mu}.
\]
 Due to $\lim_{t\rightarrow\infty}\left[\max_{0\leq x\leq h(t)}u(t,x)\right]=0$ and (\ref{h'}), there exists $T>0$ such that
\[
\mbox{ $u(T,x)\leq \frac{m}{\sqrt{2}}$ for $x\in[0,h(T)]$ and $h(t)<ct-l_0$ for all $t\geq T$.}
\]
Therefore $A(x-ct)=a_0$ for $t\geq T$ and $x\in [0, h(t)]$, and so 
\bess\left\{\begin{array}{ll}
\medskip\displaystyle u_t=du_{xx}+a_0 u-bu^2,\ \ & t>T, 0<x<h(t),\\
\medskip\displaystyle u_x(t,0)=u(t,h(t))=0, \ \ & t>T, \\
\displaystyle-\mu u_x(t,h(t))=h'(t) \ \ & t>T.
\end{array}\right.
\eess
Set 
$$
\alpha:=\frac{d\pi^2}{36h(T)^2},\; \tilde h(t):=h(T)(3-e^{-\alpha t}),
$$
$$
\tilde u(t,x):=me^{-\alpha t}\cos\left(\frac{\pi}{2}\frac{x}{\tilde h(t)}\right).
$$
A direct calculation gives
$$
\tilde u(0,x)\geq m\cos \left(\frac{\pi}{4}\right)= \frac{m}{\sqrt{2}}\geq u(T,x) \mbox{ for } \ 0\leq x\leq h(T),
$$
$$
\tilde u_x(t,0)=0,\ \ \tilde u(t,\tilde h(t))=0\ \mbox{for} \ t>0,
$$
$$
-\mu\tilde u_x(t,\tilde h(t))=\mu me^{-\alpha t}\frac{\pi}{2\tilde h(t)}\leq\mu m\frac{\pi}{4h(T)}e^{-\alpha t}
= \alpha h(T)e^{-\alpha t}=\tilde h'(t)\ \mbox{for}\ t>0,
$$
and
\bess
&&\tilde u_t-d\tilde u_{xx}-a_0 \tilde u+b\tilde u^2\\
&=&\left(-\alpha+\frac{\pi x\tilde h'(t)}{2\tilde h(t)^2}\tan(\frac{\pi x}{2\tilde h(t)})
+\frac{d\pi^2}{4\tilde h(t)^2}-a_0+bme^{-\alpha t}\cos(\frac{\pi x}{2\tilde h(t)})\right)\tilde u\\
&\geq&\left(\frac{d\pi^2}{36h(T)^2}-\alpha\right)\tilde u=0 \ \mbox{for} \ t>0, 0<x<\tilde h(t).
\eess
So, by the comparison principle,
\[
h(t+T)\leq \tilde h(t)=h(T)(3-e^{-\alpha t})\leq 3h(T)\ \mbox{for}\ t>0,
\]
\[
u(t+T,x)\leq \tilde u(t,x)\ \mbox{for}\ t>0 \ \mbox{and}\ 0\leq x\leq h(t+T).
\]
The first inequality clearly implies $h_\infty<\infty$.

Conversely, suppose $h_\infty<\infty$.  Then from Lemma \ref{0-ct-M} we immediately
obtain 
\[
\lim_{t\rightarrow\infty}\left[\max_{0\leq x\leq h(t)}u(t,x)\right]=0.
\]
The proof is complete.
\end{proof}

\begin{theo}\lbl{H^*<L_{c,*}}
If $\limsup_{t\rightarrow\infty}[h(t)-ct]<L_{*}$, then $h_\infty:=\lim_{t\to\infty}h(t)<+\infty$ and 
\bes\lbl{u-0}
\lim_{t\rightarrow\infty}\left[\max_{0\leq x\leq h(t)}u(t,x)\right]=0.
\ees
\end{theo}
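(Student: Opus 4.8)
The plan is to reduce the whole statement, via Lemma~\ref{vanishing-h}, to the single assertion $\lim_{t\to\infty}[h(t)-ct]=-\infty$. Granting this, for all large $t$ we have $h(t)-ct<-l_0$, so $A(x-ct)\equiv a_0\le 0$ on $[0,h(t)]$; comparing $u$ with the spatially homogeneous solution of $w'=a_0w-bw^2$ gives $\max_{0\le x\le h(t)}u(t,x)\to 0$, and then, exactly as in the proof of Lemma~\ref{vanishing-h}, the supersolution $me^{-\alpha(t-T)}\cos\!\big(\tfrac{\pi x}{2\tilde h(t)}\big)$ with $\tilde h(t)=h(T)\big(3-e^{-\alpha(t-T)}\big)$, for a suitably large $T$, forces $h_\infty\le 3h(T)<\infty$. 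Together with Lemma~\ref{vanishing-h} this yields $h_\infty<\infty$ and \eqref{u-0}.

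To prove $h(t)-ct\to-\infty$, note first that by Lemma~\ref{H^*>L_c} and the hypothesis $H^*:=\limsup_{t\to\infty}[h(t)-ct]<L_*$; assume, for contradiction, that $H^*>-\infty$. I would first upgrade this to $\lim_{t\to\infty}[h(t)-ct]=H^*$: otherwise $[h(t)-ct]-L$ changes sign infinitely often for some $L<H^*\le L_*$, and --- using a comparison profile $W_{l,L'}$ with $W_{l,L'}(-l)=M$, $W_{l,L'}(L')=0$, $-\mu W_{l,L'}'(L')=c$ (obtained for a suitable right endpoint $L'<L_*$ from Lemma~\ref{lem4}, the inequality $W_{l,L_*}>V_*$ and continuous dependence on the endpoint), together with the bound $u<M$ --- the zero-number bookkeeping of Claim~2 in the proof of Lemma~\ref{prop3} (transcribed in the proof of Lemma~\ref{L_{*}}) yields a contradiction. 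Hence $\lim_{t\to\infty}[h(t)-ct]=H^*\in(-\infty,L_*)$, and in particular $h(t)\to\infty$.

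Now I would follow the scheme of the proof of Lemma~\ref{H^*>L_c}: for a sequence $t_n\to\infty$ put $w_n(t,x)=u(t+t_n,x+h(t+t_n))$ and pass to the limit, using the bounds of Lemma~\ref{estimate} and $\lim_{t\to\infty}[h(t)-ct]=H^*$, to obtain a nonnegative $\hat w$ on $\bR^1\times(-\infty,0]$ with
\[
\hat w_t=d\hat w_{xx}+c\hat w_x+A(x+H^*)\hat w-b\hat w^2,\qquad \hat w(t,0)=0,\qquad -\mu\hat w_x(t,0)=c,
\]
and $\hat w>0$ for $x<0$; moreover Lemma~\ref{0-ct-M} gives $\hat w(t,x)\to 0$ as $x\to-\infty$, uniformly in $t$. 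Since the Cauchy data $\big(\hat w(t,0),\hat w_x(t,0)\big)=(0,-c/\mu)$ on $\{x=0\}$ does not depend on $t$, a zero-number/parabolic-unique-continuation argument forces $\hat w_t\equiv 0$, so $V:=\hat w(0,\cdot)$ is a positive solution on $(-\infty,0)$ of $dV''+cV'+A(x+H^*)V-bV^2=0$ with $V(0)=0$, $V(-\infty)=0$, $-\mu V'(0)=c$. Then $\tilde V(y):=V(y-H^*)$ is a positive solution on $(-\infty,H^*)$ of $d\tilde V''+c\tilde V'+A(y)\tilde V-b\tilde V^2=0$ with zero limits at $-\infty$ and $H^*$ and with $-\mu\tilde V'(H^*)=c$. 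Comparing $\tilde V$ on $[-l,H^*]$ with the solution $W_{l,H^*}$ of \eqref{M-L-*} (with $L_*$ replaced by $H^*$), which for all large $l$ satisfies $-\mu W_{l,H^*}'(H^*)<c$ by Lemma~\ref{lem4}, the Hopf boundary lemma gives $-\mu\tilde V'(H^*)<c$ --- contradicting $-\mu\tilde V'(H^*)=c$. This contradiction proves $h(t)-ct\to-\infty$ and completes the proof.

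The main obstacle is the construction of the entire-solution limit $\hat w$ together with the proof that it is stationary: one must pass the free-boundary relation through the limit, extract the \emph{uniform} decay at $-\infty$ from Lemma~\ref{0-ct-M} (which is what makes the comparison machinery of \cite{DM} applicable on the half-line), and exclude a genuinely $t$-dependent $\hat w$. The preliminary step replacing $\limsup[h(t)-ct]$ by $\lim[h(t)-ct]$, while delicate, is a direct adaptation of the zero-number arguments already developed in Lemmas~\ref{prop3} and \ref{L_{*}}.
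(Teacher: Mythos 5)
Your proposal takes a genuinely different route from the paper's, and it has two underjustified steps that the paper's argument deliberately avoids.

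The first is the ``upgrade $\limsup$ to $\lim$'' step. The sign-change/zero-number bookkeeping of Lemma~\ref{prop3} (Claim~2) and Lemma~\ref{L_{*}} requires a comparison profile $W$ with $W(L')=0$ and $-\mu W'(L')=c$ at a level $L'$ strictly between $\liminf_{t\to\infty}[h(t)-ct]$ and $H^*=\limsup_{t\to\infty}[h(t)-ct]$; otherwise $h(t)-ct-L'$ does not change sign infinitely often and no contradiction results. In Lemma~\ref{L_{*}} this works because $\limsup=L_*$, so the intermediate-value argument between $-\mu W_{l,L_0}'(L_0)<c$ (Lemma~\ref{lem4}) and $-\mu W_{l,L_*}'(L_*)>c$ automatically lands $L'\in(L_0,L_*)=(L_0,\limsup)$. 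In your setting $H^*<L_*$, and the same construction only locates $L'\in(L_0,L_*)$ with no reason to have $L'<H^*$; indeed Lemma~\ref{lem4} says $-\mu W_{l,L}'(L)<c$ for \emph{every} $L<L_*$ once $l$ is large, which pushes the crossing point $L'$ up toward $L_*$, i.e.\ above $H^*$. (The alternative family $V_{l}$ of Lemma~\ref{lem3}/Remark~\ref{l} is even worse: $L(l)>L_*>H^*$ for every $l$.) So the profile your zero-number argument needs is not supplied by the lemmas at hand, and it is doubtful it exists at all. The second gap is the assertion that the entire solution $\hat w$ is stationary. This is not what Lemma~\ref{H^*>L_c} proves (that proof keeps $\hat w$ time-dependent and instead runs a dynamic comparison against a steady state on a bounded interval, which works precisely because $H^*>L_*$ provides the reference $V_{l_*}$ with $L(l_*)=H^*$ --- unavailable when $H^*<L_*$). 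A zero-number argument comparing $\hat w(t+\tau,\cdot)$ with $\hat w(t,\cdot)$ could plausibly be carried out along the lines of Theorem~\ref{Transition}, but as written it is a placeholder, not a proof, and it is the crux of your whole approach.

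The paper sidesteps both issues by never upgrading $\limsup$ to $\lim$ and never passing to an entire-solution limit. Having fixed $L\in(H^*,L_*)$, it uses Lemma~\ref{lem4} to produce $W_{l,L}$ with the \emph{strict} inequality $-\mu W_{l,L}'(L)<c$, perturbs it to a profile $\mathcal{V}_{c-\delta}$ solving the $(c-\delta)$-equation on $(-l,L_1)$ with $-\mu\mathcal{V}_{c-\delta}'(L_1)<c-\delta$, and waits (via the auxiliary parabolic problem \eqref{v(t,x;c)}, which relaxes to $W_{l,L}$) until $u(t_1,\cdot+ct_1)\le\mathcal{V}_{c-\delta}$. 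Then $\mathcal{V}_{c-\delta}(x-ct_1-(c-\delta)t)$ is a supersolution for the free-boundary problem travelling at speed $c-\delta<c$, so by comparison $h(t)-ct\to-\infty$, contradicting $H^*>-\infty$. The strict inequality from Lemma~\ref{lem4} is exactly the supersolution condition for a slower front, so there is no need to manufacture an equality profile, upgrade $\limsup$ to $\lim$, or prove any stationarity. I would strongly encourage you to adopt this lighter, direct comparison argument.
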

\begin{proof}\,
By Lemma \ref{vanishing-h}, it suffices to prove \eqref{u-0}.
Denote  $H^*:=\limsup_{t\rightarrow\infty}[h(t)-ct]$. If $H^*=-\infty$, then \eqref{u-0} follows immediately from Lemma \ref{0-ct-M}.

Suppose next $H^*>-\infty$. Fix a constant $L\in(H^*,L_{*})$.
By Lemma \ref{lem4}, there exists
 $l>-H^*$ large such that $-\mu W_{l,L}'(L)<c$, where $W_{l,L}$ is given in Lemma \ref{lem4}.
Let $M_1>M$, $L_1>L$, and denote by $\mathcal{V}_c$  the unique positive solution of
\bes\lbl{M-1-L-1}\left\{\begin{array}{ll}
\medskip\displaystyle dV''+cV'+A(x)V-bV^2=0 \mbox{ for } t>t_0, -l<x<L_1,\\
\displaystyle V(-l)=M_1,\ V(L_1)=0.
\end{array}\right.
\ees
By continuity, $-\mu\mathcal{V}'_c(L_1)<c$ provided that
 $M_1$ is close enough to $M$ and $L_1$ is close enough to $L$. We now fix $M_1>M$ and $L_1>L$ so that
$-\mu\mathcal{V}'_c(L_1)<c$ holds.

By the comparison principle, there is small $\epsilon_0>0$ such that
\bess
\mathcal{V}_c(x)>W_{l,L}(x)+2\epsilon_0\ \ \mbox{for}\ x\in[-l,L].
\eess
By continuity, there is small $\delta>0$ such that
\bes\lbl{V-c-delta}
\mathcal{V}_{c-\delta}(x)>W_{l,L}(x)+\epsilon_0\ \ \mbox{for}\ x\in[-l,L]
\ees
and
\[
-\mu\mathcal{V}_{c-\delta}'(L_1)<c-\delta,
\]
where $\mathcal{V}_{c-\delta}$ is the unique positive solution of (\ref{M-1-L-1}) with $c$ replaced by $c-\delta$.

Since $\limsup_{t\rightarrow\infty}[h(t)-ct]=H^*<L$, there is $t_0>0$ so that
$ct_0>l$ and $h(t)-ct<L$ for all $t\geq t_0$.
We now consider the auxiliary problem
\bes\lbl{v(t,x;c)}\left\{\begin{array}{ll}
\medskip\displaystyle v_t=dv_{xx}+cv_x+A(x)v-bv^2,\ \ t>t_0, -l<x<L,\\
\medskip\displaystyle v(-l)=M, \ \ v(L)=0,\\
\displaystyle v(t_0,x)=M,
\end{array}\right.
\ees
which has a unique positive solution $v(t,x;c)$, and by the comparison principle,
\[
v(t,x;c)>u(t,x+ct)\ \mbox{for}\ t>t_0, -l<x<h(t)-ct.
\]
Clearly,
\bes\lbl{W_cl}
v(t,x;c)\rightarrow W_{l,L}(x)\ \mbox{in}\ C^{2}([-l,L])\ \mbox{as}\ t\rightarrow\infty.
\ees
Therefore, there is $t_1>t_0$ such that
\[
W_{l,L}(x)+\epsilon_0\geq u(t_1,x+ct_1)\ \ \mbox{for}\ -l\leq x\leq h(t_1)-ct_1.
\]
From (\ref{V-c-delta}), it follows that
\[
\mathcal{V}_{c-\delta}(x)\geq u(t_1,x+ct_1)\ \ \mbox{for}\ -l\leq x\leq h(t_1)-ct_1.
\]

We now define
\[
\tilde u(t,x):=\mathcal{V}_{c-\delta}(x-ct_1-(c-\delta)t)
\]
\[\xi_1(t):=ct_1+(c-\delta)t-l, \ \xi_2(t):=ct_1+(c-\delta)t+L_1.
\]
Then,
\bess
\tilde u_t&=&d\tilde u_{xx}+A(x-ct_1-(c-\delta)t)u-bu^2\\
&\geq& d\tilde u_{xx}+A(x-c(t+t_1))u-bu^2\ \ \mbox{for}\ t>0, x\in(\xi_1(t),\xi_2(t);
\eess
\[
\tilde u(t,\xi_1(t))=M_1>u(t+t_1,\xi_1(t)),\ \ t>0;
\]
\[
\tilde u(t,\xi_2(t))=\mathcal{V}_{c-\delta}(L_1)=0,\ \ t>0;
\]
\[
-\mu\tilde u_x(t,\xi_2(t))=-\mu \mathcal{V}'_{c-\delta}(L_1)<c-\delta=\xi_2'(t),\ \ t>0;
\]
\[
\xi_2(0)=ct_1+L>h(t_1);
\]
\[
\tilde u(0,x)\geq u(t_1,x) \ \mbox{for}\ x\in(\xi_1(0),\xi_2(0)).
\]
By the comparison principle, we obtain
\[
h(t+t_1)\leq \xi_2(t)=ct_1+(c-\delta)t\ \mbox{for}\ t>0,
\]
\[
u(t+t_1,x)\leq \tilde u(t,x) \ \mbox{for}\ t>0,\ \xi_1(t)<x<h(t).
\]
But the first inequality implies $H^*=-\infty$. This contradiction implies that the case $-\infty<H^*<L_*$ cannot happen.
The proof is complete.
\end{proof}

\subsection{The case of borderline spreading}

\begin{theo}\lbl{Transition}\,
If $\limsup_{t\rightarrow\infty}[h(t)-ct]=L_{*}$, then $\lim_{t\rightarrow\infty}[h(t)-ct]=L_{*}$ and
\bes\lbl{0-h}
\lim_{t\rightarrow\infty} \left[\max_{0\leq x\leq h(t)}\big|u(t,x)-V_{*}(x-h(t)+L_*)\big|\right]=0.
\ees
\end{theo}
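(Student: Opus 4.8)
The plan is as follows. The first assertion, $\lim_{t\to\infty}[h(t)-ct]=L_*$, is precisely Lemma~\ref{L_{*}}, so it remains only to prove the profile convergence \eqref{0-h}. Throughout, write $g(t):=h(t)-ct$ (so $g(t)\to L_*$) and $v(t,x):=u(t,x+ct)$, extended by $v(t,x)=0$ for $x\ge g(t)$, and extend $V_*$ by $V_*(x)=0$ for $x\ge L_*$; this extended $V_*$ is Lipschitz, hence uniformly continuous, on $\bR^1$, and $x-h(t)+L_*=(x-ct)-g(t)+L_*$. It therefore suffices to show
\[
\lim_{t\to\infty}\Big[\sup_{x\in\bR^1}\big|v(t,x)-V_*(x)\big|\Big]=0,
\]
since then \eqref{0-h} follows by the change of variable $x\mapsto x-ct$ together with the fact that $|V_*((x-ct)-g(t)+L_*)-V_*(x-ct)|\to0$ uniformly in $x$ (because $g(t)-L_*\to0$). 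I would obtain this sup-bound from two matching one-sided estimates, together with the far-field estimate of Lemma~\ref{0-ct-M} and the decay $V_*(-\infty)=0$.

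\emph{Upper estimate.} Fix $\ep>0$. Since $g(t)\to L_*$ we have $h(t)<ct+L_*+\ep$ for all large $t$, so $v(t,\cdot)\equiv0$ on $[L_*+\ep,\infty)$ for such $t$. On the fixed interval $[-l,L_*+\ep]$, with $l$ large and $t$ so large that $v(t,-l)\le M$ and $ct>l$, the comparison principle in the moving frame --- exactly as in the proof of Theorem~\ref{H^*<L_{c,*}}, cf.\ \eqref{v(t,x;c)} --- shows that $v$ is dominated by the solution of the corresponding fixed-domain problem with boundary data $M$ at $-l$ and $0$ at $L_*+\ep$; the latter converges in $C^2([-l,L_*+\ep])$ to the unique positive solution $W^\ep_l$ of $dW''+cW'+A(x)W-bW^2=0$ on $(-l,L_*+\ep)$ with $W(-l)=M$, $W(L_*+\ep)=0$. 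Hence $\limsup_{t\to\infty}v(t,x)\le W^\ep_l(x)$ on $[-l,L_*+\ep]$. Letting $l\to\infty$, $W^\ep_l$ decreases to the unique positive solution $V^\ep_*$ on $(-\infty,L_*+\ep)$ with $V^\ep_*(-\infty)=0$ (as in Remark~\ref{M}); letting $\ep\to0$, $V^\ep_*$ decreases to a positive solution of the equation on $(-\infty,L_*)$ whose limit at both ends is $0$ --- a uniform bound on $(V^\ep_*)'$ near the right endpoint shows the limit vanishes at $L_*$ --- and this solution equals $V_*$ by the uniqueness in Remark~\ref{M}. Combined with Lemma~\ref{0-ct-M} to handle $x\le-M$, this yields $\limsup_{t\to\infty}\big[\sup_{x}(v(t,x)-V_*(x))\big]\le0$.

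\emph{Lower estimate.} Fix $L<L_*$ close to $L_*$. A standard spectral and sub--supersolution argument --- using that the existence of the positive solution $V_*$ of the Dirichlet problem on $(-\infty,L_*)$ forces $\lambda_1[-l,L]<0$ for all large $l$ --- produces, for each such $l$, a unique positive solution $\phi_{l,L}\le a/b$ of $d\phi''+c\phi'+A(x)\phi-b\phi^2=0$ on $(-l,L)$ with $\phi_{l,L}(-l)=\phi_{l,L}(L)=0$. For large $t$ we have $[-l,L]\subset(-ct,g(t))$, so $v(t,\cdot)>0$ there and satisfies the same equation; comparing $v$ from below with the solution of the Dirichlet parabolic problem on $(-l,L)$ started at a large time from a positive bump under $v$, and using that this parabolic solution converges to $\phi_{l,L}$, gives $\liminf_{t\to\infty}v(t,x)\ge\phi_{l,L}(x)$ on $[-l,L]$. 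Letting $l\to\infty$ ($\phi_{l,L}$ increasing to a positive solution $\Phi_L$ on $(-\infty,L)$ with $\Phi_L(-\infty)=0$) and then $L\uparrow L_*$ ($\Phi_L$ increasing, dominated by $V_*$, with limit a positive solution on $(-\infty,L_*)$ vanishing at both ends, hence $=V_*$ by Remark~\ref{M}) yields $\liminf_{t\to\infty}v(t,x)\ge V_*(x)$ for every $x<L_*$.

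\emph{Conclusion, and the main obstacle.} The two estimates give $v(t,x)\to V_*(x)$ for each $x<L_*$; on any compact $[-M,L_*-\delta]$ this upgrades to uniform convergence, since interior parabolic estimates make $\{v(t,\cdot)\}_{t\ \mathrm{large}}$ equicontinuous there. For $x\le-M$ both $v(t,x)$ (by Lemma~\ref{0-ct-M}) and $V_*(x)$ (by monotonicity of $V_*$ near $-\infty$) are small when $M$ is large, and for $x\ge L_*-\delta$ one has $v(t,x)\le V_*(L_*-\delta)+o(1)$ from the upper estimate while $V_*(x)\le V_*(L_*-\delta)$ (as $V_*$ decreases near $L_*$), both small for $\delta$ small. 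Hence $\sup_x|v(t,x)-V_*(x)|\to0$, and \eqref{0-h} follows. I expect the lower estimate to be the crux: one must first secure the Dirichlet subsolutions $\phi_{l,L}$ for $L$ just below $L_*$ (that is, verify the principal-eigenvalue inequality in the unbounded-domain limit), and then argue that the double limit $l\to\infty$, $L\uparrow L_*$ reconstructs exactly $V_*$ --- where the uniqueness statement of Remark~\ref{M} is indispensable --- while the uniformity of the convergence in the strip adjacent to the free boundary $x=h(t)$ also needs some care.
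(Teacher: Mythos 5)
Your proof is essentially correct, but it takes a genuinely different route from the paper's. The paper passes to the free-boundary frame $w(t,x)=u(t,x+h(t))$, extracts a full-line-in-time limit $\hat w$ by compactness (as in the proof of Lemma~\ref{H^*>L_c}), and then pins it down to $V_*(\cdot+L_*)$ by a zero-number argument: the persistent boundary identity $\hat w_x(t,0)=-c/\mu=V_*'(L_*)$ produces a degenerate zero of $\hat w-V_*(\cdot+L_*)$ at $x=0$ for \emph{every} $t$, contradicting Angenent's theorem unless $\hat w\equiv V_*(\cdot+L_*)$; this is then combined with Lemma~\ref{0-ct-M} to pass from compact sets to the whole range. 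You instead work in the frame $v(t,x)=u(t,x+ct)$ and build a squeeze: from above, the auxiliary elliptic problems with data $M$ at $-l$ and $0$ at $L_*+\epsilon$ (exactly as in the proof of Theorem~\ref{H^*<L_{c,*}} and Remark~\ref{M}), whose limit as $l\to\infty$, $\epsilon\to 0$ is $V_*$ by the $V_*(-\infty)=0$ uniqueness of Remark~\ref{M}; from below, the Dirichlet logistic steady states $\phi_{l,L}$ on $(-l,L)$ with $L<L_*$, limiting to $V_*$ again. Your approach avoids zero-number arguments entirely for this theorem and does not require identifying a unique entire-in-time limit profile; in exchange you must verify the principal-eigenvalue inequality $\lambda_1[-l,L]<0$ for $L<L_*$ near $L_*$ and $l$ large (this does hold --- e.g.\ test the weighted Rayleigh quotient on $(-\infty,L_*)$ with $V_*$ itself, which has exponential decay at $-\infty$, to get $\lambda_1^{(-\infty,L_*)}\le -b\int e^{cx/d}V_*^3/\int e^{cx/d}V_*^2<0$, then use monotonicity and continuity in the domain), and you must manage the non-uniformity of the elliptic approximations near $x=-\infty$ and near $x=L_*$, both of which you handle correctly by splitting into three regions and invoking Lemma~\ref{0-ct-M} on the far left and smallness of $V_*$ near $L_*$ on the right. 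With those two points made explicit, the proof is sound; the paper's zero-number argument is shorter once the machinery is in place, while yours is more self-contained and elementary.
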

\begin{proof}\, The first conclusion has been proved in Lemma \ref{L_{*}}. It remains to prove \eqref{0-h}.

 For $ t>0$ and $ -h(t)<x<0$, define
\[ 
g(t):=h(t)-ct,\; w(t,x):=u(t,x+h(t)).
\]
Let $\{t_n\}$ be an arbitrary sequence  satisfying $t_n\rightarrow\infty$,
and define
\[
g_n(t):=g(t+t_n),\;\; w_n(t,x):=w(t+t_n,x).
\]
Then, $w_n$ and $g_n$ satisfy (\ref{hat-w_n}). By the arguments in the proof of Lemma \ref{H^*>L_c}, there exists a subsequence of $\{w_n\}$, denoted still by $\{w_n\}$ for convenience, and $\alpha\in (0,1)$, such that $w_n\rightarrow\hat w$ in $C^{\frac{1+\alpha}{2},1+\alpha}_{loc}(\bR^1\times(-\infty,0])$, $g_n'(t)\rightarrow 0$ in $C_{loc}^{\alpha/2}(\bR)$, and
$\hat w$ satisfies
\bess
\left\{\begin{array}{ll}\medskip
\displaystyle \frac{\partial\hat w}{\partial t}=d\frac{\partial^2\hat w}{\partial x^2}+c\frac{\partial\hat w}{\partial x}
                             +A(x+L_{*})\hat w-b\hat w^2,\ \ & t\in\bR^1, -\infty<x<0,\\
\medskip\displaystyle \hat w(t,0)=0,\ \ & t\in\bR^1,\\
\displaystyle -\mu\frac{\partial \hat w}{\partial x}(t,0)=c,\ \ & t\in\bR^1.
\end{array}\right.
\eess

We claim that 
\bes\lbl{hat-w-2}
\hat w(t,x)\equiv V_*(x+L_*).
\ees
We will use an argument from \cite{GLZ} to prove this claim. Arguing indirectly, we assume that \eqref{hat-w-2} is not true.
Then there exist $t_0\in\bR^1$ and $x_0<0$ such that $\hat w(t_0,x_0)\not=V_*(x_0+L_*)$. 
By continuity, there exists $\epsilon_0>0$ such that
\[
\hat w(t, x_0)\not=V_*(x_0+L_*) \mbox{ for } t\in [t_0-\epsilon_0, t_0+\epsilon_0].
\]
We now consider the function
\[
\eta(t,x):=\hat w(t,x)-V_*(x+L_*) \mbox{ for } (t,x)\in \Omega_0:=[t_0-\epsilon_0, t_0+\epsilon_0]\times [x_0,0].
\]
Clearly
\[
\eta_t=d\eta_{xx}+c\eta_x+\Big(A(x+L_*)-b\big[\hat w+V_*(\cdot+L_*)\big]\Big)\eta\;\; \mbox{ in } \Omega_0,
\]
\[
\hspace{-2cm}\eta(t,x_0)\not=0,\;\eta(t,0)=0 \mbox{ for } t\in [t_0-\epsilon_0, t_0+\epsilon_0].
\]
Therefore we can use the zero number result of Angenent \cite{Ang}  (as stated in Lemma 2.1 of \cite{DLZ})  to conclude that,
for  $t\in (t_0-\epsilon_0,t_0+\epsilon_0)$, $\mathcal{Z}(t)$, the number of zeros of $\eta(t,\cdot)$ in $[x_0, 0]$, is finite
and nonincreasing in $t$, and if $\eta(t,\cdot)$ has a degenerate zero in $[x_0,0]$, then 
\[
\mathcal{Z}(t_2)\leq \mathcal{Z}(t_1)-1\mbox{ for  } t_0-\epsilon_0<t_1<t<t_2<t_0+\epsilon_0.
\]
It follows that there can be at most finitely many values of $t\in (t_0-\epsilon_0, t_0+\epsilon_0)$ such that $\eta(t,\cdot)$ has a degenerate zero in $[x_0,0]$. On the other hand, from
\[
\hat w_x(t,0)=-\frac{c}{\mu}=V_*'(L_*) \mbox{ for all } t\in\bR^1,
\]
we see that $x=0$ is a degenerate zero of $\eta(t,\cdot)$ for every $t\in (t_0-\epsilon_0, t+\epsilon_0)$. This contradiction proves  \eqref{hat-w}.

Since $\hat w(t,x)=V_*(x+L_*)$ is uniquely determined, we conclude that 
\bes\lbl{w-V_*}
\lim_{t\to\infty} w(t,\cdot)=V_*(\cdot+L_*) \mbox{ in } C_{loc}^{1+\alpha}(-\infty, 0].
\ees
In view of $\lim_{t\to\infty}[h(t)-ct]=L_*$,
it follows that, for every $M>0$,
\bes\lbl{h-M-h}
\lim_{t\to\infty}\left[\max_{ct-M\leq x\leq h(t)}\big|u(t,x)-V_*(x-h(t)+L_*)\big|\right]=0.
\ees
Since $V_*(-\infty)=0$ and by Lemma \ref{0-ct-M}, 
\[
\limsup_{t\to\infty}\left[\max_{0\leq x\leq ct-M}u(t,x)\right]=\epsilon(M)\to 0 \mbox{ as } M\to\infty,
\]
we have
\[
\limsup_{t\to\infty}\left[\max_{0\leq x\leq ct-M}\big|u(t,x)-V_*(x-h(t)+L_*)\big|\right]=:\tilde\epsilon(M)\to 0 \mbox{ as } M\to\infty.
\]
Combining this with \eqref{h-M-h}, we obtain, for every $M>0$,
\[
\limsup_{t\to\infty}\left[\max_{0\leq x\leq h(t)}\big|u(t,x)-V_*(x-h(t)+L_*)\big|\right]= \tilde \epsilon(M).
\]
 Letting $M\to\infty$, we obtain \eqref{0-h}.
The proof is complete.
\end{proof}

\section{Parameterized initial function and trichotomy}
\setcounter{equation}{0}

Throughout this section, we suppose 
\[
0<c<c_0.
\]
 Denote
$$
\mathcal{X}(h_0):=\big\{\phi\in C^2([0,h_0]):\phi \mbox{ satisfies \eqref{u_0}}\big\}.
$$
Fix $\phi\in\mathcal{X}(h_0)$, and for $\sigma>0$, let  $(u_\sigma,h_\sigma)$ denote the unique positive solution of (\ref{MP}) with  initial value $u_0=\sigma\phi$. We will examine the long-time dynamical behavior of $(u_\sigma, h_\sigma)$ as $\sigma$ varies, and see how the trichotomy described in Theorem \ref{tri} is realized.

We will say ``$(u_\sigma, h_\sigma)$ is a vanishing solution'', or simply ``$(u_\sigma, h_\sigma)$ is vanishing'', if in Theorem \ref{tri} case (i) vanishing happens  for $(u_\sigma, h_\sigma)$. We similarly define the terms
``$(u_\sigma, h_\sigma)$ is a borderline spreading solution'', ``$(u_\sigma, h_\sigma)$ is a spreading solution'',
``$(u_\sigma, h_\sigma)$ is  borderline spreading'', and ``$(u_\sigma, h_\sigma)$ is  spreading''.

The following result is a direct consequence of the comparison principle.
\begin{lem}\lbl{v-s}
$(i)$\, If $(u_{\sigma^1},h_{\sigma^1})$ is vanishing, then for $0<\sigma\leq \sigma^1$, $(u_{\sigma},h_{\sigma})$ is also vanishing.

$(ii)$\, If $(u_{\sigma^1},h_{\sigma^1})$ is spreading, then for $\sigma\geq \sigma^1$, $(u_{\sigma},h_{\sigma})$ is also spreading.
\end{lem}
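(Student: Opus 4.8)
The plan is to deduce both statements from the monotone dependence of the free boundary problem \eqref{MP} on its initial data, combined with the characterization of vanishing and spreading in terms of $\limsup_{t\to\infty}[h(t)-ct]$ furnished by Theorem \ref{three}.

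First I would record the basic monotonicity. Since $h_\sigma(0)=h_{\sigma^1}(0)=h_0$ and $\sigma\phi\le\sigma^1\phi$ on $[0,h_0]$ whenever $0<\sigma\le\sigma^1$, the comparison principle for free boundary problems (see \cite{DL}) applies to the pair $(u_\sigma,h_\sigma)$ and $(u_{\sigma^1},h_{\sigma^1})$ and yields
\[
h_\sigma(t)\le h_{\sigma^1}(t)\quad\text{and}\quad u_\sigma(t,x)\le u_{\sigma^1}(t,x)\ \text{ for } t>0,\ 0\le x\le h_\sigma(t).
\]
In particular $h_\sigma(t)-ct\le h_{\sigma^1}(t)-ct$ for all $t>0$, hence
\[
\limsup_{t\to\infty}[h_\sigma(t)-ct]\le\limsup_{t\to\infty}[h_{\sigma^1}(t)-ct].
\]

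For part (i), suppose $(u_{\sigma^1},h_{\sigma^1})$ is vanishing. By Theorem \ref{three} this means $\limsup_{t\to\infty}[h_{\sigma^1}(t)-ct]<L_{*}$, and the displayed inequality then gives $\limsup_{t\to\infty}[h_\sigma(t)-ct]<L_{*}$ for every $\sigma\in(0,\sigma^1]$, so Theorem \ref{three}(i) shows $(u_\sigma,h_\sigma)$ is vanishing. For part (ii), if $(u_{\sigma^1},h_{\sigma^1})$ is spreading, then $\limsup_{t\to\infty}[h_{\sigma^1}(t)-ct]>L_{*}$; now $\sigma\ge\sigma^1$, so the roles of the two solutions are reversed, $h_{\sigma^1}(t)\le h_\sigma(t)$, and therefore $\limsup_{t\to\infty}[h_\sigma(t)-ct]\ge\limsup_{t\to\infty}[h_{\sigma^1}(t)-ct]>L_{*}$, whence Theorem \ref{three}(iii) gives that $(u_\sigma,h_\sigma)$ is spreading.

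There is essentially no obstacle here: the only point to verify carefully is that the comparison principle of \cite{DL} is genuinely applicable, i.e., that the two solutions start from the same interval $[0,h_0]$ with ordered initial functions --- which holds by construction --- so that the ordering of the free boundaries is inherited for all later times. Everything else is immediate from Theorem \ref{three}.
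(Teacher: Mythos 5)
Your argument is correct, and it matches what the paper intends: the paper gives no explicit proof of Lemma \ref{v-s}, stating only that it is ``a direct consequence of the comparison principle,'' and your write-up spells out exactly that — the comparison principle for free boundary problems orders the two fronts $h_\sigma(t)\le h_{\sigma^1}(t)$ (or the reverse for $\sigma\ge\sigma^1$), and the classification then follows from Theorem \ref{three}. One small remark: for part (i) you could shortcut Theorem \ref{three} entirely, since vanishing for $(u_{\sigma^1},h_{\sigma^1})$ means $(h_{\sigma^1})_\infty<\infty$, comparison gives $(h_\sigma)_\infty\le (h_{\sigma^1})_\infty<\infty$, and Lemma \ref{vanishing-h} then yields $u_\sigma\to 0$ directly; similarly for (ii) you could simply note that spreading forces $h_{\sigma^1}(t)-ct\to+\infty$, so the limsup for $h_\sigma$ is $+\infty>L_*$. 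But your route via the trichotomy is equally valid, provided one observes (as you implicitly do) that the three conditions on $\limsup[h(t)-ct]$ are exhaustive and the three outcomes are mutually exclusive, so the implications in Theorem \ref{three} upgrade to equivalences.
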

Denote
\[
S_1:=\{\sigma> 0: (u_\sigma,h_\sigma)\ \mbox{is vanishing}\},\; \; S_2:=\{\sigma>0: (u_\sigma,h_\sigma)\ \mbox{is speading}\},
\] and
\[
\sigma_*=\left\{\begin{array}{ll}\sup S_1, & \mbox{ if } S_1\not=\emptyset,\smallskip\\
0,& \mbox{ if } S_1=\emptyset,
\end{array}\right.\;\;\;
\; \sigma^*=\left\{\begin{array}{ll}\inf S_2, & \mbox{ if } S_2\not=\emptyset,\smallskip\\
+\infty,& \mbox{ if } S_2=\emptyset.
\end{array}\right.
\]

\begin{lem}\lbl{sigma-stars}
We always have $\sigma^*\geq \sigma_*>0$. If $h_0\geq L(0)$, then $\sigma^*<+\infty$.
\end{lem}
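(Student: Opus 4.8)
The statement splits into three parts: $\sigma_*>0$, $\sigma^*\ge\sigma_*$, and $\sigma^*<+\infty$ when $h_0\ge L(0)$. The middle inequality is immediate from Lemma \ref{v-s} and the trichotomy of Theorem \ref{tri}: if there were $\sigma^2\in S_2$ and $\sigma^1\in S_1$ with $\sigma^2<\sigma^1$, then Lemma \ref{v-s}$(ii)$ would make $(u_{\sigma^1},h_{\sigma^1})$ spreading, contradicting that it is vanishing. Hence every element of $S_2$ is $\ge$ every element of $S_1$, so $\sigma^*=\inf S_2\ge\sup S_1=\sigma_*$ (this also covers $S_2=\emptyset$, where $\sigma^*=+\infty$). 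It remains to prove $\sigma_*>0$ and, assuming $h_0\ge L(0)$, $\sigma^*<+\infty$.

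For $\sigma^*<+\infty$ I would use the sufficient condition for spreading recorded in Remark \ref{suff-cond}: if $h_\sigma(0)=h_0\ge L(0)$ and $\sigma\phi(x)\ge V_0(x)$ on $[0,L(0)]$, then $\lim_{t\to\infty}[h_\sigma(t)-ct]=+\infty$, so $\limsup_{t\to\infty}[h_\sigma(t)-ct]=+\infty>L_*$ and Theorem \ref{1-2-3} gives that $(u_\sigma,h_\sigma)$ is spreading, i.e. $\sigma\in S_2$. So it suffices to produce $\sigma$ with $\sigma\phi\ge V_0$ on $[0,L(0)]$. Since $\phi>0$ on $[0,h_0)$ with $\phi(0)>0$ and $\phi'(h_0)<0$, while $V_0\ge0$, $V_0(0)=V_0(L(0))=0$ and $V_0'(L(0))=-c/\mu<0$, the quotient $V_0/\phi$ extends to a continuous function on $[0,L(0)]$ (with value $V_0'(h_0)/\phi'(h_0)$ at $x=h_0$ in case $L(0)=h_0$), hence is bounded, and any sufficiently large $\sigma$ satisfies $\sigma\phi\ge V_0$ on $[0,L(0)]$. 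Thus $S_2\supset[\sigma_1,\infty)$ for some $\sigma_1<\infty$ and $\sigma^*\le\sigma_1<+\infty$.

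The main work is $\sigma_*>0$. By Lemma \ref{vanishing-h}, $(u_\sigma,h_\sigma)$ is vanishing precisely when $h_\sigma$ is bounded, so I must show $h_\sigma$ stays bounded for all small $\sigma$. The delicate point is that at $t=0$ the whole range $[0,h_0]$ lies in the favourable part of the environment ($A\equiv a>0$ there), so a decaying supersolution cannot be used from the outset; one must first let the finite favourable window close. Fix the $\sigma$-independent time $T:=(3h_0+l_0+4)/c$, so that $3(h_0+1)<cT-l_0$. On $[0,T]$, comparison with the ODE $\bar w'=a\bar w$, $\bar w(0)=\sigma\|\phi\|_\infty$, gives $u_\sigma\le\sigma\|\phi\|_\infty e^{aT}$; inserting this smallness into the usual quadratic barrier near the free boundary, $M_*[2K(h_\sigma(t)-x)-K^2(h_\sigma(t)-x)^2]$ with $M_*:=\sigma\|\phi\|_\infty e^{aT}$ and a $\sigma$-independent $K=\max\{\sqrt{a/d},\,\|\phi'\|_\infty/(\|\phi\|_\infty e^{aT})\}$, yields $h_\sigma'(t)\le 2\mu M_*K$ on $[0,T]$, hence $h_\sigma(T)\to h_0$ as $\sigma\to0^+$. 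Choose $\sigma$ so small that $h_\sigma(T)<h_0+1$ and $u_\sigma(T,\cdot)<m/\sqrt2$, $m=d\pi/(9\mu)$. Then $3h_\sigma(T)<3(h_0+1)<cT-l_0\le ct-l_0$ for every $t\ge T$, so $A(x-ct)\equiv a_0\le0$ on $[0,\tilde h(t)]$ for $t\ge T$, where $\tilde h(t):=h_\sigma(T)(3-e^{-\alpha(t-T)})$, $\alpha:=d\pi^2/(36h_\sigma(T)^2)$. Exactly as computed in the proof of Lemma \ref{vanishing-h}, $\tilde u(t,x):=me^{-\alpha(t-T)}\cos\!\big(\tfrac{\pi x}{2\tilde h(t)}\big)$ is then a supersolution of \eqref{MP} on $\{0\le x\le\tilde h(t),\ t\ge T\}$ dominating $(u_\sigma,h_\sigma)$ at time $T$, and the comparison principle for free boundary problems yields $h_\sigma(t)\le\tilde h(t)\le3h_\sigma(T)$ for all $t\ge T$. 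Hence $h_\sigma$ is bounded, $(u_\sigma,h_\sigma)$ is vanishing, and $\sigma\in S_1$; by Lemma \ref{v-s}$(i)$, $(0,\sigma)\subset S_1$, so $\sigma_*>0$.

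I expect the only genuinely fiddly point to be the verification that the barrier constants $M_*$ and $K$ can be chosen as above — in particular that $K$ is independent of $\sigma$ and that $M_*$ may be taken equal to the small sup-bound $\sigma\|\phi\|_\infty e^{aT}$ rather than $\|u_0\|_\infty$ — so that $h_\sigma'(t)\to0$ on the fixed window $[0,T]$ as $\sigma\to0^+$. Once this is in place, the remainder is a direct assembly of Remark \ref{suff-cond}, Theorem \ref{1-2-3}, Lemma \ref{vanishing-h}, and Lemma \ref{v-s}.
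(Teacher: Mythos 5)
Your proposal follows the paper's proof essentially line for line: the same supersolution $me^{-\alpha(t-T)}\cos(\pi x/(2\tilde h(t)))$ for $\sigma_*>0$, the same appeal to Lemma~\ref{v-s} for $\sigma^*\ge\sigma_*$, and the same use of Remark~\ref{suff-cond} (choosing $\sigma$ large so that $\sigma\phi\ge V_0$ on $[0,L(0)]$) for $\sigma^*<+\infty$. The only difference is that you explicitly justify the claim $\lim_{\sigma\to0^+}h_\sigma(T)=h_0$ and the smallness of $u_\sigma(T,\cdot)$ via the ODE bound $u_\sigma\le\sigma\|\phi\|_\infty e^{aT}$ and a $\sigma$-uniform quadratic barrier near the free boundary, a detail the paper dismisses as ``easy to show''; your verification there is correct.
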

\begin{proof}
For any fixed $T>0$, it is easy to show that 
\[
\lim_{\sigma\to 0^+}h_\sigma(T)=h_0,\; \lim_{\sigma\to
 0^+}\left[\max_{0\leq x\leq h_\sigma(T)}u_\sigma(T,x)\right]=0.
\]
Set $m:=\frac{d\pi}{9\mu}$ and fix $T>0$ such that $cT>l_0+3h_0$. We can then choose a sufficiently small $\sigma>0$ such that
\[
3 h_\sigma(T)<cT-l_0 \mbox{ and }
u_\sigma(T,x)\leq \frac{\sqrt{2}}{2}m\ \mbox{for}\ x\in[0,h_\sigma(T)].
\]
Set  
\[
\alpha:=d\frac{\pi^2}{36 h_\sigma(T)^2},
\]
\[
\tilde u_\sigma(t,x):=me^{-\alpha t}\cos\left(\frac{\pi}{2}\frac{x}{\tilde h_\sigma(t)}\right),\ \
\tilde h_\sigma(t):=h_\sigma(T)(3-e^{-\alpha t}).
\]
For $0<x\leq \tilde h_\sigma(t)$ and $t\geq T$, we have
\[
x-ct\leq \tilde h_\sigma(t)-ct\leq 3h_\sigma(T)-cT\leq -l_0.
\]
Therefore for such $t$ and $x$, $A(x-ct)=a_0$, and  the calculations in 
 the proof of Lemma \ref{vanishing-h} can be repeated to show, by the comparison principle,
\[
h_\sigma(T+t)\leq \tilde h_\sigma(t) \mbox{ for } t>0,
\]
which implies that $(u_\sigma, h_\sigma)$ is vanishing. This proves $\sigma_*>0$.

The fact $ \sigma_*\leq\sigma^*$ clearly follows from their definitions and Lemma \ref{v-s}.

Finally,
if  $h_0\geq L(0)$, then we can find $\sigma>0$ large enough such that
\[
\sigma\phi(x)\geq V_0(x) \mbox{ for } x\in [0, L(0)].
\]
Therefore by Remark \ref{suff-cond}, $(u_\sigma, h_\sigma)$ is spreading.
It follows that $\sigma^*<+\infty$.
\end{proof}

\begin{theo}\lbl{vanishing-spreading}\,
There exists $\sigma_0\in(0,\infty]$ such that
\begin{itemize}
\item[(i)]\, $(u_\sigma, h_\sigma)$ is vanishing when $\sigma<\sigma_0;$
\item[(ii)]\, $(u_\sigma, h_\sigma)$ is spreading when $\sigma>\sigma_0;$
\item[(iii)]\, $(u_\sigma, h_\sigma)$ is borderline spreading when $\sigma=\sigma_0$.
\end{itemize}
\end{theo}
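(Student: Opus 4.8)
The plan is as follows. Keep the notation $\sigma_*,\sigma^*,S_1,S_2$ from just before Lemma~\ref{sigma-stars} (so $0<\sigma_*\le\sigma^*\le+\infty$ by Lemma~\ref{sigma-stars}), and let $S_3:=(0,\infty)\setminus(S_1\cup S_2)$ be the set of $\sigma$ for which $(u_\sigma,h_\sigma)$ is borderline spreading; by Theorem~\ref{tri} (equivalently Theorem~\ref{three}) these three sets partition $(0,\infty)$, and by Lemma~\ref{v-s} $S_1$ is an interval $(0,\sigma_*)$ or $(0,\sigma_*]$ and $S_2$ is an interval $(\sigma^*,\infty)$ or $[\sigma^*,\infty)$. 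The theorem will follow from three facts: \textbf{(A)} $S_1$ is open in $(0,\infty)$; \textbf{(B)} $S_2$ is open in $(0,\infty)$; \textbf{(C)} $S_3$ contains at most one point. Indeed, (A) gives $S_1=(0,\sigma_*)$; if $\sigma_*=+\infty$ put $\sigma_0=+\infty$ and we are done, so assume $\sigma_*<\infty$. By (A) $\sigma_*\notin S_1$, and $\sigma_*\notin S_2$ (otherwise (B) would put some $\sigma<\sigma_*$ in $S_2$, but such $\sigma$ lies in $S_1$ by Lemma~\ref{v-s}(i), contradicting $S_1\cap S_2=\emptyset$); hence $\sigma_*\in S_3$, and (C) forces $S_3=\{\sigma_*\}$. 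Taking $\sigma_0:=\sigma_*$ we get vanishing for $\sigma<\sigma_0$, borderline spreading for $\sigma=\sigma_0$, and, since $\sigma>\sigma_0$ gives $\sigma\notin S_1\cup S_3$, spreading for $\sigma>\sigma_0$. Throughout we use that $\sigma\mapsto(u_\sigma,h_\sigma)$ depends continuously on $\sigma$ on each finite interval $[0,T]$, which follows from Theorems~\ref{local}--\ref{global} and uniqueness exactly as in \cite{DL}.

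For (A), let $\sigma_1\in S_1$. By Lemma~\ref{vanishing-h}, $h_{\sigma_1,\infty}<\infty$, $u_{\sigma_1}(t,\cdot)\to0$, and $h_{\sigma_1}'(t)\to0$ (cf. \eqref{h'}); hence, with $m:=d\pi/(9\mu)$, we may fix $T$ so large that $u_{\sigma_1}(T,x)\le m/(3\sqrt2)$ on $[0,h_{\sigma_1}(T)]$ and $3h_{\sigma_1}(T)<cT-l_0$. By continuous dependence over $[0,T]$, for $\sigma$ close to $\sigma_1$ we still have $u_\sigma(T,x)\le m/\sqrt2$ on $[0,h_\sigma(T)]$ and $3h_\sigma(T)<cT-l_0$; then the contracting supersolution $\tilde u(t,x)=me^{-\alpha t}\cos(\tfrac{\pi x}{2\tilde h(t)})$, $\tilde h(t)=h_\sigma(T)(3-e^{-\alpha t})$, $\alpha=d\pi^2/(36h_\sigma(T)^2)$, of the proof of Lemma~\ref{vanishing-h} (valid since $\tilde h(t-T)\le 3h_\sigma(T)<cT-l_0\le ct-l_0$, so $A(x-ct)=a_0$ on the relevant set) dominates $(u_\sigma,h_\sigma)$ from time $T$ on, giving $h_{\sigma,\infty}\le 3h_\sigma(T)<\infty$ and, by Lemma~\ref{vanishing-h}, $\sigma\in S_1$. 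Thus $S_1$ is open.

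For (B), let $\sigma_1\in S_2$. Then $(u_{\sigma_1},h_{\sigma_1})$ satisfies the conclusions of Theorem~\ref{tri}(ii): $h_{\sigma_1}(t)/t\to c_0$ and $u_{\sigma_1}(t,\cdot)\to a/b$ uniformly on $[(c+\epsilon)t,(1-\epsilon)h_{\sigma_1}(t)]$. Fix $\epsilon>0$ with $(1-\epsilon)^2c_0>c+\epsilon$ and $\epsilon_1>0$ with $\max_{[0,L(0)]}V_0\le a/b-\epsilon_1$. For large $T$, with $\xi_T:=(c+\epsilon)T$ we have $[\xi_T,\xi_T+L(0)]\subset[(c+\epsilon)T,(1-\epsilon)h_{\sigma_1}(T)]$, $\xi_T+L(0)<h_{\sigma_1}(T)$, and $u_{\sigma_1}(T,x)\ge V_0(x-\xi_T)+\epsilon_1/2$ on $[\xi_T,\xi_T+L(0)]$; by continuous dependence, for $\sigma$ close to $\sigma_1$, $u_\sigma(T,x)\ge V_0(x-\xi_T)$ on $[\xi_T,\xi_T+L(0)]$ and $h_\sigma(T)>\xi_T+L(0)$. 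Since $\xi_T>cT$, the moving bump $\underline u(s,x):=V_0(x-\xi_T-c(s-T))$ on $[\xi_T+c(s-T),\xi_T+c(s-T)+L(0)]$, with free boundary $\xi_T+c(s-T)+L(0)$, has $A(x-cs)\equiv a$ on its support and is a subsolution of \eqref{MP} for $s\ge T$ (as in the proof of Lemma~\ref{spreading-2}, using $-\mu V_0'(L(0))=c$); the free-boundary comparison principle (with $\xi_T+c(s-T)$ as moving left endpoint) yields $h_\sigma(s)-cs\ge \xi_T-cT+L(0)=\epsilon T+L(0)$ for all $s\ge T$. Taking $T$ with $\epsilon T+L(0)>L_*$, Theorem~\ref{three}(iii) gives $\sigma\in S_2$. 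Thus $S_2$ is open.

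The remaining step (C) is the heart of the matter and the main obstacle. Suppose $\sigma_1<\sigma_2$ are both borderline spreading. By Theorem~\ref{Transition}, $h_{\sigma_i}(t)-ct\to L_*$ and $u_{\sigma_i}(t,\cdot+h_{\sigma_i}(t))\to V_*(\cdot+L_*)$ in $C^1_{loc}((-\infty,0])$, so $\delta(t):=h_{\sigma_2}(t)-h_{\sigma_1}(t)\to0$, while $\delta(t)>0$ and $u_{\sigma_2}(t,\cdot)>u_{\sigma_1}(t,\cdot)$ on $[0,h_{\sigma_1}(t)]$ for $t>0$ by the strong maximum principle and Hopf's lemma (from $\sigma_1<\sigma_2$ in \eqref{u_0}). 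One must turn this strict ordering into a \emph{uniform} bound $\liminf_{t\to\infty}\delta(t)>0$, which contradicts $\delta(t)\to0$. Concretely, at $t=1$ one has $u_{\sigma_2}(1,\cdot)\ge u_{\sigma_1}(1,\cdot)+\rho_0$ on $[0,h_{\sigma_1}(1)]$ and $h_{\sigma_2}(1)\ge h_{\sigma_1}(1)+\eta_0$ for some $\rho_0,\eta_0>0$, and the aim is to build from $u_{\sigma_1}$ and this slack a genuine subsolution $(\underline u,\underline h)$ of \eqref{MP} on $t\ge1$ with $\underline u\le u_{\sigma_2}$ and $\underline h(t)\ge h_{\sigma_1}(t)+\epsilon_0$ for a fixed $\epsilon_0>0$, whence $h_{\sigma_2}(t)\ge h_{\sigma_1}(t)+\epsilon_0$. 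Alternatively, in the spirit of Section~3, one analyses $\eta(t,\cdot):=u_{\sigma_2}(t,\cdot+h_{\sigma_2}(t))-u_{\sigma_1}(t,\cdot+h_{\sigma_1}(t))$: it tends to $0$, and $x=0$ is (asymptotically as $t\to\infty$) a degenerate zero of it because $u_{\sigma_i,x}(t,h_{\sigma_i}(t))\to-c/\mu=V_*'(L_*)$, so an Angenent zero-number argument (as in the proof of Theorem~\ref{Transition}) should force $\eta\equiv0$, i.e. $\sigma_1=\sigma_2$. The genuine difficulty is precisely this rigidity of the borderline profile — that the excess mass of $u_{\sigma_2}$ over $u_{\sigma_1}$ cannot be accommodated while both free boundaries remain asymptotic to $ct+L_*$ — and, in the comparison route, the delicate point is the free boundary condition: the naive stretched/contracted ansätze for $(\underline u,\underline h)$ fail $\underline h'(t)\le-\mu\underline u_x(t,\underline h(t))$ at the front, so the subsolution must be engineered more carefully.
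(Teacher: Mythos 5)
Your decomposition into (A), (B), (C) is sound and (A), (B) are correct. Step (A) matches the paper's Step~1 essentially verbatim. Step (B) is a legitimate variant: the paper instead argues via the zero-number lemma (as in Lemma~\ref{prop-infty}) applied to $(u_{\sigma^*-\epsilon},h_{\sigma^*-\epsilon})$ between the times its shifted boundary $h-ct$ passes through $\tilde l$ and $\tilde l+L(0)$, whereas you invoke the spreading asymptotics of Theorem~\ref{tri}(ii) plus continuous dependence to verify the sufficient condition of Lemma~\ref{spreading-2} for a nearby $\sigma$. Both routes work, and yours is arguably a bit more direct (one small nit: the bonus ``$\epsilon T$'' is unnecessary since $L(0)>L_*$ already by Remark~\ref{l}).

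Step (C), however, has a genuine gap. You correctly identify the obstruction — that a naive stretched or multiplied ansatz breaks the Stefan condition — and you sketch two possible routes (a comparison construction and an Angenent-style rigidity argument) without closing either. The zero-number route does not transfer as stated: in the proof of Theorem~\ref{Transition} the degeneracy $\hat w_x(t,0)=V_*'(L_*)$ holds \emph{exactly} for the limiting entire solution, whereas for $\eta(t,\cdot)=u_{\sigma_2}(t,\cdot+h_{\sigma_2}(t))-u_{\sigma_1}(t,\cdot+h_{\sigma_1}(t))$ the zero at the front is only \emph{asymptotically} degenerate, so the argument does not literally apply. The comparison route is the right one, and the missing idea is the spatial translate. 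Since $A$ is monotone nondecreasing, for $\tau_0>0$ the function $\underline u(t,x):=u_*(t+t_0,x-\tau_0)$ satisfies
\[
\underline u_t-d\underline u_{xx}=A(x-\tau_0-c(t+t_0))\underline u-b\underline u^2\le A(x-c(t+t_0))\underline u-b\underline u^2,
\]
so it is a PDE-subsolution; and crucially its front $\underline h(t):=h_*(t+t_0)+\tau_0$ satisfies the free boundary condition with \emph{equality},
\[
\underline h'(t)=h_*'(t+t_0)=-\mu(u_*)_x\big(t+t_0,h_*(t+t_0)\big)=-\mu\underline u_x\big(t,\underline h(t)\big),
\]
so no ``$\le$'' at the front needs to be engineered at all. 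To make this a subsolution comparison with $(u^*,h^*)$ on a moving strip one restricts to $\xi_1(t):=c(t+t_0)-2l_0+\tau_0\le x\le\underline h(t)$ and uses two facts: the strict order $u_*<u^*$ from the comparison principle, and the convergence \eqref{w-V_*} together with $V_*'>0$ on $(-\infty,-l_0]$ (Lemma~\ref{lem3}) to guarantee $u^*_x(t,x)>0$ for large $t$ and $x-ct$ near $-2l_0$, hence $\underline u(t,\xi_1(t))=u_*(t+t_0,\xi_1(t)-\tau_0)<u^*(t+t_0,\xi_1(t)-\tau_0)<u^*(t+t_0,\xi_1(t))$. The free-boundary comparison then yields $h^*(t+t_0)\ge h_*(t+t_0)+\tau_0$ for all $t>0$, whence $\liminf_{t\to\infty}[h^*(t)-ct]\ge L_*+\tau_0>L_*$, contradicting Theorem~\ref{Transition}. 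Without this spatial-shift device (or an equivalent one), your Step (C) is not a proof.
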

\begin{proof}\, For clarity, we divide the proof into three steps.

\noindent
{\bf Step 1.} \; We show that $\sigma_*\not\in S_1$.

Arguing indirectly, we suppose that $\sigma_*\in S_1$. Then  we have
\[\mbox{
$\lim_{t\rightarrow\infty}\left[\max_{0\leq x\leq h_{\sigma_*}(t)}u_{\sigma_*}(t,x)\right]=0$ and $(h_{\sigma_*})_\infty<\infty$.}
\]
Denote $m=\frac{d\pi}{9\mu}$. Then
there exists $T>0$ such that
$$
3h_{\sigma_*}(T)<cT-(l_0+1)\ \mbox{and}\ \max_{0\leq x\leq h_{\sigma_*}(T)}u_{\sigma_*}(T,x)\leq \sqrt{2}m/4.
$$
By continuity of solutions with respect on $\sigma$, we may choose a small $\epsilon>0$ such that
the corresponding solution $(u_{\sigma_*+\epsilon},h_{\sigma_*+\epsilon})$ satisfies
\[
3h_{\sigma_*+\epsilon}(T)<cT-l_0\ \mbox{and}\ \max_{0\leq x\leq h_{\sigma_*+\epsilon}(T)}u_{\sigma_*+\epsilon}(T,x)\leq\sqrt{2}m/2.
\]
Let 
\[
\alpha:=\frac{d\pi^2}{36h_{\sigma_*+\epsilon}(T)^2}
\]
and define
\[
\xi(t):=h_{\sigma_*+\epsilon}(T)(3-e^{-\alpha t}), \ \omega(t,x):=me^{-\alpha t}\cos\left(\frac{\pi}{2}\frac{x}{\xi(t)}\right).
\]
Then by the arguments in the proof of Lemma \ref{vanishing-h}, we have
\[
h_{\sigma_*+\epsilon}(T+t)\leq \xi(t) \ \mbox{for}\ t>0.
\]
This implies $\lim_{t\to\infty}h_{\sigma_*+\epsilon}(t)<\infty$,
and hence  $(u_{\sigma_*+\epsilon},h_{\sigma_*+\epsilon})$ is vanishing, which is a
contradiction to the definition of $\sigma_*$. Therefore, $\sigma_*\not\in S_1$, and Step 1 is completed.

Let us note that by Lemma \ref{v-s},  vanishing happens when $0<\sigma<\sigma_*$. If $\sigma_*=+\infty$, then there is nothing left to prove. We suppose next $\sigma_*<+\infty$.
\smallskip

\noindent
{\bf Step 2.} \ We show that $\sigma^*\not\in S_2$.

Suppose that $\sigma^*\in S_2$. Then, by the definition of spreading, we have $h_{\sigma^*}(t)-ct\rightarrow\infty$ as $t\rightarrow\infty$.
Fix a constant $\tilde l>\max\{h_0, L_*\}$; then there exists $t_0>0$ such that
\[
h_{\sigma^*}(t_0)-ct_0>L(0)+\tilde l+1.
\]
By continuity, there exists a sufficiently small $\epsilon>0$ such that
\bes
\lbl{t-0}
h_{\sigma^*-\epsilon}(t_0)-ct_0>L(0)+\tilde l.
\ees
Define
\[
\mbox{$
w(t,x):=V_{0}(x-ct-\tilde l)$ for $t>0$ and $ct+\tilde l\leq x\leq ct+\tilde l+L(0)$.}
\]
In view of \eqref{t-0} and the fact that 
\[
h_{\sigma^*-\epsilon}(0)=h_0<\tilde l,
\]
we can find $t_1$ and $t_2$ such that $0<t_2<t_1<t_0$,
\[
h_{\sigma^*-\epsilon}(t_2)-ct_2=\tilde l,\; h_{\sigma^*-\epsilon}(t_1)-ct_1=\tilde l+L(0),
\]
and
\[
\tilde l<h_{\sigma^*-\epsilon}(t)-ct< \tilde l+L(0) \mbox{ for } t\in (t_2, t_1).
\]
We are now in a position to repeat the zero number argument
in the proof of Lemma \ref{prop-infty}, to deduce that
$$
V_0(x-ct_1-\tilde l)=w(t_1,x)<u_{\sigma^*-\epsilon}(t_1,x) \ \mbox{for}\ ct_1+\tilde l\leq x<h_{\sigma^*-\epsilon}(t_1),
$$
and
$$
h_{\sigma^*-\epsilon}(t_1)=ct_1+\tilde l+L(0).
$$
By the comparison principle for free boundary problems, we then deduce
\[
h_{\sigma^*-\epsilon}(t)\geq ct+\tilde l+L(0) \mbox{ for } t>t_1,
\]
and 
\[
u_{\sigma^*-\epsilon}(t,x)\geq V_0(x-ct-\tilde l) \mbox{ for } t>t_1,\; x\in [ct+\tilde l, ct+\tilde l+L(0)].
\]
It follows that 
\[
\limsup_{t\to\infty}[ h_{\sigma^*-\epsilon}(t)-ct]\geq \tilde l+L(0)>L_*.
\]
Hence we can use Theorem \ref{1-2-3} to conclude that $(u_{\sigma^*-\epsilon}, h_{\sigma^*-\epsilon})$ is spreading,
 which is a contradiction to the
definition of $\sigma^*$. Therefore, $\sigma^*\not\in S_2$, and Step 2 is done.

By Lemma \ref{v-s}, $(u_\sigma, h_\sigma)$ is
spreading  when $\sigma>\sigma^*$. Moreover, for any $\sigma\in [\sigma_*, \sigma^*]\cap \bR^1$, $(u_\sigma, h_\sigma)$ is
not vanishing, nor spreading, so by Theorem \ref{tri}, $(u_\sigma, h_\sigma)$ must be borderline spreading.

\smallskip

\noindent
{\bf Step 3.}
\ We prove that $\sigma_*=\sigma^*$.

 Suppose that $\sigma_*<\sigma^*$.  For convenience, we denote
\[
(u_*,h_*)=(u_{\sigma_*},h_{\sigma_*}) \mbox{ and } (u^*,h^*)=(u_{\sigma^*},h_{\sigma^*}). 
\]
(If $S_2=\emptyset$ and hence $\sigma^*=+\infty$, then  we take $\sigma^*$  an arbitrary number in $(\sigma_*, +\infty)$
in the definition of $(u^*, h^*)$ above.)
Then  both $(u_{*},h_{*})$ and $(u^*,h^*)$ are borderline spreading, and so
\[
\lim_{t\rightarrow\infty}[h_*(t)-ct]=\lim_{t\rightarrow\infty}[h^*(t)-ct]=L_{*}.
\]
By the comparison principle,
\bes\lbl{u_*-u^*}
u_*(t,x)<u^*(t,x)\ \ \mbox{for}\ t>0,\ 0\leq x\leq h_*(t)
\ees
and
\bes\lbl{h_*-h^*}
h_*(t)<h^*(t)\ \ \mbox{for}\ t>0.
\ees
From the proof of Lemma \ref{lem3}, we see that $V_{*}'(x)>0$ for $x<-l_0$. Hence we can use \eqref{w-V_*} to conclude that
for all large $t$, say $t\geq t_0>0$, 
\bes\lbl{u^*_x}
u^*_x(t,x)>0\ \ \mbox{for}\ t\geq t_0, \ -2l_0\leq x-ct\leq -\frac{3}{2}l_0.
\ees
We may also assume that $ct_0>2l_0$.

By (\ref{u_*-u^*}) and (\ref{h_*-h^*}), there is small $\tau_0>0$ such that
\[
u_*(t_0,x-\tau_0)<u^*(t_0,x)\ \ \mbox{for}\ ct_0-2l_0+\tau_0\leq x\leq h_*(t_0)+\tau_0
\]
and
\[
h_*(t_0)+\tau_0<h^*(t_0).
\]
Define
\[
\underline u(t,x):=u_*(t+t_0,x-\tau_0),
\]
\[
\xi_1(t):=c(t+t_0)-2l_0+\tau_0,\ \xi_2(t):=h_*(t+t_0)+\tau_0.
\]
Then,
\bess
\underline u_t&=&d\underline u_{xx}+A(x-\tau_0-c(t+t_0))\underline u-b\underline u^2\\
&\leq&d\underline u_{xx}+A(x-c(t+t_0))\underline u-b\underline u^2;
\eess
\[
\underline u(0,x)=u_*(t_0,x-\tau_0)<u^*(t_0,x),\ \ x\in[\xi_1(0),\xi_2(0)];
\]
\[
-\mu\underline u_x(t,\xi_2(t))=-\mu\frac{\partial u_*}{\partial x}(t+t_0,h_*(t+t_0))=h_*'(t+t_0)=\xi_2'(t);
\]
\[
\underline u(t,\xi_2(t))=0,\ \ t>0.
\]
By (\ref{u_*-u^*}) and (\ref{u^*_x}), for $t>0$,
\[
\underline u(t,\xi_1(t))=u_*(t+t_0,c(t_0+t)-2l_0)<u^*(t+t_0,c(t+t_0)-2l_0)<u^*(t+t_0,\xi_1(t)).
\]
By the comparison principle,
\[
\underline u(t,x)\leq u^*(t+t_0,x)\ \ \mbox{for}\ t>0,\ x\in[\xi_1(t),\xi_2(t)]
\]
and
\bes\lbl{h_*<<h^*}
\xi_2(t)\leq h^*(t+t_0)\ \ \mbox{for}\ t>0.
\ees
From (\ref{h_*<<h^*}) and $\lim_{t\rightarrow\infty}[h_*(t)-ct]=L_{*}$, we  obtain
\[
\liminf_{t\rightarrow\infty}[h^*(t)-ct]\geq \lim_{t\to\infty}[\xi_2(t-t_0)-ct]= L_{*}+\tau_0>L_*,
\]
which contradicts $\lim_{t\rightarrow\infty}[h^*(t)-ct]=L_{*}$.
This completes Step 3, and hence  the proof of the theorem.
\end{proof}

\section{Proof of Theorem \ref{c>c_0}}
\setcounter{equation}{0}

 We will consider the cases $c>c_0$ and $c=c_0$ separately. We start with the easy case $c>c_0$.

\begin{lem}
If $c>c_0$, then the unique solution $(u,h)$ of \eqref{MP} is always vanishing.
\end{lem}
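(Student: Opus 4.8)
The plan is to reduce the statement to the two general-$c$ results already established, Lemma \ref{0-ct-M} and Lemma \ref{vanishing-h}, by first showing that the free boundary $x=h(t)$ falls infinitely far behind the front $x=ct$ of the unfavourable region.

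First I would compare $(u,h)$ with the solution $(\bar u,\bar h)$ of the free boundary problem \eqref{MP} in which $A(x-ct)$ is replaced by the constant $a$, with the same initial data $h_0,u_0$. Since $A(\xi)\le a$ for all $\xi$, the comparison principle for free boundary problems (see \cite{DL}) gives $u\le \bar u$ and $h\le \bar h$; together with the spreading speed estimate for the homogeneous problem (this is precisely the first line of the proof of Lemma \ref{speed}), this yields $\limsup_{t\to\infty} h(t)/t\le c_0$.

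Next, since $c>c_0$, for any fixed $M>0$ we have, for all large $t$, $h(t)\le \frac{c_0+c}{2}\,t = ct-\frac{c-c_0}{2}\,t < ct-M$, because $\frac{c-c_0}{2}\,t-M\to\infty$. Hence $\{x:0\le x\le h(t)\}\subset\{x:0\le x\le ct-M\}$ for all large $t$, so that $\max_{0\le x\le h(t)}u(t,x)\le \sup_{0\le x\le ct-M}u(t,x)$, and therefore $\limsup_{t\to\infty}\big[\max_{0\le x\le h(t)}u(t,x)\big]\le \epsilon(M)$, with $\epsilon(M)$ as defined in Lemma \ref{0-ct-M}. Letting $M\to\infty$ and invoking Lemma \ref{0-ct-M} gives $\lim_{t\to\infty}\big[\max_{0\le x\le h(t)}u(t,x)\big]=0$. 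Finally, Lemma \ref{vanishing-h} converts this into $h_\infty:=\lim_{t\to\infty}h(t)<\infty$, which together with the preceding limit is exactly the assertion that $(u,h)$ is vanishing.

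There is no serious obstacle here. The only point requiring a little care is that Lemmas \ref{0-ct-M} and \ref{vanishing-h} must be applied without the standing hypothesis $0<c<c_0$ of Section 3, and indeed both are stated and proved for arbitrary $c>0$. The substantive input — the bound $\limsup_{t\to\infty} h(t)/t\le c_0$ — is inherited directly from the homogeneous theory of \cite{DL}, so the argument is genuinely short.
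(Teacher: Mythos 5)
Your proof is correct and follows the same strategy as the paper: bound the spreading speed by comparison with the homogeneous favourable problem so that $h(t)$ falls infinitely far behind $ct$, then apply Lemma \ref{0-ct-M} and Lemma \ref{vanishing-h} (both of which, as you rightly note, hold for general $c>0$). The only cosmetic difference is that the paper invokes \cite{DMZ} to assert $h(t)<(c-\delta)t$ for large $t$, whereas you derive the sufficient bound $\limsup_{t\to\infty}h(t)/t\le c_0<c$ directly from the comparison with the $A\equiv a$ problem as in \cite{DL}; either way the conclusion is the same.
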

\begin{proof}
Since $A(x-ct)\leq a$, by the comparison principle and \cite{DMZ}, there are $t_0>0$ and $\delta>0$ such that
$h(t)<(c-\delta)t$ for $t>t_0$. By Lemma \ref{0-ct-M}, we have
\[
\lim_{t\rightarrow \infty}\left[\max_{0\leq x\leq h(t)}u(t,x)\right]=0.
\]
We may now apply Lemma \ref{vanishing-h} to obtain $h_\infty<+\infty$. Hence $(u,h)$ is vanishing.
\end{proof}

We next treat the case $c=c_0$.

\begin{lem}\lbl{c=k}\,
When $c=c_0$,  the unique solution $(u,h)$ of \eqref{MP} is always vanishing.
\end{lem}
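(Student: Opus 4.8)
The plan is to reduce the case $c=c_0$ to three tools already at hand: the comparison with the homogeneous problem, and Lemmas~\ref{vanishing-h} and \ref{0-ct-M}. First I would compare $(u,h)$ with the solution $(\bar u,\bar h)$ of \eqref{MP} in which $A(x-ct)$ is replaced by the constant $a$ and the initial data are kept. Since $A(x-ct)\le a$, the comparison principle for free boundary problems (\cite{DL}) gives $h(t)\le \bar h(t)$ and $u\le \bar u$ for all $t>0$. If $(\bar u,\bar h)$ vanishes then $\bar h_\infty<\infty$, hence $h_\infty<\infty$, and Lemma~\ref{vanishing-h} finishes the proof. Otherwise $(\bar u,\bar h)$ spreads and, by \cite{BDK, DMZ}, $\bar h(t)-c_0t$ converges to a finite constant; thus there is $C_1>0$ with $h(t)\le \bar h(t)\le c_0t+C_1=ct+C_1$ for all $t>0$, so in particular $H^*:=\limsup_{t\to\infty}[h(t)-ct]<\infty$.

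If $H^*=-\infty$, then $h(t)-ct\to-\infty$, so for every $M>0$ we have $[0,h(t)]\subset[0,ct-M]$ for all large $t$; Lemma~\ref{0-ct-M} then gives $\max_{0\le x\le h(t)}u(t,x)\to0$, and Lemma~\ref{vanishing-h} gives $h_\infty<\infty$, so $(u,h)$ is vanishing. Hence it remains to rule out the case $H^*\in(-\infty,\infty)$. Assume $-\infty<H^*<\infty$; since then $h(t)-ct\not\to-\infty$, Lemma~\ref{vanishing-h} forces $h_\infty=\infty$, and I shall derive a contradiction. Arguing as in Lemma~\ref{prop3}, but now comparing $u$ with the homogeneous semi-wave $q_{c_0}$ of \eqref{semi-wave} (which travels at the exact speed $c_0=c$) via the zero number result of \cite{Ang}, one upgrades the $\limsup$ to a genuine limit: $\lim_{t\to\infty}[h(t)-ct]=H^*$. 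Then, exactly as in the proof of Lemma~\ref{H^*>L_c}, I pass to the limit along a sequence $t_n\to\infty$ of the functions $w_n(t,y):=u(t+t_n,\,y+h(t+t_n))$; since $h_\infty=\infty$ the limiting domain is $(-\infty,0]$, and since $h(t)-ct\to H^*$ the front-speed correction $g_n'$ tends to $0$, so one obtains a bounded entire solution $\hat w$ on $\mathbb{R}\times(-\infty,0]$ with $\hat w\ge0$, $\hat w(t,0)=0$, $-\mu\hat w_y(t,0)=c$, and
\[
\hat w_t=d\hat w_{yy}+c\,\hat w_y+A(y+H^*)\hat w-b\hat w^2\qquad\text{for }t\in\mathbb{R},\ y<0 .
\]
By the strong maximum principle $\hat w>0$ for $y<0$, and by \eqref{eq4.8} we have $\hat w\le a/b$.

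The contradiction is then reached using the homogeneous semi-wave. Put $Q(y):=q_{c_0}(-y)$; then $Q$ solves $dQ''+c_0Q'+aQ-bQ^2=0$ on $(-\infty,0)$ with $Q(0)=0$, $Q'(0)=-q_{c_0}'(0)=-c/\mu$, $0<Q<a/b$ and $Q(-\infty)=a/b$. Since $A\le a$, $Q$ is a stationary supersolution of the equation satisfied by $\hat w$. Because $\hat w\le a/b=Q(-\infty)$ we have $\limsup_{y\to-\infty}[\hat w(t,y)-Q(y)]\le0$, while $\hat w(t,0)=0=Q(0)$; hence a comparison argument on the half-line $(-\infty,0]$, of the type used in the proof of the comparison principle in \cite{DM} (cf.\ Remark~\ref{M} and Lemma~\ref{lem4}), yields $\hat w(t,y)\le Q(y)$ for all $t\in\mathbb{R}$ and $y\le0$. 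If $\hat w\not\equiv Q$, then $\hat w<Q$ in $\mathbb{R}\times(-\infty,0)$ and the Hopf boundary lemma at $y=0$ gives $\hat w_y(t,0)>Q'(0)=-c/\mu$, contradicting $-\mu\hat w_y(t,0)=c$. Hence $\hat w\equiv Q$; subtracting the two equations then gives $(a-A(y+H^*))Q(y)=0$ for all $y<0$, so $A(y+H^*)=a$ for all $y<0$, which is impossible by \eqref{a} since $H^*$ is finite. This rules out $H^*\in(-\infty,\infty)$ and completes the proof.

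The step I expect to be the main obstacle is the critical-speed analysis carried out in the third paragraph: upgrading $\limsup_{t\to\infty}[h(t)-ct]$ to a true limit and extracting the entire profile $\hat w$ with the correct speed-$c_0$ structure. For $c<c_0$ this was done with the help of the finite semi-wave $(L_*,V_*)$, which no longer exists at the critical speed $c=c_0$; its role must be played instead by the homogeneous semi-wave $q_{c_0}$, whose wave speed is precisely $c_0=c$, together with the zero number argument of Lemma~\ref{prop3} and the limiting procedure of Lemma~\ref{H^*>L_c}.
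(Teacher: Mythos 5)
Your proof takes a genuinely different route from the paper's. The paper proceeds by directly building a travelling supersolution: using Lemma~\ref{0-ct-M} and an auxiliary one-sided boundary-value problem it first shows that $u(t,x)\le a/b-\epsilon_2$ for all large $t$ and all $x\in[0,ct+L]$, then places above $(u,h)$ a shifted copy of the critical semi-wave $q_{c_0}$ (the profile $U_c$ of \eqref{U-c-L}) travelling at the slightly slower speed $c-\delta$; comparison then gives $h(t)-ct\to-\infty$, and vanishing follows from Lemmas~\ref{0-ct-M} and~\ref{vanishing-h}. No limiting entire solution is constructed.

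The third paragraph of your argument contains a genuine gap, and it is exactly the step you flag as the main obstacle. To extract the entire solution $\hat w$ with the clean boundary condition $-\mu\,\hat w_y(t,0)=c$ you need the full limit $\lim_{t\to\infty}[h(t)-ct]=H^*$ (not merely a finite limsup); it is this that forces $\int_0^t\xi(s)\,ds=0$ for all $t$ and hence $\xi\equiv 0$ in the argument of Lemma~\ref{H^*>L_c}. You propose to upgrade the limsup to a limit by replicating the zero-number argument of Lemma~\ref{prop3} with $q_{c_0}$ in place of $V_{l_*}$. This replacement fails for a structural reason. In Lemma~\ref{prop3} the function $\eta(t,x):=u(t,x+ct)-V_{l_*}(x)$ satisfies a \emph{homogeneous} linear parabolic equation because $V_{l_*}$ solves the same spatially inhomogeneous equation $dV''+cV'+A(x)V-bV^2=0$ that governs the steady states of $v:=u(t,\cdot+ct)$; this is precisely what makes Angenent's zero-number theorem applicable. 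If you replace $V_{l_*}$ by a shifted $q_{c_0}$, which solves the constant-coefficient equation $dq''-c_0q'+aq-bq^2=0$, then the difference picks up the nonhomogeneous source term $(A(x)-a)q_{c_0}(\cdot)$, which is nonzero wherever $A(x)<a$, and Angenent's theorem no longer applies. Moreover there is no substitute stationary profile at the critical speed: the family $(l,L(l),V_l)$ and its limit $(L_*,V_*)$ in Subsection~2.2 are built on the phase-plane analysis of \cite{GLZ}, valid only for $0<c<c_0$. Hence the key upgrade step is not available by the route you describe, and the subsequent classification $\hat w\equiv Q$ rests on an object you have not actually produced; the direct supersolution argument in the paper is both simpler and tailor-made for the critical speed.
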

\begin{proof}\,
 We understand that $u(t,x)=0$ for $x>h(t)$.
By the comparison principle and \cite{DMZ}, there exists $L>0$ such that
\[
h(t)-ct<L\ \mbox{for}\ t\geq 0.
\]
Denote $v(t,x):=u(t,x+ct)$; then
\bess\left\{\begin{array}{ll}\medskip
\displaystyle v_t=dv_{xx}+cv_x+A(x)v-bv^2, \ \ & t>0,\ -ct<x<h(t)-ct,\\
\displaystyle v(t,h(t)-ct)=0,\ \ & t>0.
\end{array}\right.
\eess
By Lemma \ref{0-ct-M}, there exist $t_0>0$ and $M_0>0$ such that
\bes\lbl{-ct-M}
v(t,x)<\frac{a}{2b} \mbox{ for } t\geq t_0,\; x\in [-ct, -M_0].
\ees

 Let $M=\max\{\|u_0\|_\infty,a/b\}$. Then consider the following problem
\bes\lbl{v-1-equation}\left\{\begin{array}{ll}\medskip
\displaystyle w_t=dw_{xx}+cw_x+A(x)w-bw^2, \ \ & t>t_0,\ -M_0<x<L,\\
\medskip\displaystyle w(t,-M_0)=\frac{a}{2b},\ \ w(t,L)=0,\ \ & t>t_0,\\
\displaystyle w(t_0,x)=M,\ \ &     -M_0\leq x\leq L.
\end{array}\right.
\ees
Since $w=M$ is a super-solution of the corresponding elliptic problem of \eqref{v-1-equation},
by a well-known result on parabolic equations (see \cite{S}), 
the unique solution of (\ref{v-1-equation}), which we denote by $v^1(t,x)$, is decreasing in $t$ and 
\[
\lim_{t\to\infty} v^1(t,\cdot)=V^1 \mbox{ in } C^2([-M_0, L]),
\]
with $V^1$  the unique positive solution of 
\bess\left\{\begin{array}{ll}\medskip
\displaystyle dV''+cV'+A(x)V-bV^2=0, \ \  -M_0<x<L,\\
\displaystyle V(-M_0)=\frac{a}{2b},\ \ V(L)=0.
\end{array}\right.
\eess
Since $h(t)-ct<L$ for all $t$, and $M\geq u_0$, and \eqref{-ct-M} holds, by the comparison principle, we have
\[
v(t,x)\leq v^1(t,x)\ \ \mbox{for}\ \ t\geq t_0,\ -M_0<x<L.
\]
By a simple comparison argument and the strong maximum principle, we see that there exists $\epsilon_1>0$ such that
\[
V^1(x)\leq \frac{a}{b}-2\epsilon_1\ \ \mbox{for}\ x\in[-M_0,L].
\]
So, there is $t_1>t_0$ such that
\bess
v(t,x)<\frac{a}{b}-\epsilon_1\ \ \mbox{for}\ t\geq t_1, \ -M_0\leq x\leq L.
\eess
Combining this with \eqref{-ct-M}, we obtain, for some $\epsilon_2>0$,
\bes\lbl{right-estimate}
u(t,x)<\frac{a}{b}-\epsilon_2\ \ \mbox{for}\ t\geq t_1, \ 0 \leq x\leq ct+L.
\ees

Since $c=c_0$, by \cite{DL, BDK}, the problem
\bes\lbl{U-c-L}\left\{\begin{array}{ll}\medskip
\displaystyle dU''+cU'+aU-bU^2=0, \ \ -\infty<x<0,\\
\medskip\displaystyle U(0)=0,\; U(-\infty)=\frac{a}{b},\\
\displaystyle-\mu U'(0)=c
\end{array}\right.
\ees
has a unique positive solution $U_{c}$. Define 
\[
U_{c, L_1}(x)=U_c(x-L_1).
\]
If we choose $L_1>L$ large enough, then from $U_c(-\infty)=a/b$ we obtain
\[
U_{c,L_1}(x)>\frac{a}{b}-\frac{\epsilon_2}{4}\ \ \mbox{for}\ x\in(-\infty,L].
\]
By continuity, there is sufficiently small $\delta>0$ such that 
\[
U_{c-\delta,L_1}(x)>\frac{a}{b}-\frac{\epsilon_2}{2}\ \ \mbox{for}\  x\in[0, L],
\] 
where
$U_{c-\delta,L_1}(x):=U_{c-\delta}(x-L_1)$, and $U_{c-\delta}$ is the unique solution of the initial value problem
\[
dU''+(c-\delta)U'+aU-bU^2=0,\; U(0)=0,\; U'(0)=-\frac{c-\delta}{\mu}.
\]

Define
\[
\tilde u(t,x):=U_{c-\delta,L_1}(x-ct_1-(c-\delta)t),
\]
\[
\xi_1(t):=ct_1+(c-\delta)t,\ \xi_2(t):=ct_1+(c-\delta)t+L_1.
\]
Then, for $ t>0$ and $ \xi_1(t)<x<\xi_2(t)$,
\bess
\tilde u_t&=&d\tilde u_{xx}+a\tilde u-b\tilde u^2\\
&\geq& d\tilde u_{xx}+A(x-c(t+t_1))\tilde u-b\tilde u^2.
\eess
By  (\ref{right-estimate}),
\[
\tilde u(t,\xi_1(t))=U_{c-\delta,L_1}(0)>\frac{a}{b}-\frac{\epsilon_2}{2}>u(t,\xi_1(t)) \mbox{ for } \ t>0.
\]
Obviously,
\[
\tilde u(t,\xi_2(t))=U_{c-\delta,L_1}(L_1)=0;
\]
\[
-\mu\tilde u_x(t,\xi_2(t))=c-\delta=\xi_2'(t) \mbox{ for }\ t>0.
\]
If $[\xi_1(0),h(t_1)]$ is not empty, then by (\ref{right-estimate})
\[
\tilde u(0,x)\geq \frac{a}{b}-\epsilon_2>u(t_1,x)\ \mbox{for}\ x\in[\xi_1(0),h(t_1)].
\]
Hence we can use  the comparison principle to conclude that
\[
\tilde u(t,x)\geq u(t+t_1,x)\ \mbox{for}\ t>0,\ \xi_1(t)<x<h(t_1+t)
\]
and
\[
\xi_2(t)\geq h(t+t_1)\ \mbox{for}\ t>0.
\]
We may now use Lemma \ref{0-ct-M} to conclude that
\[
\lim_{t\rightarrow\infty}\left[\max_{x\in[0,h(t)]}u(t,x)\right]=0.
\]
Therefore, $h_\infty<+\infty$ (see Lemma \ref{vanishing-h}), and vanishing happens.
\end{proof}

\end{document}